\renewcommand{\k}{\mathbb{k}}
\theoremstyle{definition}
\newtheorem{theorem}{Theorem}[section]
\newtheorem{theoremx}{Theorem}
\numberwithin{equation}{section}
\newtheorem{question}[theorem]{Question}
\newtheorem{corollary}[theorem]{Corollary}
\newtheorem{proposition}[theorem]{Proposition}
\theoremstyle{definition}
\newtheorem{definition}[theorem]{Definition}
\newtheorem{example}[theorem]{Example}
\newtheorem{remark}[theorem]{Remark}
\newtheoremstyle{TheoremNum}
{8pt}{8pt}              
{\upshape}                      
{}                              
{\bfseries}                     
{.}                             
{.5em}                             
{\theoremname{#1}\theoremnote{ \bfseries #3}}
\theoremstyle{TheoremNum}
\newcommand{\m}{\mathfrak{m}}
\newcommand{\n}{\mathfrak{n}}
\newcommand{\cC}{\mathfrak{C}}
\newcommand{\edim}{\operatorname{edim}}
\newcommand{\Rank}{\operatorname{rank}}
\newcommand{\type}{\operatorname{type}}
\newcommand{\Hom}{\operatorname{Hom}}
\newcommand{\End}{\operatorname{End}}
\newcommand{\Ext}{\operatorname{Ext}}
\newcommand{\C}{\mathfrak{C}}
\newcommand{\Frac}{\operatorname{Frac}}
\newcommand{\quot}{\operatorname{Quot}}
\newcommand{\depth}{\operatorname{depth}}
\DeclareMathOperator{\PF}{PF}
\renewcommand{\leq}{\leqslant}
\renewcommand{\geq}{\geqslant}
\newcommand{\ds}{\displaystyle}
\newcommand{\ps}[1]{\llbracket {#1} \rrbracket}
\def\cm{\operatorname{CM}}
\def\rf{\operatorname{Ref}}
\title{Extremal behavior of reduced type of one dimensional rings}
\address{Department of Mathematics, University of Utah, Salt Lake City, UT, USA}
\author[Maitra]{Sarasij Maitra}
\email{maitra@math.utah.edu}
\address{Department of Mathematics, Indian Institute of Technology Delhi, Hauz Khas, India.}
\author[Mukundan]{Vivek Mukundan}
\email{vmukunda@iitd.ac.in}
\subjclass[2010]{Primary: 13A15. Secondary: 13H05, 13Cxx, 13Gxx, 16W60}
\keywords{reduced type, one dimensional complete local domains, maximal reduced type, minimal reduced type, finite CM type, finite ref type}
\begin{document}

	\begin{abstract}
Let $R$ be a domain that is a complete local $\k$ algebra in dimension one. In an effort to address the Berger's conjecture, a crucial invariant reduced type $s(R)$ was introduced in \cite{huneke2021torsion}. In this article, we study this invariant and its max/min values separately and relate it to the valuation semigroup of $R$. We justify the need to study $s(R)$ in the context of numerical semigroup rings and consequently investigate the occurrence of the extreme values of $s(R)$ for the Gorenstein, almost Gorenstein, and far-flung Gorenstein complete numerical semigroup rings.  Finally, we  study the finiteness of the category $\cm(R)$ of maximal Cohen Macaulay modules and the category $\rf(R)$ of reflexive modules for rings which are of maximal/minimal reduced type and provide many classifications.
	\end{abstract}
	\maketitle
\section{Introduction}
Let $(R,\mathfrak{m},\k)$ be a non-regular one dimensional complete local domain which is a $\k$-algebra. We assume that $\k$ is algebraically closed of characteristic $0$. Assume $\k\ps{\alpha_1t^{a_1},\dots\alpha_nt^{a_n}}\cong R=\frac{\k\ps{X_1,\dots, X_n}}{I}$, $I\subseteq (X_1,\dots,X_n)^2$. Here $\alpha_i$ are units of $\overline{R}$. Additionally, we also arrange the $x_i$ such that $a_1<a_2<\dots<a_n$. The \textit{reduced type} of $R$ is
\begin{align*}
    s(R):=\dim_\k\frac{\cC+x_1R}{x_1R},
\end{align*}
where $\C$ is the conductor ideal of the ring $R$. Recall that the conductor $\C=R:_{\quot(R)}\overline{R}$ is the largest common ideal between $R$ and $\overline{R}$. This invariant was defined in \cite{huneke2021torsion} as a means of constructing an over ring $S$ of $R$. The authors also demonstrated that constructing sufficient torsion elements in the module of differentials $\Omega_S$ helped to solve the Berger's conjecture for $R$. This invariant was also used by the authors in \cite{MaitraMukundan23} extensively to build classes of rings which attests to the positivity of the conjecture.   The main objective of this article is to investigate the values attained by this invariant $s(R)$ and its relationship to the Cohen-Macaulay type $\type(R)$. As the name suggests, the reduced type $s(R)$ is indeed related to $\type(R)$. For, $\frac{\C+x_1R}{x_1R}\subseteq 
\frac{x_1R:\m}{x_1R}$ and the $\k$-vector space dimension of the latter object is precisely $\type(R)$. Thus the maximum and minimum values of $s(R)$ are captured in the inequality
 $$   1\leq s(R)\leq \type(R).$$
Following this observation, we say that $R$ is of \textit{minimal} (resp. \textit{maximal}) reduced type when $s(R)=1$ (resp. $s(R)=\type(R)$). Interestingly, when $R$ is Gorenstein (equivalently $\type(R)=1$), $s(R)$ equals one, but not conversely (see for example \Cref{reduced type one but not gorenstein}). In fact, Gorenstein rings are the only ones of both maximal and minimal reduced type.

Notice that under the above setup, the integral closure $\overline{R}=\k\ps{t}$ is a $DVR$ (Discrete Valuation Ring) and hence one can define an order valuation $v(\cdot)$ on the ring $R$ induced from the one on $\overline{R}$, allowing us to define a valuation semigroup $v(R)$ of $R$. There is a strong relationship between the singularities of $R$ and the structural representations of the valuation semigroup $v(R)$. For example, Kunz \cite{Kunz1970} showed that $R$ is Gorenstein if and only if $v(R)$ is symmetric. This has since then been generalized to almost Gorenstein rings too. Barucci-Froberg \cite{Barucci-Froberg97} showed that $R$ is almost Gorenstein (equivalently $\ell(\overline{R}/R)=\ell(R/\C)+\type(R)-1$, where $\ell(M)$ is the length of the module $M$) if and only if the valuation semigroup $v(R)$ is  almost symmetric and $\type(R)=\type(v(R))$. The Cohen-Macaulay type $\type(v(R))$ can be easily computed as the cardinality of the pseudo-Frobenius set $\PF(v(R))$ (see \Cref{Section on numerical semigroups} for a definition). In fact there are some nice characterizations to check when the valuation semigroup $v(R)$ is almost symmetric (\cite{nari2013symmetries}). We show that the reduced type $s(R)$ is also the cardinality of the set $[c-a_1,c-1]\backslash v(R)$ where $\C=(t^c)\overline{R}$, the conductor of the ring $R$ (see \Cref{redtypeinterpret}). The integer $c$ is called the conductor valuation of $R$. For the ring $R$ we can also define a complete numerical semigroup ring $\k\ps{v(R)}=\k\ps{t^{b_1},\dots,t^{b_m}}$ where $v(R)=\langle b_1,\dots,b_m\rangle$. Along with \cite[Proposition II.1.16]{Barucci-Froberg-memoirs} the following set of inequalities become apparent.  $$s\left(\k\ps{v(R)}\right)=s(R)\leq \type(R)\leq |\PF(v(R))|=\type(\k\ps{v(R)}).$$
Using this set of inequalities we conclude:
\begin{theoremx}[\Cref{transitiontosemigroups}]Let $R$ be as described at the beginning of the article.
    \begin{enumerate}
\item The ring $R$ is of minimal reduced type if and only if $\k\ps{v(R)}$ is of minimal reduced type.
   \item The ring $R$ is of maximal reduced type if $\k\ps{v(R)}$ is of maximal reduced type. The converse holds if additionally $\type(R)=\type(\k\ps{v(R)})$.
   \end{enumerate}
   In fact, when $R$ is of minimal multiplicity or when $R$ is almost Gorenstein, then the statements in (2) are equivalent.
\end{theoremx}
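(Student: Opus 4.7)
The plan is to extract everything from the chain of inequalities
\[ s(\k\ps{v(R)}) = s(R) \leq \type(R) \leq |\PF(v(R))| = \type(\k\ps{v(R)}) \]
displayed right before the statement, together with two ancillary results: the Barucci-Fröberg characterization of almost Gorenstein rings (recorded in the introduction) and the standard identity $\type(R) = e(R)-1$ for one-dimensional Cohen-Macaulay local rings of minimal multiplicity. Part (1) is immediate from the leftmost equality: $R$ is of minimal reduced type if and only if $s(R)=1$, which is equivalent to $s(\k\ps{v(R)})=1$, i.e.\ $\k\ps{v(R)}$ is of minimal reduced type.

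For the forward direction of part (2), if $\k\ps{v(R)}$ is of maximal reduced type then $s(\k\ps{v(R)}) = \type(\k\ps{v(R)})$ forces the entire chain to collapse into equalities, and in particular $s(R) = \type(R)$, so $R$ is of maximal reduced type. For the converse, the extra assumption $\type(R) = \type(\k\ps{v(R)})$ combined with $s(R) = \type(R)$ immediately gives $s(\k\ps{v(R)}) = s(R) = \type(R) = \type(\k\ps{v(R)})$, whence $\k\ps{v(R)}$ is of maximal reduced type.

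To upgrade (2) to a biconditional in the two stated situations, it suffices to verify that the hypothesis $\type(R) = \type(\k\ps{v(R)})$ is automatic. When $R$ is almost Gorenstein, Barucci-Fröberg's theorem directly supplies $\type(R) = \type(v(R)) = |\PF(v(R))| = \type(\k\ps{v(R)})$. When $R$ has minimal multiplicity, the plan is to combine the standard identity $\type(R) = a_1 - 1$ with the universal bound $|\PF(S)| \leq m(S) - 1$ valid for any numerical semigroup $S$ of multiplicity $m(S)$. Applied to $S = v(R)$, which has multiplicity $a_1$, this bound and the middle inequality $\type(R) \leq |\PF(v(R))|$ of the chain sandwich $|\PF(v(R))|$ between two copies of $a_1-1$, forcing $\type(R) = |\PF(v(R))| = \type(\k\ps{v(R)})$. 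The only nontrivial external input is the universal bound on $|\PF|$, which is classical in numerical semigroup theory; beyond that, the argument is pure bookkeeping with the chain, so I expect no genuine obstacle.
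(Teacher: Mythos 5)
Your proposal is correct and follows essentially the same route as the paper: parts (1) and (2) are read off from the chain $s(\k\ps{v(R)})=s(R)\leq \type(R)\leq \type(\k\ps{v(R)})$, the almost Gorenstein case invokes Barucci--Fr\"oberg to get $\type(R)=\type(\k\ps{v(R)})$, and the minimal multiplicity case is the same sandwich argument (the paper phrases the universal bound as $\type+1\leq e$ for the Cohen--Macaulay local ring $\k\ps{v(R)}$, which is the same fact as your $|\PF(S)|\leq m(S)-1$ in this context). No gaps.
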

The above result indicates that the exact nature of the minimal/maximal reduced type of $R$ can often be captured by studying the minimal/maximal reduced type of the complete numerical semigroup $\k\ps{v(R)}$. As such, we focus our attention to complete numerical semi-group rings and provide explicit characterization (\Cref{mainpropo} and \Cref{minredtypemainprop}) of maximal/minimal reduced type rings in terms of the distribution of the pseudo-Frobenius numbers. For instance, the above study yields the following criteria. 

\begin{theoremx}[\Cref{minmultcrit,minmultminredtype}]
    Let $R=\k\ps{t^{a_1},t^{a_2},\ldots,t^{a_n}}$ be a complete numerical semigroup ring of minimal multiplicity. Then $R$ is of maximal (resp. minimal) reduced type if and only if $a_2\geq a_n-a_1+1$ (resp. $a_{n-1}+a_1-1<a_n$).
\end{theoremx}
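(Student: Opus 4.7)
The plan is to rephrase both $s(R)$ and $\type(R)$ in terms of the Apéry set $\mathrm{Ap}(v(R), a_1)$ and exploit the special structure forced by minimal multiplicity. Minimal multiplicity is equivalent to $n = a_1$ together with $\mathrm{Ap}(v(R), a_1) = \{0, a_2, \dots, a_n\}$, since each generator $a_i$ with $i \geq 2$ is the smallest element of $v(R)$ in its residue class modulo $a_1$. In particular the conductor valuation becomes $c = a_n - a_1 + 1$.

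The key technical step is to determine $\PF(v(R))$. I would show that every nonzero Apéry element is maximal under the partial order $\leq_{v(R)}$ given by $x \leq_{v(R)} y$ iff $y - x \in v(R)$. Suppose for contradiction that $a_j - a_i \in v(R)$ for some $2 \leq i < j \leq n$. Since $a_i$ and $a_j$ sit in distinct nonzero residue classes modulo $a_1$, the difference $a_j - a_i$ has nonzero residue, so $a_j - a_i = w + p a_1$ where $w$ is the Apéry element of that residue class and $p \geq 0$. Minimality of $a_j$ in its own residue class (applied to the element $a_i + w \in v(R)$) forces $p = 0$ and $a_j = a_i + w$. But $w = a_k$ for some $k \geq 2$ by minimal multiplicity, so $a_j = a_i + a_k$ lies in $\m^2$, contradicting its minimality as a generator. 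Hence $\PF(v(R)) = \{a_i - a_1 : 2 \leq i \leq n\}$ and $\type(R) = n - 1$.

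Next, using the paper's interpretation $s(R) = |[c - a_1, c - 1] \setminus v(R)|$ together with the inclusion $[c - a_1, c - 1] \setminus v(R) \subseteq \PF(v(R))$ (any such gap plus any nonzero element of $v(R)$ lies in $[c, \infty) \subseteq v(R)$), I observe that $a_i - a_1 \in [c - a_1, c - 1]$ precisely when $a_i \geq c$, so
\[
s(R) = |\{i : 2 \leq i \leq n, \; a_i \geq a_n - a_1 + 1\}|.
\]
Note that $a_n \geq c$ is automatic, so this cardinality is always at least one.

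The conclusions then follow immediately. Maximal reduced type $s(R) = \type(R) = n - 1$ holds iff every $a_i$ with $i \geq 2$ satisfies $a_i \geq c$; since $a_2$ is smallest, this is equivalent to $a_2 \geq a_n - a_1 + 1$. Minimal reduced type $s(R) = 1$ requires that $a_n$ be the unique generator $\geq c$, i.e.~$a_{n-1} < c = a_n - a_1 + 1$, which rearranges to $a_{n-1} + a_1 - 1 < a_n$. The main obstacle is the combinatorial argument that every nonzero Apéry element is maximal under $\leq_{v(R)}$, which is where the minimal-generator hypothesis is essential; everything else is bookkeeping with residues modulo $a_1$.
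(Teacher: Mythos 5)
Your proof is correct and follows essentially the same route as the paper: both arguments reduce the statement to the description $\PF(v(R))=\{a_2-a_1,\dots,a_n-a_1\}$ in the minimal multiplicity case, combined with $c=a_n-a_1+1$ and the identification of $s(R)$ with the count of pseudo-Frobenius numbers in $[c-a_1,c-1]$. The only difference is that you prove the pseudo-Frobenius description via the maximality of the nonzero Ap\'ery elements, whereas the paper simply cites this fact from Rosales and Garc\'ia-S\'anchez; your Ap\'ery-set argument is a correct self-contained proof of that cited result.
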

When $R$ is almost Gorenstein, the maximal and minimal reduced type property bounds the conductor valuation $c$ of $R$ (\Cref{AlmostGorensteinVal}). But when the type is 2, these statements are actually equivalent. These types of rings are called \textit{Kunz rings}.
\begin{theoremx}[\Cref{pseudosymm}]Let $R=\k\ps{t^{a_1},t^{a_2},\ldots,t^{a_n}}$ be a complete numerical semigroup ring which is almost Gorenstein of type 2 with conductor valuation $c$. Then $R$ is of maximal (resp. minimal) reduced type if and only if $c\leq 2a_1-1$ (resp. $c>2a_1-1$).
\end{theoremx}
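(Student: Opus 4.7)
The plan is to identify $s(R)$ with the number of pseudo-Frobenius elements of $v(R)$ that fall in the top interval $[c-a_1,c-1]$, and then invoke the explicit structure of $\PF(v(R))$ in the pseudo-symmetric case. First I would apply the interpretation $s(R)=|[c-a_1,c-1]\setminus v(R)|$ recorded earlier in \Cref{redtypeinterpret}.

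The key reduction is that every gap of $v(R)$ lying in $[c-a_1,c-1]$ is in fact a pseudo-Frobenius element. Indeed, if $x\in[c-a_1,c-1]$ is a gap and $s\in v(R)\setminus\{0\}$, then $s\geq a_1$, so $x+s\geq c$ and hence $x+s\in v(R)$. Thus $x\in\PF(v(R))$, which yields
\begin{equation*}
s(R)=\bigl|\PF(v(R))\cap [c-a_1,c-1]\bigr|.
\end{equation*}

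Next I would use that $R$ is almost Gorenstein of type $2$. By the Barucci-Fr\"oberg criterion quoted in the introduction, $v(R)$ is then almost symmetric of type $2$, i.e.\ pseudo-symmetric, which forces
\begin{equation*}
\PF(v(R))=\bigl\{(c-1)/2,\ c-1\bigr\},
\end{equation*}
with $c-1$ even. The Frobenius number $c-1$ always sits in $[c-a_1,c-1]$, so it contributes $1$ to $s(R)$ automatically. The remaining question is whether $(c-1)/2$ also lies in this interval; the inequality $c-a_1\leq(c-1)/2$ rearranges to $c\leq 2a_1-1$. Hence $s(R)=2$ precisely when $c\leq 2a_1-1$ (maximal reduced type) and $s(R)=1$ precisely when $c>2a_1-1$ (minimal reduced type).

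The main subtle step is the reduction from counting arbitrary gaps of $v(R)$ in the top interval to counting pseudo-Frobenius elements; once that is in place the pseudo-symmetric structure makes both equivalences fall out from a single inequality, and no further calculation with the generators $a_2,\dots,a_n$ is needed.
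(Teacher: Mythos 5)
Your proposal is correct and follows essentially the same route as the paper: the paper's proof combines \Cref{redtypeinterpret} with \Cref{mainpropo} (whose proof contains exactly your observation that $[c-a_1,c-1]\setminus v(R)\subseteq \PF(v(R))$) and Nari's theorem to get $\PF(v(R))=\{(c-1)/2,\,c-1\}$, then reduces everything to the inequality $(c-1)/2\geq c-a_1$, i.e.\ $c\leq 2a_1-1$. You have merely inlined the auxiliary results (\Cref{mainpropo} and \Cref{AlmostGorensteinVal}) rather than citing them; the mathematical content is identical.
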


Herzog, Kumashiro and Stamate in \cite{herzog2021tiny} introduced the notion of far-flung Gorenstein rings (see \Cref{section of far flung gorenstein} for a definition). We show that the reduced type of such rings are at least $2$ (\Cref{redtype2}) and hence they are never of minimal reduced type. But the maximal reduced type can be understood from constraints in $\type(R)$.
\begin{theoremx}[\Cref{ffg type leq 3,ffg type 4 non min mult}] Let $R$ be a far-flung Gorenstein complete numerical semigroup ring of $\type(R)\leq 3$ or $\type (R)=4 $ of non-minimal multiplicity. Then  $R$ has maximal reduced type.
\end{theoremx}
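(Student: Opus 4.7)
The plan is to exploit the numerical-semigroup translation of reduced type together with the lower bound $s(R)\geq 2$ supplied by \Cref{redtype2}. Since $s(R) = |[c-a_1,c-1]\setminus v(R)|$ and every element of $v(R)\setminus\{0\}$ is at least $a_1$, each gap $n\in[c-a_1,c-1]$ is automatically a pseudo-Frobenius number: indeed for any $b\in v(R)\setminus\{0\}$ we have $n+b\geq c$, hence $n+b\in v(R)$. Therefore
\[
s(R) \;=\; \bigl|\PF(v(R))\cap [c-a_1,c-1]\bigr|,
\]
so $R$ has maximal reduced type precisely when every pseudo-Frobenius number $f$ satisfies $c-f\leq a_1$. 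The Frobenius number $c-1$ always lies in this window, so the question is about the remaining $\type(R)-1$ pseudo-Frobenius numbers.

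For $\type(R)\leq 2$ the conclusion is immediate: if $\type(R)=1$ then $R$ is Gorenstein and trivially of maximal reduced type; if $\type(R)=2$ then $2\leq s(R)\leq \type(R)=2$.

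For $\type(R)=3$, write $\PF(v(R))=\{f_1<f_2<f_3=c-1\}$. The bound $s(R)\geq 2$ already forces $f_2\in [c-a_1,c-1]$, so it remains to prove $f_1\geq c-a_1$. I would argue by contradiction, assuming $f_1+a_1<c$. The far-flung Gorenstein hypothesis, as formulated in \Cref{section of far flung gorenstein} following \cite{herzog2021tiny}, imposes rigid relations among the quantities $c-f_i$ through the canonical module structure. The strategy is to feed the offending $f_1$ into these relations to either produce an additional element of $\PF(v(R))$, contradicting $|\PF(v(R))|=3$, or to contradict the symmetry constraints on the $f_i$ themselves.

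For $\type(R)=4$ with non-minimal multiplicity, write $\PF(v(R))=\{f_1<f_2<f_3<f_4=c-1\}$. Here $s(R)\geq 2$ only places $f_3$ in the interval, and one must establish both $f_1\geq c-a_1$ and $f_2\geq c-a_1$. The argument proceeds in the same spirit as the type 3 case, but now the non-minimal multiplicity hypothesis $a_1>n$ is needed to ensure enough slack in the generating set so that the far-flung Gorenstein relations genuinely force the two smaller pseudo-Frobenius numbers into the required range. The criterion in \Cref{minmultcrit}, stating that maximal reduced type under minimal multiplicity is equivalent to $a_2\geq a_n-a_1+1$, already indicates that minimal multiplicity is the boundary case where maximal reduced type can fail; this explains the exclusion in the hypothesis.

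The main obstacle will be to translate the definition of far-flung Gorenstein from \Cref{section of far flung gorenstein} into a clean arithmetic statement on $\PF(v(R))$ and then to dovetail it with the counting identity $\type(R)=|\PF(v(R))|$. The type 4 non-minimal-multiplicity step is the most delicate, since one must simultaneously control two small pseudo-Frobenius numbers and genuinely use $a_1>n$ to preclude the pathological configurations that arise under minimal multiplicity.
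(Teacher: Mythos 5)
Your reduction is correct and matches the paper's: every gap in $[c-a_1,c-1]$ is automatically pseudo-Frobenius, so maximal reduced type is equivalent to $\min \PF(v(R))\geq c-a_1$ (this is \Cref{mainpropo}), the type $\leq 2$ cases follow from \Cref{redtype2}, and for type $3$ the ordering of the $f_i$ together with $s(R)\geq 2$ does place $f_2$ in the window. But from that point on you have only announced a strategy (``feed the offending $f_1$ into these relations\dots''), and the step you defer --- ``translate the definition of far-flung Gorenstein into a clean arithmetic statement on $\PF(v(R))$'' --- is precisely the entire content of the remaining argument, not an obstacle to be smoothed over later. The needed translation is the characterization from \cite{herzog2021tiny}: one may take $\omega_R=\sum_{\alpha\in\PF(v(R))}Rt^{c-1-\alpha}$, and the far-flung condition says $\{0,1,\dots,e(R)-1\}\subseteq A+A$ where $A=\{c-1-\alpha\mid \alpha\in\PF(v(R))\}$; moreover $0,1\in A$ always. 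The goal $\min\PF\geq c-a_1$ becomes $\max A\leq e(R)-1$, and the proof is a sumset computation: for type $3$ with non-minimal multiplicity one has $e(R)=5$ and $A=\{0,1,a\}$, forcing $a\in\{2,3\}\leq 4$; for type $4$ with non-minimal multiplicity one has $e(R)\geq 6$ and $A=\{0,1,a,b\}$ with $a\in\{2,3\}$, and if $b\geq e(R)$ then $\{0,\dots,e(R)-1\}$ would have to be covered by $\{0,1,a\}+\{0,1,a\}$, which misses $5$. None of this appears in your proposal, and without it there is no proof.

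Two further points. First, the type $3$ minimal multiplicity case needs a separate argument (the paper uses $e(R)=\edim(R)=4$, the Ap\'ery description $\PF=\{b_2-4,b_3-4,b_4-4\}$, and the covering condition on $\{2b_4-3,\dots,2b_4\}$ to force $b_2\geq c$); your sketch does not distinguish this case, and the non-minimal-multiplicity structure results you would want to invoke do not apply to it. Second, your explanation of why non-minimal multiplicity is needed in type $4$ (``slack in the generating set so that $a_1>n$'') is not the actual mechanism: what non-minimal multiplicity buys, via \cite[Corollary 5.3]{herzog2021tiny}, is the lower bound $e(R)\geq 6$, which is exactly what makes the sumset $\{0,1,a\}+\{0,1,a\}$ fail to cover $\{0,\dots,e(R)-1\}$; under minimal multiplicity $e(R)=5$ is possible and the paper's example $\k\ps{t^5,t^{11},t^{17},t^{18},t^{19}}$ shows the conclusion genuinely fails there.
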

Our methods do not immediately generalize to higher types in such rings. The lack of answers to \cite[Questions 5.5]{herzog2021tiny} is the primary obstruction. 
Bounds on multiplicity of $R$ also have implications on the maximal reduced type of such rings (\Cref{ffG rohrbach maxreduced type} and \Cref{ffG rohrback type 5}). 

In \Cref{section on gluing} we study the gluing on two semigroups as one of the interesting means of constructing more minimal reduced type examples.

In \Cref{categories}, we study the finiteness of the categories of maximal Cohen-Macaulay modules, $\cm(R)$, and the category of reflexive modules, $\rf(R)$, primarily in the context of numerical semigroup rings of maximal/minimal reduced type (refer to \Cref{categories} for the relevant definitions). A category is said to be of finite type if there are only finitely many indecomposable objects upto isomorphism. Notably, such finiteness studies have been of interest in both algebra and representation theory; the central idea of the latter is to analyze an algebraic object based the way it acts on sets. The representation theory of maximal Cohen-Macaulay modules has garnered high interest in the last 50 years, for instance these modules play an important role in Hochster's study of homological conjectures. We refer the reader to various sources such as \cite{herzog1978ringe}, \cite{knorrer1986cohen},\cite{Greuel-Knorrer85},  \cite{buchweitz1987cohen}, \cite{solberg1989hypersurface}, \cite{Yoshino90}, \cite{wiegand1998local}, \cite{leuschke2012cohen}, etc.  which discuss such classifications in great details. In particular, it is well-known that for finiteness of $\cm(R)$, one only needs to focus on multiplicity at most $3$. In \Cref{fintyperes1}, we first establish the explicit forms of numerical semigroup rings of minimal multiplicity $3$ such that $\cm(R)$ is of finite type and we use this as our tool to study $\rf(R)$ subsequently. The study of the category $\rf(R)$ has attracted quite a bit of attention recently, for instance see \cite{kobayashi2022set}, \cite{kobayashi2022syzygies}, \cite{dao2023reflexive}, etc. The categories $\rf(R)$  and $\cm(R)$ coincide when $R$ is Gorenstein due to the work of Vasconcelos \cite{vasconcelos68}. Outside the Gorenstein situation, however, the finiteness problem still is open. We provide explicit descriptions of the almost Gorenstein scenario in the context of maximal/minimal reduced type.


\begin{theoremx}[\Cref{almostGorensteinminredtyperef,ref finite e leq 2,ref finite e =3}, \Cref{minmult3almostgorensteinref}]Suppose $R$ is a complete numerical semigroup ring such that $R$ is almost Gorenstein  but not Gorenstein. Then the following statements are true.
    \begin{enumerate}
        \item Suppose $R$ is of minimal reduced type. Then $\rf (R)$ is of finite type if and only if $R$ is either $\k\ps{t^3,t^7,t^{11}}$ or $\k\ps{t^3,t^8,t^{13}}$. 
        \item Suppose $R$ is of maximal reduced type 2. If $R$ is of minimal multiplicity, then $\cm(R)$ (and hence $\rf(R)$) is of finite type).
        \item Suppose $R$ is of maximal reduced type and $e(\m:\m)\leq 2$. Then $\rf(R)$ is of finite type if and only if $R$ is either $\k\ps{t^{a_1},t^{a_1+1},\dots,t^{2a_1-1}}$ or $\k\ps{t^{a_1},t^{a_1+2},t^{a_1+3},\dots,t^{2a_1+1}}$.
        \item Suppose $R$ is of maximal reduced type and $e(\m:\m)=3$, then $\rf(R)$ is of finite type if and only if $R=\k\ps{t^{a_1},t^{a_1+1},t^{a_1+3},\dots,t^{2a_1-1}}, a_1\geq 4$.
    \end{enumerate}
\end{theoremx}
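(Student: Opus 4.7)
The plan is to translate each hypothesis into a numerical constraint on the value semigroup $v(R)$ via the earlier characterizations (\Cref{minmultcrit,minmultminredtype,pseudosymm,mainpropo,minredtypemainprop}), and then invoke two basic facts about reflexive--finite type in dimension one: every reflexive module over a one-dimensional Cohen--Macaulay local domain is torsion-free and hence MCM, so $\rf(R)\subseteq \cm(R)$; and finiteness of $\rf(R)$ is tightly linked to finiteness of $\cm(S)$ over the birational extension $S=\m:\m$, which in turn forces $e(S)\leq 3$. The explicit classification of multiplicity-three numerical semigroup rings of finite $\cm$ type in \Cref{fintyperes1} then reduces each assertion to a finite case check.

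For part (1), I would first combine almost symmetry of $v(R)$ with \Cref{minredtypemainprop} to concentrate the pseudo-Frobenius elements of $v(R)$ in a narrow range. Since $R$ is almost Gorenstein but not Gorenstein, $e(R)\neq 2$, and finiteness of $\rf(R)$ together with the structure of $S=\m:\m$ in the minimal reduced type regime then pushes $e(R)$ down to $3$. Among the multiplicity-three rings of finite $\cm$ type listed in \Cref{fintyperes1}, those which are almost Gorenstein, non-Gorenstein, and of minimal reduced type are exactly $\k\ps{t^3,t^7,t^{11}}$ and $\k\ps{t^3,t^8,t^{13}}$; for both I would verify reflexive finiteness directly by enumerating the indecomposable reflexives.

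For parts (2)--(4), the maximal reduced type assumption is first turned into an explicit form of $v(R)$ using \Cref{mainpropo,pseudosymm}. In part (2), type $2$ almost Gorenstein together with minimal multiplicity and maximal reduced type produces a short embedded presentation of multiplicity at most three, and the classical multiplicity-at-most-three classification applied to $R$ itself yields finite $\cm(R)$ (and therefore finite $\rf(R)$). In parts (3) and (4), the bound $e(\m:\m)\leq 3$ lets me apply \Cref{fintyperes1} to $S=\m:\m$; the semigroup families
\[
\k\ps{t^{a_1},t^{a_1+1},\ldots,t^{2a_1-1}},\ \k\ps{t^{a_1},t^{a_1+2},\ldots,t^{2a_1+1}},\ \k\ps{t^{a_1},t^{a_1+1},t^{a_1+3},\ldots,t^{2a_1-1}}
\]
are exactly the ones whose blowup is simultaneously compatible with the maximal reduced type condition (via \Cref{mainpropo}) and with finite $\cm$ type for $S$, and the converse in each case is a direct verification of $\rf$-finiteness.

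The main obstacle will be the explicit computation of $e(\m:\m)$ from the pseudo-Frobenius data of $v(R)$, and matching this with the classification in \Cref{fintyperes1}: one needs to analyze how many gaps of $v(R)$ lie in the interval $[c-a_1,c-1]$ and how this constrains the embedding dimension of $S$. A secondary difficulty is tracking the direction of implication between $\rf(R)$-finiteness and $\cm(S)$-finiteness, which in the non-Gorenstein setting is delicate and uses the almost Gorenstein hypothesis nontrivially.
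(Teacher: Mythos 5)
Your overall skeleton --- reducing $\rf(R)$-finiteness to $\cm(B)$-finiteness for the birational extension $B=\End_R(\m)=\m:\m$ via the almost Gorenstein equivalence, bounding $e(B)\leq 3$, and then matching against an explicit multiplicity-three classification --- is the same route the paper takes. However, the execution of part (1) contains a genuine error. You propose to select the answer rings from "among the multiplicity-three rings of finite $\cm$ type listed in \Cref{fintyperes1}," but that list consists of $\k\ps{t^3,t^4,t^5}$ and $\k\ps{t^3,t^5,t^7}$ only, and both of these are of \emph{maximal} reduced type by \Cref{minmultcrit}; filtering them by "minimal reduced type" returns the empty set, not $\k\ps{t^3,t^7,t^{11}}$ and $\k\ps{t^3,t^8,t^{13}}$. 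The point is that these two rings do \emph{not} have finite $\cm$ type themselves --- $\rf(R)$ finite only forces $\cm(B)$ finite, not $\cm(R)$ finite. The classification must be applied to $B$, not to $R$: since $s(R)=1$ and $R$ is not Gorenstein, \Cref{dualmultiplicitydrop}(1) gives $e(R)=e(B)\leq 3$, whence $R$ has minimal multiplicity $3$ and type $2$; the almost Gorenstein structure forces $H=\langle 3,h+3,2h+3\rangle$ with $\PF(H)=\{h,2h\}$ and (by minimal reduced type) $h\geq 4$, so $B=\k\ps{t^3,t^h}$ is \emph{Gorenstein} and one must invoke \Cref{ADEresult} (the $E_6$, $E_8$ singularities give $h=4,5$), not \Cref{fintyperes1}, which only covers the non-Gorenstein case. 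Your fallback of "enumerating the indecomposable reflexives directly" is not needed once the equivalence with $\cm(B)$ is in place, and as stated it would not rescue the incorrect filtering step.

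A smaller but real gap occurs in part (2): you assert that "the classical multiplicity-at-most-three classification applied to $R$ itself yields finite $\cm(R)$," but $e(R)\leq 3$ alone is not sufficient --- \Cref{cmfinite} also requires $\mu\bigl((\m\overline{R}+R)/R\bigr)\leq 1$, and multiplicity-three numerical semigroup rings such as $\k\ps{t^3,t^7,t^8}$ fail it. One must actually pin down $H$: minimal multiplicity plus almost Gorenstein gives $H=\langle 3,h+3,2h+3\rangle$, and maximal reduced type forces $h\leq 2$ via \Cref{minmultcrit}, so $H$ is $\langle 3,4,5\rangle$ or $\langle 3,5,7\rangle$, and only then does \Cref{fintyperes1} apply. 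Parts (3) and (4) are described too vaguely to assess fully, but the intended mechanism (apply \Cref{fintyperes1} and \Cref{ADEresult} to $B$, use \Cref{mainpropo} and the almost Gorenstein symmetry of $\PF(H)$ to reconstruct $H$ from $v(B)$, splitting on $e(B)=1,2$ versus $e(B)=3$ and on $\edim(B)$) is consistent with the paper's argument.
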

\subsection*{Acknowledgements}  Much of this article is dedicated to answering Craig Huneke's questions about reduced type. We owe him a great deal for the understanding of this invariant. Additionally, we would like to thank Anurag Singh, Kei-ichi Watanabe, and Hailong Dao for various discussions and for providing us with valuable references.


\section{Maximal and Minimal Reduced type for one dimensional complete local domains}
Let $(R,\mathfrak{m},\k)$ be a non-regular one dimensional complete local domain which is a $k$-algebra. We assume that $\k$ is algebraically closed of characteristic $0$. The integral closure of $R$ in its field of fractions $K$ if denoted by $\overline{R}$. Notice that $\overline{R}=\k\ps{t}$. Using this and the fact that $R=\frac{\k\ps{X_1,\dots, X_n}}{I}$, $I\subseteq (X_1,\dots,X_n)^2$, we can write each of the images of $X_i$ modulo $I$, denoted $x_i$, as $x_i=\alpha_it^{a_i}$. Here $\alpha_i$ are units of $\overline{R}$. We also arrange the $x_i$ such that $a_1<a_2<\dots<a_n$. Here $n$ is the embedding dimension of $R$, which is defined as the minimal number of generators of the maximal ideal $\m$, denoted $\edim(R)$. By our assumption, $n\geq 2$.

There is a valuation on $\overline{R}$ (a DVR) which in turn induces a valuation on $R$ given by 
\begin{align*}
    v\left(\sum_i c_it^{i}\right)=\min \{i~|~c_i\neq 0\}\text{ where }c_i\in\k.
\end{align*}
Also,  we set $v(\alpha)=0$ if $\alpha$ is a unit in $\overline{R}$. This valuation extends to  the valuation on $\Frac R$, the fraction field of $R$, given by $v(\frac{\alpha}{\beta})=v(\alpha)-v(\beta)$ for $\alpha,\beta\in R$. For any $R$ submodule $M$ of $\Frac R$, we let $v(M)$ to be the collection $\{ v(\alpha)~|~\alpha\in M\}$. The set $v(R)$ forms a semigroup called the valuation semigroup of $R$. 



Let $\cC=R:_K\overline{R}$ denote the conductor ideal of $R$ in $\overline{R}$. Observe that $\cC=(t^{c+i})_{i\geq 0}$, where $c$ is the positive integer such that $t^{c-1}\not \in R$ but $t^{c+i}\in R$ for all $i\geq 0$. 
\begin{remark}\label{multiplicityandconductor}
Notice that $\m\overline{R}=x_1\overline{R}$ and thus $$\m=\m\overline{R}\cap R=x_1\overline{R}\cap R=\overline{x_1},$$ i.e., $x_1$ is a minimal reduction of $\m$.  Since $\m\overline{R}=x_1\overline{R}$, we get that $\m\cC=x_1\cC$. Hence, $\mu(\cC)=\ell(\cC/x_1\cC)=e(x_1;\cC)=e(x_1;R)\Rank_R(\cC)=\ell(R/x_1R)\Rank_R(\cC)=e(R)$. The last few equalities follow from well-known properties of the Hilbert-Samuel multiplicity, and also the fact that a non-zero ideal is a maximal Cohen-Macaulay module of rank one. 
\end{remark}
The following result which we will keep using in the sequel is due to Herzog and Kunz. Many of the length computations reduce to counting arguments in the valuation semigroup.

\begin{proposition}{\cite[Proposition 2.9]{herzog1971wertehalbgruppe}}\label{herzog-kunz-length}
    Suppose $M_1\subseteq M_2$ are two $R$ submodules of $\Frac R$ with $M_1\neq 0$. Then $M_2/M_1$ has finite length and its length $\ell(M_2/M_1)$ is the number of elements in $v(M_2)$ but not in $v(M_1)$.
\end{proposition}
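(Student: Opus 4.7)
The plan is to filter $M_2$ by the valuation and compute the length of $M_2/M_1$ as the sum of dimensions of successive quotients, each of which will turn out to be either zero or one-dimensional over $\k$. For each integer $n$, set $M_2^{(n)} := \{x \in M_2 \mid v(x) \geq n\}$; because $v$ is a valuation and $R \subseteq \overline{R}$, this is an $R$-submodule, and one gets a descending chain $\cdots \supseteq M_2^{(n)} \supseteq M_2^{(n+1)} \supseteq \cdots$. This induces a corresponding filtration $F_n := (M_2^{(n)} + M_1)/M_1$ on $M_2/M_1$, and by the standard isomorphism theorems,
\[
F_n/F_{n+1} \;\cong\; M_2^{(n)}\big/\bigl((M_2^{(n)} \cap M_1) + M_2^{(n+1)}\bigr).
\]

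Next I would analyze these successive quotients to show they are nonzero precisely when $n \in v(M_2) \setminus v(M_1)$. If $n \notin v(M_2)$, then $M_2^{(n)} = M_2^{(n+1)}$ and the quotient is zero. If $n \in v(M_2)$, pick any $y \in M_2$ with $v(y) = n$. Using that $\overline{R} = \k\ps{t}$ is a DVR with residue field $\k$, every $x \in M_2^{(n)}$ admits a unique decomposition $x = \alpha y + x'$ with $\alpha \in \k$ and $v(x') > n$ (by matching leading coefficients in $\k\ps{t}$); since $\alpha \in \k \subseteq R$ and $y \in M_2^{(n)}$, we get $x' \in M_2^{(n)}$ and hence $x' \in M_2^{(n+1)}$, yielding $M_2^{(n)}/M_2^{(n+1)} \cong \k$. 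If additionally $n \in v(M_1)$, then $y$ can be chosen inside $M_1$, so its class already generates $M_2^{(n)}/M_2^{(n+1)}$ and hence $F_n/F_{n+1} = 0$. If instead $n \notin v(M_1)$, then any $x \in M_2^{(n)} \cap M_1$ satisfies $v(x) \geq n$ with $v(x) \in v(M_1)$, forcing $v(x) > n$ and $M_2^{(n)} \cap M_1 \subseteq M_2^{(n+1)}$, so $F_n/F_{n+1} \cong \k$.

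To finish, I would invoke finite length: since $M_1$ and $M_2$ are finitely generated fractional ideals of $R$, both are sandwiched between $\alpha R$ and $\beta R$ for some $\alpha, \beta \in \Frac(R)^\times$, and so $M_2/M_1$ is a finitely generated torsion $R$-module of finite length. The filtration $\{F_n\}$ then stabilizes outside a bounded range of $n$, and summing the one-dimensional contributions gives $\ell(M_2/M_1) = |v(M_2) \setminus v(M_1)|$. The main technical subtlety lies in the leading-coefficient decomposition: one needs the residue field of $\overline{R}$ to coincide with $\k$ so that the scalar $\alpha$ actually lies in $R$, ensuring $x - \alpha y$ stays inside the $R$-module $M_2^{(n)}$ rather than drifting into $\overline{R}$. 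Once this is in place, the rest is bookkeeping, and the argument in particular recovers the length formulas (such as $\ell(\overline{R}/R)$ being the number of gaps of $v(R)$) used freely elsewhere in the paper.
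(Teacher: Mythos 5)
The paper does not actually prove this proposition; it is quoted from Herzog--Kunz and used as a black box, so there is no internal proof to compare against. Your argument is a correct and self-contained proof of the cited result. The filtration $M_2^{(n)}=\{x\in M_2 \mid v(x)\geq n\}$ is indeed an $R$-submodule because $v$ is nonnegative on $R$, the identification $F_n/F_{n+1}\cong M_2^{(n)}/\bigl((M_2^{(n)}\cap M_1)+M_2^{(n+1)}\bigr)$ follows from the modular law since $M_2^{(n+1)}\subseteq M_2^{(n)}$, and the leading-coefficient decomposition $x=\alpha y+x'$ with $\alpha\in\k$ works precisely because the coefficient field of $\overline{R}=\k\ps{t}$ is $\k\subseteq R$ --- the subtlety you correctly isolate as the crux. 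The case analysis (quotient zero when $n\notin v(M_2)$ or $n\in v(M_1)$, isomorphic to $\k$ when $n\in v(M_2)\setminus v(M_1)$) is airtight. Two small remarks. First, the proposition as stated is false for arbitrary submodules of $\Frac R$ (take $M_2=\Frac R$, $M_1=R$); you implicitly restore the intended hypothesis by assuming $M_1,M_2$ are finitely generated fractional ideals, which is how the result is used throughout the paper, so this is fine but worth stating explicitly. Second, for the termination of the filtration you can argue directly: choosing $0\neq a\in M_1$ and letting $\cC=t^c\overline{R}$ be the conductor, one has $M_2^{(n)}\subseteq t^{v(a)+c}\overline{R}=a\cC\subseteq aR\subseteq M_1$ for all $n\geq v(a)+c$, which makes the finite-length claim an output of the filtration rather than a separate input.
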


\begin{definition}\cite{huneke2021torsion}
We define the reduced type of $R$, denoted $s(R)$ as follows:
$$s(R):=\dim_\k\frac{\cC+x_1R}{x_1R}$$
\end{definition}

Since $\m\cC=x_1\cC\subseteq x_1R$, the above definition makes sense. Also notice by the same inclusions, we get that $s(R)\leq \dim_\k\frac{x_1R:_R\m}{x_1R}$. The latter quantity is precisely the type of $R$ and this justifies the nomenclature. Further, it is well-known that $\cC$ is not contained in any proper principal ideal. Thus, we get that $$1\leq s(R)\leq \type(R).$$

\begin{definition}
We say $R$ has maximal reduced type if $s(R)=\type(R)$. Analogously, we say that $R$ has minimal reduced type if $s(R)=1$.
\end{definition}

\begin{question}\label{question1}
Can we classify when $R$ has maximal or minimal reduced type? 
\end{question}

Notice that if $R$ is Gorenstein, then $R$ has maximal (and minimal) reduced type since $\type(R)=1$. However, $s(R)=1$, does not force $R$ to have type one. 
\begin{example}\label{reduced type one but not gorenstein}
Let $R=\k\ps{t^5,t^8,t^9,t^{11}}$. The conductor $\C=(t^{13})\overline{R}$ and the valuation semigroup is $\{5,8,9,10,11,13\rightarrow\}$. Thus the reduced type $s(R)=1$, but the $\type(R)=2$ and hence not a Gorenstein ring.
\end{example}
\begin{definition}We say that $R$ is of \textit{minimal multiplicity} if $\edim R=e(R)$. \end{definition}
In the rest of the article $\ell(M)$ denotes the length of a module $M$.
\begin{proposition}\label{lengthconditions}
Let $(R,\m,\k)$ be as above. Let $\cC$ denote the conductor and $x_1$ denote the minimal reduction of $\m$ as in the definition of $s(R)$. Then $R$ has maximal reduced type if any of the following holds. \begin{enumerate}
    \item $\ell(R/\cC)\leq 2$.
    
    \item $\ell(R/\cC)=3$ and $R$ does not have minimal multiplicity.
\end{enumerate}
\end{proposition}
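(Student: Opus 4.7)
The plan is to reduce the statement to a combinatorial question on the valuation semigroup $\Gamma := v(R)$. From \Cref{redtypeinterpret} we have the explicit formula $s(R) = |[c-a_1,c-1] \setminus \Gamma|$, and the inequality chain
\begin{equation*}
 s(R) \;\leq\; \type(R) \;\leq\; |\PF(\Gamma)|
\end{equation*}
is recorded in the introduction. Hence it suffices to establish $|[c-a_1,c-1] \setminus \Gamma| = |\PF(\Gamma)|$ under each of the hypotheses; all three quantities then collapse and we conclude $s(R) = \type(R)$. One inclusion is immediate: if $p$ is a gap with $p \in [c-a_1, c-1]$, then $p + a_i \geq p + a_1 \geq c$ for every minimal generator $a_i$ of $\Gamma$, so $p + a_i \in \Gamma$ and $p \in \PF(\Gamma)$. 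The entire task is therefore to forbid any rogue $p \in \PF(\Gamma)$ with $p < c - a_1$. For such a $p$, the fact that $p + a_1 \in \Gamma$ combined with $p + a_1 < c$ puts $p + a_1$ in $\Gamma \cap [0, c-1]$, a set of cardinality $\ell(R/\cC)$ by \Cref{herzog-kunz-length}.

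Under hypothesis (1), since $R$ is non-regular we have $1 \leq \ell(R/\cC) \leq 2$, so $\Gamma \cap [0, c-1] \subseteq \{0, a_1\}$; hence $p + a_1 \leq a_1$, i.e.\ $p \leq 0$, contradicting that $p$ is a positive gap. Under hypothesis (2), $\Gamma \cap [0, c-1] = \{0, a_1, \alpha\}$ where $\alpha$ is the third-smallest element of $\Gamma$, necessarily $\alpha = \min(2a_1, a_2)$. The only option yielding $p \geq 1$ is $p = \alpha - a_1$. If $\alpha = 2a_1$, then $p = a_1 \in \Gamma$, not a gap---contradiction. So $\alpha = a_2 < 2a_1$ and $p = a_2 - a_1$. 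Since any minimal generator $a_i$ with $i \geq 3$ is an element of $\Gamma$ distinct from $\{0, a_1, a_2\}$, it must lie in $[c, c + a_1 - 1]$; this makes $p + a_i > c$ automatic, so verifying $p \in \PF(\Gamma)$ reduces to checking $p + a_2 = 2a_2 - a_1 \in \Gamma$. Since $2a_2 - a_1 \neq 0, a_1, a_2$, this is equivalent to $2a_2 - a_1 \geq c$.

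The closing step is to translate ``$R$ is not of minimal multiplicity'' into a semigroup condition. In our complete setting, minimal multiplicity is equivalent to $\m^2 = x_1\m$, which is equivalent to $a_i + a_j - a_1 \in \Gamma$ for all $i, j \geq 2$. Under hypothesis (2), when $\max(i,j) \geq 3$ we have $a_i + a_j - a_1 > c$ automatically, so the only non-trivial constraint is $2a_2 - a_1 \in \Gamma$. Hence ``not of minimal multiplicity'' is precisely the assumption $2a_2 - a_1 \notin \Gamma$, which is exactly what is needed to kill the rogue pseudo-Frobenius candidate $p = a_2 - a_1$. The main bookkeeping hurdle is confirming that all extra minimal generators of $\Gamma$ land in $[c, c+a_1-1]$ and that no further obstruction to minimal multiplicity can arise from the remaining pairs, so that the single arithmetic constraint $2a_2 - a_1 \in \Gamma$ governs both the existence of a rogue PF and the failure of minimal multiplicity simultaneously.
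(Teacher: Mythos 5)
Your route is genuinely different from the paper's. The paper argues directly with the chain of ideals $\cC \subseteq \cC + x_1R \subseteq x_1R :_R \m \subseteq \m$: it shows $\cC + x_1R \neq \cC$ (otherwise $x_1 \in \cC$ would force $\m = \overline{x_1} \subseteq \cC$, i.e.\ $\ell(R/\cC)=1$), recalls that $R$ fails to have minimal multiplicity exactly when $x_1R :_R \m \subsetneq \m$, and then the bound $\ell(\m/\cC)\leq 1$ (resp.\ $=2$) leaves no room for $\cC+x_1R$ and $x_1R:_R\m$ to differ. Your semigroup argument proves the stronger conclusion $s(R) = |\PF(v(R))|$, which the paper's proof does not give and which, via \Cref{typecomparison}, also forces $\type(R)=\type(\k\ps{v(R)})$ under these hypotheses; that is a real gain. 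Part (1), the identification of the third element of $\Gamma$ as $\min(2a_1,a_2)$, and the reduction of part (2) to the single candidate $p=a_2-a_1$ are all correct.

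The one step that needs repair is the asserted equivalence ``minimal multiplicity $\iff a_i+a_j-a_1\in\Gamma$ for all $i,j\geq 2$.'' The direction you actually use (the semigroup condition implies minimal multiplicity) is false in general, because minimal multiplicity is not an invariant of $v(R)$: for the paper's own example $R=\k\ps{t^4,t^6+t^7,t^{11}}$ one has $v(R)=\langle 4,6,11,13\rangle$, so $2a_2-a_1=8$, $a_2+a_3-a_1=13$ and $2a_3-a_1=18$ all lie in $\Gamma$, yet $e(R)=4>3=\edim(R)$; indeed $x_2^2/x_1=t^8+2t^9+t^{10}\notin R$ even though its valuation lies in $\Gamma$. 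Knowing $v(x_ix_j/x_1)\in\Gamma$ does not put the element $x_ix_j/x_1$ itself in $R$. Your argument is saved only because under hypothesis (2) every relevant valuation sits at or beyond the conductor: since $\Gamma\cap(a_2,c)=\emptyset$ and $2a_2-a_1>a_2$, membership of $2a_2-a_1$ in $\Gamma$ forces $2a_2-a_1\geq c$, whence $x_2^2/x_1\in\cC\subseteq R$ as an honest ring element; and for the remaining pairs $a_i+a_j-a_1\geq a_2+a_3-a_1>c$ because $a_3\geq c$. Combined with $x_1R\cap\m^2=x_1\m$, this closes the implication ``$2a_2-a_1\in\Gamma \Rightarrow$ minimal multiplicity,'' whose contrapositive kills the rogue pseudo-Frobenius number. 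So the strategy is sound, but the conductor step must be made explicit; the purely semigroup-theoretic criterion for minimal multiplicity that you invoke is not available for a general $R$ in this setting.
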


\begin{proof}
Notice that maximal reduced type implies that we have the equality $$\cC+x_1R=x_1R :_R \m.$$ 
Observe that we always have the following inclusions: $$\cC\subseteq \cC+x_1R\subseteq x_1R :_R \m\subseteq \m.$$ 
If $\cC+x_1R=\cC$, then $x_1\in \cC$. 

If $\ell(R/\cC)=1$, then $\cC=\m$ and hence all the inclusions are equalities and we are done. So assume $\ell(R/\cC)=2$. If $\cC+x_1R=\cC$, then $x_1\in \cC$. Thus, $\m=\overline{x_1}\subseteq \overline{\cC}=\cC\subseteq \m$. This shows that $\m=\cC$ but this contradicts the fact, that $\ell(R/\cC)=2$. Hence, $\cC\subsetneq \cC+x_1R$. Since $\ell(\m/\cC)=1$, this shows that $\cC+x_1R=x_1:_R \m = \m$ and we are done with the proof of $(a)$. 

For $(b)$, first we recall the well-known fact that $R$ is of minimal multiplicity if and only if $\m^2=x_1\m$. We include a quick proof here. Since $\edim(R)=e(R)$, we have that $\ell(\m/\m^2)=\ell(R/x_1R)$. The LHS is $\ell(R/\m^2)-1$ whereas the RHS is $\ell(R/x_1\m)-1$ (using the inclusions $\m^2\subseteq \m\subseteq R$ and $x_1\m\subseteq x_1R\subseteq R$ respectively). Thus, $\ell(R/x_1\m)=\ell(R/\m^2)$ which shows that $\m^2=x_1\m$. The converse is clear as well as the arguments are reversible. 

Since $x_1R$ is a minimal reduction of $\m$, we get that $x_1R\cap \m^2=x_1\m$. Hence, $\m^2=x_1\m$ if and only if $x_1R:_R \m=\m$. Since by assumption, $R$ is not of minimal multiplicity, we get the following chain of ideals: $$\cC\subsetneq \cC+x_1R\subseteq x_1R:_R\m\subsetneq \m.$$ The conclusion now follows because $\ell(\m/\cC)=2$. 
\end{proof}

\begin{remark}
The arguments in the first part show that $\m=\cC$ if and only if $\cC=\cC+x_1R$. In fact, one can precisely describe $R$ in the case $\ell(R/\cC)=2$ (see \cite[Proposition 6.3 (2)]{dao2023reflexive}).
\end{remark}

The following example shows that the minimal multiplicity condition cannot be ignored. The computation of $\type(R)$ in the following examples can be obtained by computing the cardinality of the pseudo-Frobenius set $\PF(v(R))$ (we refer to the next section for a definition of pseudo-Frobenius set).

\begin{example}
Let $R=\k\ps{t^4,t^6,t^9,t^{11}}$. Here $\cC=(t^8,t^9,t^{10},t^{11})$. Thus, $\ell(R/\cC)=3$ (using \Cref{herzog-kunz-length}, it is enough to notice that the valuations missing from the conductor are $0,4,6$). Also, $R$ has minimal multiplicity. Finally note that $s(R)=2$ whereas $\type(R)=3$. 
\end{example}

For higher colength, the equality can fail as the following examples show.
\begin{example}
Let $R=\k\ps{t^4,t^6,t^{11},t^{13}}$. Here, $\cC=(t^{10},t^{11},t^{12},t^{13})$ and thus $\ell(R/\cC)=4$. Also, $R$ has minimal multiplicity. But $s(R)=2$ whereas $\type(R)=3$. 
\end{example}


\begin{example}
Let $R=\k\ps{t^5,t^7,t^8,t^{11}}$. The conductor is $\cC=(t^{10})\overline{R}$. We get that $\ell(R/\cC)=4$, $s(R)=2, \type(R)=3$. Here $R$ is not of minimal multiplicity.
\end{example}
\begin{theorem}\label{redtypeinterpret}
The quantity $s(R)$ equals the number of integers in the interval $[c-a_1,c-1]$ that are missing from the value group of $R$. 
\end{theorem}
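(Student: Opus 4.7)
The plan is to compute $s(R) = \ell((\cC + x_1R)/x_1R)$ combinatorially using the Herzog--Kunz length formula (\Cref{herzog-kunz-length}) and then read off the stated count directly from the valuation semigroup. By that proposition, $s(R)$ equals the number of integers in $v(\cC + x_1R)$ but not in $v(x_1R)$, so the entire argument reduces to understanding these two value sets and their difference.

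First I would compute the two value sets separately. Since $\overline{R}$ is a DVR with uniformizer $t$ and $\cC = t^c \overline{R}$, we have $v(\cC) = \{n \in \ZZ : n \geq c\}$. Since $x_1 = \alpha_1 t^{a_1}$ with $\alpha_1$ a unit in $\overline{R}$, multiplication by $x_1$ is a valuation shift by $a_1$, giving $v(x_1 R) = a_1 + v(R)$.

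The core step is to establish the identity $v(\cC + x_1R) = v(\cC) \cup v(x_1R)$. The inclusion $\supseteq$ is immediate. For $\subseteq$, I would take any nonzero $\gamma = \alpha + \beta$ with $\alpha \in \cC$, $\beta \in x_1R$, and split into two cases based on whether $v(\gamma) \geq c$. If $v(\gamma) \geq c$, then $v(\gamma) \in v(\cC)$. If $v(\gamma) < c$, then $v(\alpha) \geq c > v(\gamma)$ forces $v(\alpha) \neq v(\beta)$, and the non-archimedean property then yields $v(\gamma) = \min(v(\alpha), v(\beta)) = v(\beta) \in v(x_1R)$.

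Putting these together gives $v(\cC+x_1R) \setminus v(x_1R) = v(\cC) \setminus v(x_1R) = \{n \geq c : n - a_1 \notin v(R)\}$. For $n \geq c+a_1$ we have $n - a_1 \geq c$, which lies in $v(R)$, so the indices that survive are precisely those with $c \leq n \leq c + a_1 - 1$. The shift $n \mapsto n - a_1$ then gives a bijection with $\{m \in [c-a_1, c-1] : m \notin v(R)\}$, which is the claimed count. I do not foresee a serious obstacle: the whole proof is a routine bookkeeping exercise once \Cref{herzog-kunz-length} is invoked. The one place that requires care is the identity $v(\cC + x_1 R) = v(\cC) \cup v(x_1 R)$, since in general the valuation of a sum can exceed the minimum of the summands' valuations; the fact that every element of $\cC$ has valuation at least $c$ is exactly what rules out any problematic cancellation below level $c$.
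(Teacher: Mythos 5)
Your proof is correct and follows essentially the same route as the paper's: both reduce to the Herzog--Kunz count (\Cref{herzog-kunz-length}) and the same shift-by-$a_1$ bookkeeping identifying $\{n\in[c,c+a_1-1] : n-a_1\notin v(R)\}$ with the missing values in $[c-a_1,c-1]$. The only cosmetic difference is that the paper passes to $\cC/(\cC\cap x_1R)$ and discards valuations $\geq c+a_1$ via $\m\cC=x_1\cC\subseteq \cC\cap x_1R$, whereas you work with $(\cC+x_1R)/x_1R$ directly and justify the needed identity $v(\cC+x_1R)=v(\cC)\cup v(x_1R)$ by the ultrametric inequality; both steps are sound.
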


\begin{proof}
Notice that $s(R)=\ell(\cC/(\cC\cap x_1R))$. Hence, we need to count the valuations that are in $\cC$ but missing from the valuation set coming from $\cC\cap x_1R$. Since $\m\cC=x_1\cC\subseteq \cC\cap x_1R$, the only relevant valuations must come from $\cC\setminus \m\cC$. Hence, we can focus on the following generating set of $\cC$: $t^c,\dots, t^{c+a_1-1}$. 

Suppose for some $1\leq i\leq a_1$, $t^{c+a_1-i}=x_1\alpha$ for some $\alpha\in R$. Then we get that $c+a_1-i=a_1+k$ for some $k$ lying in the value group of $R$. Hence, $k=c-i\in [c-a_1,c]$. Thus the missing valuations from $[c-a_1,c-1]$ precisely correspond to the elements in the generating set which cannot be written as a multiple of $x_1$.
\end{proof}

\begin{corollary}\label{redtypestaysunchanged}
We have $s(R)=s(\k\ps{v(R)})$. 
\end{corollary}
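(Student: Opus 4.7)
The plan is to derive this as a direct application of \Cref{redtypeinterpret}, whose content is that the reduced type is completely determined by three pieces of semigroup data: the conductor valuation $c$, the smallest positive valuation $a_1$, and the valuation semigroup $v(R)$ itself. Since all three are intrinsic to $v(R)$, the equality $s(R)=s(\k\ps{v(R)})$ should be formal.

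Concretely, I would first apply \Cref{redtypeinterpret} to $R$ to write
\[
s(R) \;=\; \bigl|\,\{n \in [c-a_1,\,c-1] : n \notin v(R)\}\,\bigr|.
\]
Next, I would verify that $S := \k\ps{v(R)}$ falls under the same setup as in the opening of the section: it is a one-dimensional complete local $\k$-algebra domain with integral closure $\k\ps{t}$, and if $v(R)=\langle b_1,\dots,b_m\rangle$ with $b_1<\dots<b_m$, then $S=\k\ps{t^{b_1},\dots,t^{b_m}}$ so the valuation semigroup of $S$ is exactly $v(R)$. In particular, the smallest positive valuation of $S$ is $b_1=a_1$, and the conductor valuation $c$ of $S$ is characterized by $t^{c-1}\notin S$ and $t^{c+i}\in S$ for $i\geq 0$, a condition depending only on $v(R)$, hence equal to the conductor valuation of $R$.

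Having matched the two triples $(c,a_1,v(R))$ for $R$ and for $S$, the second application of \Cref{redtypeinterpret} to $S$ yields
\[
s(S) \;=\; \bigl|\,\{n \in [c-a_1,\,c-1] : n \notin v(S)\}\,\bigr| \;=\; \bigl|\,\{n \in [c-a_1,\,c-1] : n \notin v(R)\}\,\bigr|,
\]
which equals $s(R)$. There is no real obstacle here beyond bookkeeping; the only point worth stating explicitly is that the invariants $c$ and $a_1$ entering the formula of \Cref{redtypeinterpret} are functions of the semigroup $v(R)$ alone, which is what makes the passage from $R$ to its associated semigroup ring cost-free.
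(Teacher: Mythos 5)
Your proposal is correct and is exactly the paper's argument: the paper also derives the corollary directly from \Cref{redtypeinterpret}, relying on the fact that the conductor valuation, the smallest positive valuation, and the value semigroup of $\k\ps{v(R)}$ coincide with those of $R$. You simply spell out the bookkeeping that the paper leaves implicit.
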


\begin{proof}
    The proof is clear from \Cref{redtypeinterpret}.
\end{proof}


\section{Numerical Semigroup Rings}\label{Section on numerical semigroups}
For a semigroup $H$ minimally generated by $b_1<\cdots<b_m$, we define a complete numerical semigroup $\k\ps{H}=\k\ps{t^{b_1},\dots,t^{b_m}}$. So for the numerical semigroup $v(R)$ appearing in the previous section, we can generate a complete numerical semigroup ring $\k\ps{v(R)}$.

It is a natural question whether our study can be transferred to the study of this numerical semigroup ring. For that we need the interpretations of type and reduced type in terms of valuations. One of the simplest relationships between $R$ and $\k\ps{v(R)}$ is that  their conductor valuations remain the same.

It is well-known that the type of a complete numerical semigroup ring  $\k\ps{H}$ is given by the cardinality of the pseudo-Frobenius set, denoted $\operatorname{PF}(H)$. It is defined as follows:
$$\operatorname{PF}(H)=\{x\in \mathbb{Z}\setminus H\mid x+h\in H \text{~for all~}0\neq h\in H\}.$$ The largest element in this set is called the Frobenius number of $H$, denoted $F(H)$. In our notations, $F(H)=c-1$.

In the previous section we saw that the reduced type is invariant under the study of $R$ versus $\k\ps{v(R)}$. But the type of $R$ is more sensitive through this change of rings. The next result \Cref{typecomparison} which appears in \cite[Proposition II.1.16]{Barucci-Froberg-memoirs}, captures this behaviour. We provide a proof for the convenience of the reader. Also, in the following $|S|$ refers to the cardinality of a set $S$.
\begin{remark}\label{remark on HomI,J}
    We recall that for any two ideals of $R$, we can identify $\Hom_R(I,J)$ with $J:I=\{\alpha\in \operatorname{Frac}(R)\mid \alpha I\subseteq J\}$ via the following: for any non-zero $x,y\in I$ and $f:I\to J$, we have $\frac{f(x)}{x}=\frac{f(y)}{y}$ using $R$-linearity of $f$. Hence, for $0\neq r\in I$, we get $f(r)=r\frac{f(rx)}{rx}=r\frac{f(x)}{x}$, a multiplication by $\frac{f(x)}{x}\in \operatorname{Frac}(R)$.
\end{remark}

\begin{theorem}\label{typecomparison}
We have the inequality $$s(R)\leq \type(R)\leq |\PF(v(R))|=\type(\k\ps{v(R)}).$$
\end{theorem}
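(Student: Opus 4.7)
The leftmost inequality $s(R)\leq \type(R)$ has already been justified in the discussion following the definition of $s(R)$. So the task reduces to proving $\type(R)\leq |\PF(v(R))|$ together with the identification $|\PF(v(R))|=\type(\k\ps{v(R)})$.

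My plan is to rewrite $\type(R)$ as the length of a quotient of fractional ideals, then pass to valuations. Since $x_1$ is a non-zero-divisor, multiplication by $x_1$ induces a $\k$-linear isomorphism
\begin{align*}
\frac{R:_{\operatorname{Frac}R}\m}{R}\ \xrightarrow{\ \cong\ }\ \frac{x_1R:_R\m}{x_1R},
\end{align*}
using the identification from \Cref{remark on HomI,J}: $\alpha \m\subseteq R$ exactly when $x_1\alpha\in x_1R$ and $x_1\alpha\cdot \m\subseteq x_1R$. Hence $\type(R)=\ell\bigl((R:_{\operatorname{Frac}R}\m)/R\bigr)$, and by \Cref{herzog-kunz-length} this length equals the number of values in $v(R:_{\operatorname{Frac}R}\m)\setminus v(R)$.

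Next I would show $v(R:_{\operatorname{Frac}R}\m)\subseteq v(R)\cup \PF(v(R))$. Take $\alpha\in R:_{\operatorname{Frac}R}\m$ with $n=v(\alpha)\notin v(R)$. Observe $v(\m)=v(R)\setminus\{0\}$, since any element of $R$ of positive valuation is a non-unit and hence lies in $\m$. Given any nonzero $h\in v(R)$, choose $m\in\m$ with $v(m)=h$; then $\alpha m\in R$ and so $n+h=v(\alpha m)\in v(R)$. This is exactly the defining condition for $n\in \PF(v(R))$. Therefore every value of $R:_{\operatorname{Frac}R}\m$ that is not already a value of $R$ lies in $\PF(v(R))$, which gives $\type(R)\leq |\PF(v(R))|$.

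Finally, for $R=\k\ps{v(R)}=\k\ps{H}$, I would produce an explicit element witnessing equality. For each $f\in \PF(H)$ and every minimal generator $t^{b_i}$ of $\m$, $f+b_i\in H$ because $b_i>0$, so $t^f\cdot t^{b_i}=t^{f+b_i}\in R$ and hence $t^f\in R:_{\operatorname{Frac}R}\m$. The collection $\{t^f:f\in \PF(H)\}$ injects $\PF(H)$ into $v(R:_{\operatorname{Frac}R}\m)\setminus v(R)$, which combined with the preceding paragraph yields the equality $|\PF(H)|=\type(\k\ps{H})$, completing the proof. The one step requiring care is the identification of $\type(R)$ with the length of $(R:_{\operatorname{Frac}R}\m)/R$; the rest is essentially a translation between module-theoretic and semigroup-theoretic language via \Cref{herzog-kunz-length}.
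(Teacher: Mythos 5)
Your proof is correct, and it reaches the key identity $\type(R)=\ell\bigl((R:_{\Frac R}\m)/R\bigr)$ by a genuinely different and more elementary route than the paper. The paper re-expresses $\type(R)$ as $\ell(\omega_R/\m\omega_R)$, dualizes into $E(\k)$, applies $\Hom_R(-,\omega_R)$ to $0\to\m\omega_R\to\omega_R\to\omega_R/\m\omega_R\to 0$, and invokes local duality to land on $\ell\bigl((\omega_R:\m\omega_R)/(\omega_R:\omega_R)\bigr)=\ell\bigl((R:\m)/R\bigr)$; you instead start from the socle description $\type(R)=\dim_\k\bigl((x_1R:_R\m)/x_1R\bigr)$ already recorded in Section 2 and transport it by the multiplication-by-$x_1$ isomorphism of fractional ideals, bypassing canonical modules and duality entirely. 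From there both arguments coincide: count valuations via \Cref{herzog-kunz-length} and observe $v(R:\m)\setminus v(R)\subseteq\PF(v(R))$. Your approach is shorter and self-contained, and as a bonus you actually prove the equality $|\PF(H)|=\type(\k\ps{H})$ by exhibiting $t^f\in(R:\m)$ for each $f\in\PF(H)$, whereas the paper cites this as well known; the paper's duality argument buys nothing extra here beyond fitting into a more general framework. One small slip to fix: your parenthetical ``$\alpha\m\subseteq R$ exactly when $x_1\alpha\in x_1R$ and $x_1\alpha\cdot\m\subseteq x_1R$'' is not an equivalence as stated, since $x_1\alpha\in x_1R$ forces $\alpha\in R$, which elements of $(R:_{\Frac R}\m)\setminus R$ do not satisfy (e.g.\ $\alpha=t^2$ for $R=\k\ps{t^3,t^4,t^5}$). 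The correct justification is that $\alpha\m\subseteq R$ implies $x_1\alpha\in R$ (because $x_1\in\m$) and $x_1\alpha\m\subseteq x_1R$, so $\alpha\mapsto x_1\alpha$ carries $R:_{\Frac R}\m$ into $x_1R:_R\m$, sends $R$ onto $x_1R$, and is inverted by division by $x_1$; this repairs the sentence without affecting anything downstream.
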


\begin{proof}
    The first inequality follows from our discussion of reduced type above. For the second one, we make the following observations. Let $\omega_R$ be a canonical ideal of $R$. Then $\type(R)=\ell(\omega_R/\m\omega_R)=\ell(\Hom(\omega_R/\m\omega_R,E(\k)))$,  where $E(\k)$ is the injective hull of $\k$. The latter equality is due to \cite[Proposition 3.2.12(b)]{bruns_herzog_1998} while the former is due to \cite[Theorem 3.3.11(c)]{bruns_herzog_1998}. Applying $\Hom_R(\cdot, \omega_R)$ to the exact sequence $$0\to \m\omega_R\to \omega_R\to \omega_R/\m\omega_R\to 0,$$ we obtain $$0\to \Hom_R(\omega_R,\omega_R)\to \Hom_R(\m\omega_R, \omega_R)\to \Ext^1_R(\omega_R/\m\omega_R,\omega_R)\to 0$$ where we used the facts that $\Hom_R(M,N)=0$ if $M$ is torsion and $N$ is torsion-free, and also that $\Ext^1_R(\omega_R,\omega_R)=0$ (\cite[Theorem 3.3.10(c)(iii)]{bruns_herzog_1998}) Now using local duality \cite[Theorem 3.5.8]{bruns_herzog_1998}, we get that $\Ext^1_R(\omega_R/\m\omega_R, \omega_R)\cong \Hom_R(H^0_\m(\omega_R/\m\omega_R),E(\k))=\Hom(\omega_R/\m\omega_R,E(\k))$. Thus $$\type(R)=\ell\Big(\frac{\Hom_R(\m\omega_R,\omega_R)}{\Hom_R(\omega_R, \omega_R)}\Big).$$ Using the identification with colons as discussed before the theorem, we get that 
    \begin{align*}
    \type(R)&=\ell\Big(\frac{\omega_R:\m\omega_R}{\omega_R:\omega_R}\Big)\\&=\ell\Big(\frac{(\omega_R:\omega_R):\m}{\omega_R:\omega_R}\Big)=\ell\Big(\frac{R:\m}{R}\Big).    
    \end{align*}    
    The first equality follows from Remark \ref{remark on HomI,J} while the last equality follows from \cite[Proposition 3.3.11(c)(ii)]{bruns_herzog_1998}. Then $\ell(R:\m/R)=|\left\{v(R:\m)\setminus v(R)\right\}|$. Since $v(R:\m)\subseteq \{x\in \mathbb{Z}\mid x+h\in v(R) \text{~for all~} h\in v(R)\}$, we have finished the proof.
\end{proof}

Thus the proof above shows that the type of $R$ and the type of $\k\ps{v(R)}$ need not be the same. In fact, here is a concrete example which reflects this. 

\begin{example}\label{type may increase}\cite[Example II.1.19]{Barucci-Froberg-memoirs}
    Let $R=\k\ps{t^4,t^6+t^7,t^{11}}$. Then \\$v(R)=\{0,4,6,8,10,11,12,13,14,....\}$ (we used that $t^{10}=t^4(t^6+t^7)-t^{11}\in R$ and $t^{13}=(t^6+t^7)^2-(t^4)^3-t^{10}t^4\in R$). Also, $v(R:\m)=\{0,4,6,7,8,9,...\}$. Thus, $\type(R)=|v(R:\m)\setminus v(R)|=2$. However, $\operatorname{PF}(v(R))=\{2,7,9\}$.
\end{example}


\begin{proposition}\label{transitiontosemigroups} The following statements hold.
\begin{enumerate}
\item The ring $R$ is of minimal reduced type if and only if $\k\ps{v(R)}$ is of minimal reduced type.
   \item The ring $R$ is of maximal reduced type if $\k\ps{v(R)}$ is of maximal reduced type. The converse holds if additionally $\type(R)=\type(\k\ps{v(R)})$.
   \end{enumerate}
\end{proposition}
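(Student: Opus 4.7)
The plan is to derive both parts from the single chain of inequalities
\[s(R) \;=\; s(\k\ps{v(R)}) \;\leq\; \type(R) \;\leq\; |\PF(v(R))| \;=\; \type(\k\ps{v(R)}),\]
obtained by combining Corollary \ref{redtypestaysunchanged} with Theorem \ref{typecomparison}. Once this chain is in place, the proposition becomes a matter of tracking when the various inequalities collapse to equalities.

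For part (1), I would simply observe that minimal reduced type amounts to $s(R)=1$ (respectively $s(\k\ps{v(R)})=1$), and the equality $s(R)=s(\k\ps{v(R)})$ supplied by Corollary \ref{redtypestaysunchanged} transfers this condition verbatim between the two rings. No information about the type is needed for this direction, since the lower bound $1\leq s(\cdot)$ is automatic from the fact that the conductor is not contained in any proper principal ideal.

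For part (2), the sufficiency (``if'') direction is immediate: assuming $s(\k\ps{v(R)})=\type(\k\ps{v(R)})$ and plugging into the displayed chain forces all four quantities to coincide, and in particular $s(R)=\type(R)$. For the converse, I would start from $s(R)=\type(R)$ and add the hypothesis $\type(R)=\type(\k\ps{v(R)})$; combining these two equalities with $s(R)=s(\k\ps{v(R)})$ again collapses the entire chain and yields $s(\k\ps{v(R)})=\type(\k\ps{v(R)})$.

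The argument itself offers no serious obstacle, since Theorem \ref{typecomparison} already does the heavy lifting. What is worth emphasizing is the role of the extra hypothesis in the converse of (2): Example \ref{type may increase} shows that $\type(R)$ can strictly undercount $|\PF(v(R))|$, and in that regime the middle inequality in the chain is strict, so $R$ could in principle have maximal reduced type without $\k\ps{v(R)}$ doing so. This is precisely why the converse of (2) cannot be stated unconditionally, and it is the one subtlety in the proof that deserves explicit mention.
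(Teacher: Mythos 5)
Your proof is correct and follows exactly the paper's route: the paper's own proof of this proposition is a two-line appeal to Corollary \ref{redtypestaysunchanged} for part (1) and to Theorem \ref{typecomparison} together with Corollary \ref{redtypestaysunchanged} for part (2), which is precisely the chain-collapse argument you spell out. Your closing remark about Example \ref{type may increase} correctly identifies why the extra hypothesis in the converse of (2) is needed, matching the example the paper gives immediately after the proposition.
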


\begin{proof}
    The case of minimal reduced type (i.e., when $s(R)=1$) follows from \Cref{redtypestaysunchanged}. For (2), we use \Cref{typecomparison} and \Cref{redtypestaysunchanged}. 
\end{proof}

The converse of Statement (2) in \Cref{transitiontosemigroups} need not hold without the added assumption. The following example illustrates this.

\begin{example}
Let $R$ be as in \Cref{type may increase}. Then from the description of $v(R)$, we see that $c=10$. Thus from \Cref{redtypeinterpret}, we get that $s(R)=2$ since $\{7,9\}=[10-4,10-1]\setminus v(R)$. Thus $R$ is of maximal reduced type but $\k\ps{v(R)}$ is not since its type is $3$ whereas reduced type is $2$.
\end{example}

\begin{corollary}
    Assume $R$ is of minimal multiplicity. Then $R$ is of maximal (resp. minimal) reduced type if and only if $\k\ps{v(R)}$ is of maximal (resp. minimal) reduced type. 
\end{corollary}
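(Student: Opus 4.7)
The plan is to reduce the corollary to \Cref{transitiontosemigroups}: part (1) already yields the minimal reduced type equivalence without any multiplicity hypothesis, so the real content lies in the maximal reduced type direction. By \Cref{transitiontosemigroups}(2), this will follow once I show $\type(R) = \type(\k\ps{v(R)})$ whenever $R$ has minimal multiplicity. My strategy is to prove that both quantities equal $a_1 - 1$.

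First I would use the equivalence of minimal multiplicity with $\m^2 = x_1\m$, or equivalently $x_1 R :_R \m = \m$, already established in the proof of \Cref{lengthconditions}. Combining this with the elementary identification $R:\m = \tfrac{1}{x_1}(x_1 R :_R \m)$ yields the clean formula $R:\m = \tfrac{1}{x_1}\m$. Taking valuations,
\[
v(R:\m) \;=\; \{h - a_1 : h \in v(R),\ h > 0\},
\]
and then the type formula in the proof of \Cref{typecomparison} combined with \Cref{herzog-kunz-length} gives
\[
\type(R) \;=\; |v(R:\m)\setminus v(R)| \;=\; \bigl|\{h \in v(R) : h > 0,\ h - a_1 \notin v(R)\}\bigr| \;=\; a_1 - 1,
\]
the last equality because this set is the nonzero part of the Ap\'ery set of $v(R)$ relative to $a_1$, which contains exactly one element in each nonzero residue class modulo $a_1$.

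Next I would bound the semigroup side via the standard inequality $|\PF(H)| \leq m(H) - 1$ for numerical semigroups $H \neq \mathbb{N}$; this is realized by the injection $x \mapsto x + a_1$ from $\PF(v(R))$ into the Ap\'ery set of $v(R)$ relative to $a_1$, noting that $0$ is not in the image since $-a_1 \in \PF(v(R))$ would force $v(R) = a_1\mathbb{N}$ and hence $R = \k\ps{x_1}$ a DVR, contradicting non-regularity. Combined with \Cref{typecomparison},
\[
a_1 - 1 \;=\; \type(R) \;\leq\; |\PF(v(R))| \;=\; \type(\k\ps{v(R)}) \;\leq\; a_1 - 1,
\]
forcing equality throughout, and \Cref{transitiontosemigroups}(2) completes the maximal reduced type case.

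The main step requiring care is the clean identification $R:\m = \tfrac{1}{x_1}\m$ under minimal multiplicity; once that is in hand, the rest reduces to a routine Ap\'ery set count. The only mild subtlety is ruling out $-a_1 \in \PF(v(R))$, which is a short regularity-exclusion argument, so I do not foresee a serious obstacle.
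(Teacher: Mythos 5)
Your argument is correct and follows essentially the same route as the paper: both proofs sandwich $\type(\k\ps{v(R)})$ between $\type(R)$ and $e(R)-1$ using \Cref{typecomparison} together with the fact that minimal multiplicity forces $\type(R)=e(R)-1$, and then invoke \Cref{transitiontosemigroups}(2). The only difference is that you re-derive the two standard ingredients ($\type(R)=a_1-1$ under minimal multiplicity, and $|\PF(v(R))|\leq a_1-1$) by explicit colon-ideal and Ap\'ery-set counting, whereas the paper simply cites them as well known.
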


\begin{proof}
It is well-known that for a Cohen-Macaulay local ring with $e(R)>1$, $e(R)\geq \type(R)+1$ and $R$ is of minimal multiplicity if and only if $e(R)=\type(R)+1$ \cite[Fact 2.6]{herzog2021tiny}. 

Since $e(R)=e(\k\ps{v(R)})$, we get that $e(\k\ps{v(R)})=\type(R)+1\leq \type(\k\ps{v(R)})+1$ using \Cref{typecomparison}. But $\type(\k\ps{v(R)})+1\leq e(\k\ps{v(R)})$ as discussed before. 

Thus, $\type(v(R))=\type(\k\ps{v(R)})$ and now \Cref{transitiontosemigroups} finishes the proof.
\end{proof}

The above discussions show that the study of maximal or minimal reduced type to complete numerical semigroup rings gives us information on the maximal or minimal reduced type for rings $R$.

\noindent \textbf{About the notation for $R$}: In the sequel, we will be studying the maximal and minimal reduced type for complete numerical semigroup rings. Essentially, as the above results suggest, studying the properties for $\k\ps{v(R)}$ gives us information on the properties for $R$. Now notice that $a_1,\dots,a_n$ may not be a generating set for $v(R)$ even though $R=\k\ps{\alpha_1t^{a_1},\dots,\alpha_nt^{a_n}}$. For example, for the ring $R=\k\ps{t^4,t^6+t^7,t^{11}}$, the complete numerical semigroup ring is $\k\ps{t^4,t^6,t^{11},t^{13}}$ as opposed to $\k\ps{t^4,t^6,t^{11}}$. Thus to avoid any confusion, we will be using a more general notation $\k\ps{H}$ for a complete numerical semigroup as opposed to $\k\ps{v(R)}$.

\begin{proposition}\label{mainpropo}
For a numerical semigroup $H$, the ring $S=\k\ps{H}$ has maximal reduced type if and only if $\min_{x\in \operatorname{PF}(H)} x\geq c-b_1$.
\end{proposition}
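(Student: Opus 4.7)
The plan is to assemble two pieces already developed in the paper: the description of $s(S)$ as a count of gaps in a short interval below $c$ (Theorem \ref{redtypeinterpret}), and the description $\type(S)=|\PF(H)|$ for a complete numerical semigroup ring (which is the equality case of Theorem \ref{typecomparison}). The whole argument then reduces to a single set-theoretic inclusion.

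First I would recall that by Theorem \ref{redtypeinterpret},
\[
s(S)=\big|[c-b_1,c-1]\setminus H\big|,
\]
and that for $S=\k\ps{H}$ the inequality of Theorem \ref{typecomparison} is in fact an equality, i.e. $\type(S)=|\PF(H)|$. The reason is that the containment $v(S:\m)\subseteq\{x:x+h\in H\text{ for all }0\neq h\in H\}$ used in the proof of Theorem \ref{typecomparison} becomes equality for a complete numerical semigroup ring: if $x$ satisfies $x+h\in H$ for every nonzero $h\in H$, then $t^{x}\cdot t^{b_i}\in S$ for each generator $t^{b_i}$ of $\m$, so $t^x\in S:\m$, giving $x\in v(S:\m)$. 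Therefore $\type(S)=|v(S:\m)\setminus v(S)|=|\PF(H)|$.

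Next, the key observation is the inclusion
\[
[c-b_1,c-1]\setminus H\;\subseteq\;\PF(H).
\]
Indeed, if $x$ is a gap with $c-b_1\leq x\leq c-1$, then for every nonzero $h\in H$ we have $h\geq b_1$, so $x+h\geq (c-b_1)+b_1=c$, which forces $x+h\in H$. Hence $x\in\PF(H)$. This inclusion immediately reproves the inequality $s(S)\leq\type(S)$ from \Cref{typecomparison} in the semigroup-ring case, and it identifies exactly when equality holds.

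Finally, $s(S)=\type(S)$ if and only if the above inclusion is an equality, i.e. $\PF(H)\subseteq [c-b_1,c-1]$. Since the upper bound $x\leq c-1$ is automatic (every pseudo-Frobenius number is a gap, and the largest gap is $F(H)=c-1$), this condition is equivalent to $\min_{x\in\PF(H)}x\geq c-b_1$, proving the proposition. I do not anticipate a genuine obstacle here; the only point that requires a line of justification beyond quoting earlier results is the equality $\type(\k\ps{H})=|\PF(H)|$, and this follows routinely from the argument given for Theorem \ref{typecomparison}.
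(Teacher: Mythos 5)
Your proof is correct and follows essentially the same route as the paper's: both rest on the inclusion $[c-b_1,c-1]\setminus H\subseteq\PF(H)$ together with the identifications $s(S)=\bigl|[c-b_1,c-1]\setminus H\bigr|$ and $\type(S)=|\PF(H)|$, and then observe that maximal reduced type is equivalent to that inclusion being an equality, i.e.\ to $\min_{x\in\PF(H)}x\geq c-b_1$. The only difference is that you spell out the one-line verification of the inclusion and of $\type(\k\ps{H})=|\PF(H)|$, which the paper leaves as ``clear'' and ``well-known'' respectively.
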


\begin{proof}It is clear that $[c-b_1,c-1]\setminus v(S)\subseteq \operatorname{PF(H)}$ by the definition of the latter. (This is another way of seeing in this case that $s(S)\leq \type(S)$). Thus, $S$ is of maximal reduced type if and only if $\operatorname{PF}(H)=[c-b_1,c-1]\setminus v(S)$ if and only if $\min_{x\in \operatorname{PF}(H)} x\geq c-b_1$.
\end{proof}

\begin{proposition}\label{minredtypemainprop}
For a numerical semigroup $H$, the ring $S=k\ps{H}$ is of minimal reduced type if and only if $\ds \max_{x\in \{\PF(H)\setminus \{c-1\}} x < c-b_1$.
\end{proposition}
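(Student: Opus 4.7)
The plan is to use Theorem \ref{redtypeinterpret}, which says that $s(S)$ equals the number of elements of $[c-b_1, c-1]$ missing from $H = v(S)$. Since $c-1$ is the Frobenius number $F(H)$, it is always a gap of $H$, so it contributes one to this count. Therefore $s(S) = 1$ if and only if
\[
[c-b_1, c-1] \setminus H = \{c-1\},
\]
i.e., $c-b_1, c-b_1+1, \ldots, c-2$ all lie in $H$.

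Next, I would recall the containment used in the proof of Proposition \ref{mainpropo}:
\[
[c-b_1, c-1] \setminus H \subseteq \PF(H).
\]
Indeed, any $x$ in the left-hand side is a gap, and for any $0 \ne h \in H$ we have $h \geq b_1$, so $x+h \geq c$ and hence $x+h \in H$. Combined with the fact that $c-1 \in \PF(H)$ always, this is the bridge linking the gap description of $s(S)$ to pseudo-Frobenius numbers.

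Then I would prove both directions. For the forward direction, suppose $s(S) = 1$. Then $[c-b_1, c-2] \subseteq H$. Since $\PF(H) \cap H = \emptyset$ by definition, no element of $\PF(H)$ can lie in $[c-b_1, c-2]$. Because every element of $\PF(H)$ is $\leq c-1$, this forces any $x \in \PF(H) \setminus \{c-1\}$ to satisfy $x < c-b_1$, giving $\max_{x \in \PF(H) \setminus \{c-1\}} x < c-b_1$. For the backward direction, assume this max is strictly less than $c-b_1$ (taking the convention that the condition is vacuous when $\PF(H) = \{c-1\}$, which is the symmetric/Gorenstein case and forces $s(S) = 1$ automatically). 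Then $\PF(H) \cap [c-b_1, c-1] \subseteq \{c-1\}$. By the containment above, $[c-b_1, c-1] \setminus H \subseteq \{c-1\}$, and since $c-1 \notin H$, equality holds. Thus $s(S) = 1$.

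The argument is quite direct; there is no real obstacle once Theorem \ref{redtypeinterpret} and the containment $[c-b_1, c-1] \setminus H \subseteq \PF(H)$ are in hand. The only small care needed is the Gorenstein edge case where $\PF(H) \setminus \{c-1\}$ is empty, which should be treated with the usual vacuous-max convention so that the equivalence remains sensible.
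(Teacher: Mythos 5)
Your proof is correct and follows essentially the same route as the paper: it combines Theorem \ref{redtypeinterpret} with the containment $[c-b_1,c-1]\setminus H\subseteq \PF(H)$ already noted in Proposition \ref{mainpropo}, which is exactly what the paper's (much terser) argument does. Your version simply spells out both directions and the Gorenstein edge case explicitly, which is a reasonable elaboration rather than a different method.
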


\begin{proof}
    It is clear that $s(S)=1$ if and only if $c-1$ is the only integer in the interval $[c-b_1,c-1]$ that should be missing from $v(S)$. Hence, all the other elements in $\PF(H)$ must be strictly less than $c-b_1$. 
\end{proof}

\begin{remark}
    In fact, it is clear that if $h\in H$, then $c-1-h\not \in \PF(H)$. Hence, the above statement can also be changed to minimal reduced type if and only if $\ds \max_{x\in \{\PF(H)\setminus \{c-1\}} x \leq  c-b_1-2$. 
\end{remark}
\begin{corollary}\label{minmultcrit}
For a numerical semigroup $H$, the ring, suppose $S=\k\ps{H}$ has minimal multiplicity. Then $S$ is of maximal reduced type if and only if $b_2\geq c$.
\end{corollary}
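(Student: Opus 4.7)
The plan is to reduce everything to Proposition \ref{mainpropo}, which already tells us that $S=\k\ps{H}$ has maximal reduced type if and only if $\min_{x\in\PF(H)}x\geq c-b_1$. So the task is to compute $\min\PF(H)$ explicitly under the minimal multiplicity hypothesis and show it equals $b_2-b_1$; the corollary then drops out as the inequality $b_2-b_1\geq c-b_1$.

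First I would recall the standard description of $\PF(H)$ via the Apéry set $\operatorname{Ap}(H,b_1)=\{h\in H: h-b_1\notin H\}$, namely $\PF(H)=\{w-b_1 : w \text{ is maximal in }\operatorname{Ap}(H,b_1) \text{ with respect to }\leq_H\}$, where $a\leq_H b$ means $b-a\in H$. Under minimal multiplicity we have $\edim S=e(S)=b_1$, so $H$ has exactly $b_1$ minimal generators $b_1<b_2<\cdots<b_{b_1}$. Since $\operatorname{Ap}(H,b_1)$ always has cardinality $b_1$, and since every non-zero minimal generator $b_i$ ($i\geq 2$) satisfies $b_i-b_1\notin H$ (otherwise $b_i$ would decompose in $H$, violating minimality), we get $\operatorname{Ap}(H,b_1)=\{0,b_2,\ldots,b_{b_1}\}$.

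Next I would argue that $b_2$ is $\leq_H$-maximal among these Apéry elements. Indeed, if $b_j-b_2\in H$ for some $j>2$, then $b_j-b_2\in H\setminus\{0\}$, and writing $b_j=b_2+(b_j-b_2)$ would exhibit $b_j$ as a sum of two non-zero elements of $H$, contradicting the fact that $b_j$ is a minimal generator. Hence $b_2-b_1\in\PF(H)$, and since $b_2$ is the smallest non-zero element of the Apéry set, $b_2-b_1=\min\PF(H)$.

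Plugging this into Proposition \ref{mainpropo}, maximal reduced type becomes $b_2-b_1\geq c-b_1$, i.e., $b_2\geq c$. I don't anticipate any real obstacle here; the only subtlety is the minimal-generator argument showing $b_2$ is maximal in the Apéry set, which avoids the (false in general) claim that $b_j-b_2<b_1$.
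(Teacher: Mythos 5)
Your proof is correct and follows essentially the same route as the paper: both reduce to Proposition \ref{mainpropo} after identifying $\min\PF(H)=b_2-b_1$ under minimal multiplicity. The only difference is that you prove the needed facts about the Ap\'ery set and $\PF(H)$ from scratch, whereas the paper simply cites them from Rosales--Garc\'ia-S\'anchez (Propositions 2.20 and 3.31); your inline argument, including the minimal-generator reasoning showing $b_2$ is $\leq_H$-maximal, is sound.
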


\begin{proof}
Since $S$ has minimal multiplicity, $$\operatorname{PF}(H)=\{b_2-b_1, b_3-b_1,\dots, b_n-b_1\};$$ this follows from \cite[Proposition 2.20, Proposition 3.31]{rosales2009numerical} Thus, $S$ has maximal reduced type if and only if $b_2-b_1\geq c-b_1$ which completes the proof. 
\end{proof}
\begin{corollary}\label{minmultminredtype}
Suppose $S=\k\ps{H}$ has minimal multiplicity. Then $S$ is of minimal reduced type if and only if $b_{n-1}+b_1-1 < b_n$.    
\end{corollary}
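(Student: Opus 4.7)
The plan is to reduce the statement to Proposition \ref{minredtypemainprop} by using the same explicit description of $\PF(H)$ that was invoked in the proof of Corollary \ref{minmultcrit}. First I would recall that structural input: under minimal multiplicity the pseudo-Frobenius set is
\[
\PF(H) = \{b_2 - b_1,\ b_3 - b_1,\ \ldots,\ b_n - b_1\},
\]
so in particular the Frobenius number is $c - 1 = b_n - b_1$, whence $c = b_n - b_1 + 1$.

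Next I would remove the Frobenius element to obtain
\[
\PF(H) \setminus \{c-1\} \;=\; \{b_2 - b_1,\ \ldots,\ b_{n-1} - b_1\},
\]
whose maximum (for $n \geq 3$) is $b_{n-1} - b_1$. Applying Proposition \ref{minredtypemainprop}, the condition for $S$ to be of minimal reduced type becomes
\[
b_{n-1} - b_1 \;<\; c - b_1 \;=\; b_n - 2b_1 + 1,
\]
which, since the quantities involved are integers, rearranges immediately to $b_{n-1} + b_1 - 1 < b_n$, as claimed.

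The main obstacle here is essentially nothing, since the heavy lifting has already been carried out: Proposition \ref{minredtypemainprop} supplies the pseudo-Frobenius criterion for minimal reduced type, and the reference to \cite{rosales2009numerical} used in Corollary \ref{minmultcrit} supplies the explicit form of $\PF(H)$ under minimal multiplicity. The proof is therefore pure bookkeeping once these two ingredients are combined. The only subtlety worth flagging is the degenerate case $n = 2$ (which forces $b_1 = 2,\ b_2 = 3$ and $S = \k\ps{t^2, t^3}$), where $\PF(H) \setminus \{c-1\}$ is empty and $S$ is automatically Gorenstein; this case should either be handled separately or read with the convention that an empty maximum is $-\infty$, so that the condition of Proposition \ref{minredtypemainprop} holds vacuously.
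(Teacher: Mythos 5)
Your argument is correct and is essentially identical to the paper's own proof: both invoke the explicit description $\PF(H)=\{b_2-b_1,\dots,b_n-b_1\}$ under minimal multiplicity, identify $c=b_n-b_1+1$, and apply \Cref{minredtypemainprop} to the largest non-Frobenius pseudo-Frobenius number $b_{n-1}-b_1$. Your extra remark on the degenerate case $n=2$ is a reasonable (if minor) addition that the paper omits.
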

\begin{proof}
Since $S$ has minimal multiplicity, $$\operatorname{PF}(H)=\{b_2-b_1, b_3-b_1,\dots, b_n-b_1\};$$ this follows from \cite[Proposition 2.20, Proposition 3.31]{rosales2009numerical}. Thus, $S$ has minimal reduced type if and only if $b_{n-1}-b_1 < c-b_1$. But notice that $c-1=b_n-b_1$ or $c=b_n-b_1+1$. Thus $S$ has minimal reduced type if and only if 
\begin{align*}
    b_{n-1}-b_1 < c-b_1=b_n-b_1+1-b_1=b_n-2b_1+1.
\end{align*}
\end{proof}
In general, getting a complete classification of numerical semi-group rings of maximal or minimal reduced type seems tricky. We however can describe the condition abstractly using the notion of Ap\'ery sets as well.

Recall that given $h\in H$, the Ap\'ery set of $h$ in $H$ is defined to be $$\operatorname{Ap}(H,h):=\{s\in H\mid s-h\not \in H\}.$$ One defines an order relation over the set of integers as follows: $a\leq_H b$ if $b-a\in H$. Let $M(h)=\operatorname{Maximal}_{\leq_H}\operatorname{Ap}(H,h)$. 

\begin{proposition}
Suppose $S=\k\ps{H}$ is a complete numerical semi-group ring as above. Then $S$ is of maximal reduced type if and only if $$\min_{w\in M(e(R))}w\geq (\max \operatorname{Ap}(H,e(R))-e(R)+1.$$
\end{proposition}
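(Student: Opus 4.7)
The plan is to reduce the claim to Proposition \ref{mainpropo} via two standard facts about Apéry sets. First, I would recall the well-known identification
$$\operatorname{PF}(H) = \{w - n : w \in M(n)\}$$
valid for any nonzero $n \in H$ (this is essentially the content of \cite[Proposition 2.20]{rosales2009numerical}, already cited in the excerpt). Specializing to $n = e(R) = b_1$ gives
$$\min_{x \in \operatorname{PF}(H)} x \;=\; \min_{w \in M(b_1)} w \;-\; b_1.$$

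Second, I would verify that $\max \operatorname{Ap}(H, b_1) = F(H) + b_1 = c - 1 + b_1$. Indeed, $F(H) + b_1$ lies in $\operatorname{Ap}(H, b_1)$ because $F(H) + b_1 \in H$ (every integer strictly greater than $F(H)$ is in $H$) while $F(H) \notin H$; conversely, any $w \in \operatorname{Ap}(H, b_1)$ satisfies $w - b_1 \notin H$, hence $w - b_1 \leq F(H)$. Consequently,
$$\max \operatorname{Ap}(H, b_1) - b_1 + 1 \;=\; c.$$

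Combining these two computations, the stated inequality $\min_{w \in M(b_1)} w \geq \max \operatorname{Ap}(H, b_1) - b_1 + 1$ is equivalent to $\min_{w \in M(b_1)} w \geq c$, which by the first identity is equivalent to $\min_{x \in \operatorname{PF}(H)} x \geq c - b_1$. Proposition \ref{mainpropo} now gives the equivalence with $S$ being of maximal reduced type.

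There is no real obstacle here; the argument is essentially a translation between three equivalent languages (the pseudo-Frobenius set, the maximal Apéry-set elements relative to multiplicity, and the gap-counting description of $s(S)$ from \Cref{redtypeinterpret}). The only point to be careful about is ensuring that the shift $w \mapsto w - b_1$ is applied correctly on both the minimum side and the maximum side, and that $F(H) = c - 1$ is used consistently with the convention fixed earlier in the paper.
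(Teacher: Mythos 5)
Your proposal is correct and follows essentially the same route as the paper: identify $\operatorname{PF}(H)$ with $\{w-e(R)\mid w\in M(e(R))\}$, identify $\max \operatorname{Ap}(H,e(R))-e(R)+1$ with the conductor $c$, and conclude by Proposition \ref{mainpropo}. The only difference is that you verify the Apéry-set formula for $c$ directly where the paper cites \cite[Proposition 2.12]{rosales2009numerical}, and your verification is correct.
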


\begin{proof}
By \cite[Proposition 2.20]{rosales2009numerical}, we get that $\operatorname{PF}(H)=\{w-e(R)\mid w\in M(e(R))\}$. Furthermore, by \cite[Proposition 2.12]{rosales2009numerical}, we have that $c=(\max \operatorname{Ap}(H,e(R))-e(R)+1$. The proof is now complete by \Cref{mainpropo}.
\end{proof}

\subsection{Almost Gorenstein Rings}
\begin{definition}\cite[Definition-Proposition 20]{Barucci-Froberg97}
The ring $R$ is called almost Gorenstein if $$\ell(\overline{R}/R)=\ell(R/\cC)+\type(R)-1.$$
    
\end{definition} 
This class of rings has recently attracted a lot of attention. See for instance \cite{goto2013almost}, \cite{goto2015almost}, \cite{herzog2019trace}, \cite{DAO2021106655}, etc. among other sources.

The characterization of almost Gorenstein property for numerical semigroup rings due to Nari is given below.
\begin{theorem}{\cite[Theorem 2.4]{nari2013symmetries}}\label{nari theorem on almost GOrenstein}
    Suppose $H$ is a numerical semigroup  with pseudo-Frobenius set $\PF(H)=\{f_1,\dots,f_{t}\}$ and $\k\ps{H}$ is a numerical semigroup ring. Then $H$ is almost Gorenstein if and only if $f_i+f_{t-i}=c-1$ for $1\leq i\leq c-1$.
\end{theorem}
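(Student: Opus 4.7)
The plan is to translate the length identity defining almost Gorenstein into a combinatorial symmetry on the gaps of $H$, and then to extract the pseudo-Frobenius symmetry through the involution $\phi(x) = c - 1 - x$ on $\{0, 1, \dots, c-1\}$. By \Cref{herzog-kunz-length}, $\ell(\overline{R}/R) = |G|$ where $G = \NN \setminus H$ is the gap set, and $\ell(R/\cC) = |H \cap [0, c-1]|$; since these quantities sum to $c$, writing $L = G \cap [0, c-1]$ and $L' = H \cap [0, c-1]$, the defining identity $\ell(\overline{R}/R) = \ell(R/\cC) + t - 1$ rewrites as $|L| - |L'| = t - 1$, where $t = |\PF(H)|$.

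Next I would analyze $\phi$. Since $c - 1$ is the Frobenius number of $H$, every $h \in L' \setminus \{0\}$ satisfies $\phi(h) \in L$, for otherwise $h + (c-1-h) = c - 1$ would lie in $H$. Setting $N = \{x \in L \mid \phi(x) \in L\}$ produces the disjoint decomposition $L = \phi(L') \sqcup N$, hence $|L| = |L'| + |N|$, and the almost Gorenstein condition becomes $|N| = t - 1$. The inclusion $\PF(H) \setminus \{c-1\} \subseteq N$ always holds: for $f \in \PF(H)$ with $f < c - 1$, if $c - 1 - f$ were in $H$, then the pseudo-Frobenius property would force $f + (c-1-f) = c - 1 \in H$, a contradiction. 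This gives $|N| \geq t - 1$ unconditionally, so the theorem is equivalent to the equality $N = \PF(H) \setminus \{c-1\}$.

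The forward direction is then immediate: if $|N| = t - 1$, then $N = \{f_1 < \dots < f_{t-1}\}$, and since $\phi$ restricts to an order-reversing involution of $N$, it must send $f_i$ to $f_{t-i}$, yielding $f_i + f_{t-i} = c - 1$. For the converse, let $x \in N$ and pick $f_j \in \PF(H)$ with $f_j - x \in H \cup \{0\}$; such an $f_j$ exists by choosing a $\leq_H$-maximal element of the finite nonempty set $\{y \in G \mid y - x \in H \cup \{0\}\}$, a standard argument showing any such maximal $y$ is pseudo-Frobenius. If $f_j = x$, then $x \in \PF(H)$ and we are done. Otherwise $f_j - x$ is a positive element of $H$; the case $j = t$ gives $c - 1 - x = f_t - x \in H$, contradicting $x \in N$, while the case $j < t$ uses the symmetry hypothesis to identify $c - 1 - f_j = f_{t-j} \in \PF(H)$, and applying the pseudo-Frobenius property to $f_j - x \in H$ yields $(c - 1 - f_j) + (f_j - x) = c - 1 - x \in H$, again contradicting $x \in N$. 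Hence $N \subseteq \PF(H) \setminus \{c-1\}$ and equality follows.

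The step I expect to require the most care is the maximality argument at the start of the converse, producing the dominating pseudo-Frobenius element $f_j$ for an arbitrary $x \in N$; everything else is counting and routine manipulations with $\phi$. As a sanity check, the argument recovers Kunz's symmetric semigroup theorem when $t = 1$, in which case $\PF(H) = \{c-1\}$, the set $N$ is empty, and the indexed identity is vacuous.
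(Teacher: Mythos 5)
Your proof is correct. Note that the paper itself gives no argument here — the theorem is imported verbatim from Nari \cite[Theorem 2.4]{nari2013symmetries} — so there is nothing internal to compare against; your route via the involution $x\mapsto c-1-x$ on $[0,c-1]$, the decomposition $L=\phi(L')\sqcup N$, and the identification of the almost Gorenstein length identity with $|N|=t-1$ is essentially the standard (Nari-style) argument, and each step, including the $\leq_H$-maximality construction of the dominating pseudo-Frobenius element in the converse, checks out. One cosmetic point: the range ``$1\leq i\leq c-1$'' in the quoted statement is a typo for ``$1\leq i\leq t-1$'', which is the version you (correctly) prove.
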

\begin{proposition}
    Assume $R$ is almost Gorenstein. Then $R$ is of maximal (resp. minimal) reduced type if and only if $k\ps{v(R)}$ is of maximal (resp. minimal) reduced type. 
\end{proposition}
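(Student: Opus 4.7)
The strategy is to reduce both claims to \Cref{transitiontosemigroups} together with the Barucci--Fr\"oberg description of almost Gorenstein rings recalled in the introduction. For minimal reduced type, part~(1) of \Cref{transitiontosemigroups} already gives the equivalence between $s(R)=1$ and $s(\k\ps{v(R)})=1$ with no additional hypotheses on $R$, so nothing further needs to be done in that case.

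For maximal reduced type, part~(2) of \Cref{transitiontosemigroups} gives the implication ``$\k\ps{v(R)}$ is of maximal reduced type $\Rightarrow$ $R$ is of maximal reduced type'' for free, and the converse implication once we know $\type(R)=\type(\k\ps{v(R)})$. The plan is therefore to verify this numerical equality under the almost Gorenstein hypothesis on $R$. By the Barucci--Fr\"oberg characterization, if $R$ is almost Gorenstein then the semigroup $v(R)$ is almost symmetric and $\type(R)=\type(v(R))$. Since $\type(v(R))=|\PF(v(R))|$ by definition of the Cohen--Macaulay type of a numerical semigroup, and $|\PF(v(R))|=\type(\k\ps{v(R)})$ (this is precisely the identification used in \Cref{typecomparison}, because $\k\ps{v(R)}$ is itself a complete numerical semigroup ring), we obtain $\type(R)=\type(\k\ps{v(R)})$, which is exactly the hypothesis required to close the converse direction in \Cref{transitiontosemigroups}(2).

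No serious obstacle is anticipated: the proposition is essentially a packaging of the two cited results. The only mild point to keep in mind is that the type-equality assumption appearing as the extra hypothesis in \Cref{transitiontosemigroups}(2) is exactly what the Barucci--Fr\"oberg theorem supplies in the almost Gorenstein setting, so no further computation in the pseudo-Frobenius set $\PF(v(R))$ is required.
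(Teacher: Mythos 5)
Your proposal is correct and follows essentially the same route as the paper: both reduce to \Cref{transitiontosemigroups} and supply the missing hypothesis $\type(R)=\type(\k\ps{v(R)})$ from the Barucci--Fr\"oberg result on almost Gorenstein rings. No issues.
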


\begin{proof}
    Since $R$ is almost Gorenstein, we have $\type(R)=\type(\k\ps{v(R)})$ by \cite[Proposition 29]{Barucci-Froberg97}. The proof is now complete by \Cref{transitiontosemigroups}.
\end{proof}

\begin{proposition}\label{AlmostGorensteinVal}
    Suppose the numerical semi-group ring $S=k\ps{H}$ is almost Gorenstein. Then the following statements hold. 
    \begin{enumerate}
        \item If $S$ is maximal reduced type then $c\leq 2b_1-1$,
        \item If $S$ is of minimal reduced type but not Gorenstein  then $c\geq 2b_1+3$.
    \end{enumerate}
\end{proposition}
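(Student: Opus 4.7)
The plan is to combine Nari's symmetry relation (\Cref{nari theorem on almost GOrenstein}) with the pseudo-Frobenius characterizations of maximal and minimal reduced type established in \Cref{mainpropo} and \Cref{minredtypemainprop}. Write $\PF(H) = \{f_1 < f_2 < \cdots < f_t\}$ with $f_t = c - 1$. In both parts we may (and must) assume $t \geq 2$: part (2) explicitly excludes the Gorenstein case, and the content of part (1) also lies there (the symmetry degenerates for $t=1$). Under $t \geq 2$, Nari's symmetry at $i = 1$ reads $f_1 + f_{t-1} = c - 1$, and since $f_1 \leq f_{t-1}$ this gives the master inequality $2f_1 \leq c-1$, which drives both proofs.

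For (1), \Cref{mainpropo} translates maximal reduced type into $f_1 \geq c - b_1$. Substituting into $2f_1 \leq c-1$ produces $2(c - b_1) \leq c - 1$, i.e.\ $c \leq 2b_1 - 1$.

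For (2), the remark after \Cref{minredtypemainprop} translates minimal reduced type (in a non-Gorenstein ring) into $f_{t-1} \leq c - b_1 - 2$. Nari's symmetry then gives $f_1 = c - 1 - f_{t-1} \geq b_1 + 1$, and inserting this bound into $2f_1 \leq c-1$ yields $2(b_1 + 1) \leq c - 1$, that is, $c \geq 2b_1 + 3$.

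There is no genuine technical obstacle; once the Gorenstein case is set aside, the whole argument is a one-line combinatorial manipulation of the pseudo-Frobenius numbers. The only point requiring care is the degenerate case $t = 1$, where the symmetry relation $f_1 + f_{t-1} = c - 1$ is vacuous (there is no partner $f_{t-1}$ to pair with $f_1$), so the argument must exclude it, which is precisely the role of the non-Gorenstein hypothesis in~(2).
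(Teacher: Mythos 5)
Your proof is correct and follows essentially the same route as the paper: Nari's symmetry $f_i+f_{t-i}=c-1$ combined with the pseudo-Frobenius characterizations from \Cref{mainpropo} and \Cref{minredtypemainprop}; your derivation of (2) via $f_1=c-1-f_{t-1}\geq b_1+1$ is just a trivial rearrangement of the paper's direct bound $c-1=f_1+f_{t-1}\leq 2(c-b_1-2)$. You are in fact slightly more careful than the paper in flagging that the argument for (1) needs $t\geq 2$ (the paper's proof silently invokes an index $1\leq i\leq t-1$ that does not exist when $t=1$, and indeed $\k\ps{t^2,t^5}$ shows the inequality in (1) can fail for Gorenstein rings).
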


\begin{proof}
    Let $r=\type(R)$ and $\PF(v(R))=\{f_1,\ldots, f_{r-1},c-1\}$ with $f_i<f_{i+1}$. From \Cref{nari theorem on almost GOrenstein}, we see that $R$ is almost Gorenstein if and only if $f_i+f_{r-i}=c-1$ for $1\leq i\leq r-1$. 

    For $(a)$, assume that $R$ has maximal reduced type. Hence, by \Cref{mainpropo}, $f_i\geq c-b_1$ for all $i$. Thus, $c-1=f_i+f_{r-i}\geq 2c-2b_1$ which upon simplification yields our desired conclusion.

    For $(b)$, assume $R$ is of minimal reduced type. This implies that $f_{r-1}\leq c-b_1-2$. Thus, $c-1=f_1+f_{r-1}\leq 2c-2b_1-4$. Simplifying this, we get $c\geq 2b_1+3$.
\end{proof}

The following example shows that the converse of the above statements do not hold in general.

\begin{example}
    Let $R=\k\ps{t^8,t^{11},t^{12},t^{13},t^{15},t^{17},t^{18}}$. Here $\PF(H)=\{4,5,7,9,10,14\}$ and hence $R$ is almost Gorenstein with $\type(R)=6$. Note however that $s(R)=4$. Thus, $R$ is not of maximal reduced type even though $c\leq 2b_1-1$ (here, $c=15, b_1=8$).                         
\end{example}

\begin{example}\cite[Example 3.9]{eto2017almost}
    Let $R=\k\ps{t^{34}, t^{51}, t^{53}, t^{70}}$. This is almost Gorenstein of type $3$ since $\PF(H)=\{17, 814, 831\}$. Here $c=832$ and hence satisfies $c\geq 2b_1+3$. However, notice that $s(R)=2$. 
\end{example}

However, in almost Gorenstein numerical semigroup rings of type 2, the situation is very clear as the following corollary shows. Such rings are called \textit{pseudo-symmetric} numerical semi-group rings.

\begin{corollary}\label{pseudosymm}
    Suppose $S=\k\ps{H}$ is an almost Gorenstein numerical semi-group ring of type $2$. Then the following statements hold.
    \begin{enumerate}
        \item     $S$ is maximal reduced type if and only if $c\leq 2b_1-1$,
        \item $S$ is of minimal reduced type if and only if $c>2b_1-1$.
    \end{enumerate}
\end{corollary}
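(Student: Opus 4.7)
The plan is to use Nari's theorem (Theorem~\ref{nari theorem on almost GOrenstein}) to pin down explicitly the unique non-Frobenius pseudo-Frobenius element, then feed this into Proposition~\ref{mainpropo} to obtain part (a), and finally deduce part (b) from (a) by complementation.

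First, since $\type(S) = 2$, we may write $\PF(H) = \{f_1, c-1\}$ with $f_1 < c-1$. Specializing the almost Gorenstein condition from Theorem~\ref{nari theorem on almost GOrenstein} to the case $t = 2$ gives the single relation $f_1 + f_1 = c-1$, so $f_1 = (c-1)/2$. (In particular $c$ is odd.)

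For part (a), Proposition~\ref{mainpropo} says $S$ has maximal reduced type if and only if $\min_{x \in \PF(H)} x \geq c - b_1$. The minimum here is $f_1$, so the condition becomes $f_1 \geq c - b_1$. Substituting $f_1 = (c-1)/2$ yields $c - 1 \geq 2c - 2b_1$, equivalently $c \leq 2b_1 - 1$, proving (a).

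For part (b), the key observation is that the inequality $1 \leq s(S) \leq \type(S) = 2$ forces $s(S) \in \{1,2\}$, so maximal reduced type and minimal reduced type are complementary cases when the type is $2$. Part (b) then follows by negating part (a). (Alternatively, one may re-derive it directly from Proposition~\ref{minredtypemainprop}: the condition $\max_{x \in \PF(H) \setminus \{c-1\}} x < c - b_1$ reduces to $f_1 < c - b_1$, which via $f_1 = (c-1)/2$ is equivalent to $c > 2b_1 - 1$.)

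There is essentially no obstacle: once Nari's theorem is invoked, both statements are short algebraic manipulations, and the dichotomy of reduced type values for $\type(S) = 2$ removes any need to separately check that the two conditions on $c$ are mutually exclusive---they automatically partition the almost Gorenstein type $2$ case.
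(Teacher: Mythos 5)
Your proposal is correct and follows essentially the same route as the paper: both use Nari's theorem to deduce $f_1=(c-1)/2$, feed this into Proposition~\ref{mainpropo} to settle the maximal reduced type condition, and obtain part (2) by noting that $1\leq s(S)\leq \type(S)=2$ makes the two cases complementary. The only cosmetic difference is that the paper cites Proposition~\ref{AlmostGorensteinVal} for one direction of (1) while you derive both directions directly from Proposition~\ref{mainpropo}; the underlying computation is identical.
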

\begin{proof}
    One of the directions for $(1)$ follows from \Cref{AlmostGorensteinVal}. For the converse, we first observe that since $\type(S)=2$, in the notation of the proof of \Cref{AlmostGorensteinVal}, we get that $2f_1=c-1$. Hence, $\PF(H)=\{\frac{c-1}{2},c-1\}$.
    Since $c\leq 2b_1-1$, we get that $\frac{c-1}{2}\geq c-b_1$ and hence $R$ is of maximal reduced type by \Cref{mainpropo}. This proves (1).
    
    Since  $\type(S) = 2$, if the ring is not of maximal reduced type, then it must be that $s(S)=1$. Thus, (2) follows directly from (1). 
\end{proof}

The following examples illustrate both the cases.
\begin{example}\label{eg1redtypegluing}
    Let $R=\k\ps{t^{4},t^{9},t^{11}}$. Then $\PF(H)=\{7,14\}$ and hence $R$ is almost Gorenstein of type 2. Moreover, $s(R)=1$ by direct computation as well as by \Cref{pseudosymm}.
\end{example}

\begin{example}\label{eg2redtypegluing}
Let $R=\k\ps{t^{5},t^{6},t^{7},t^9}$. Here $\PF(H)=\{4,8\}$ and hence $R$ is almost Gorenstein of type $2$. Moreover, $s(R)=2$ by direct computation as well as by \Cref{pseudosymm}.
\end{example}

\subsection{Far Flung Gorenstein Rings}\label{section of far flung gorenstein}
Recently the notion of far-flung Gorenstein rings were introduced in \cite{herzog2021tiny}. These are rings where the trace ideal of the canonical module equals $\cC$. We do not go into the details of trace ideals as we will not be using the notion in this article. We refer the reader to \cite{LindoTrace}, \cite{goto2020correspondence} and \cite{herzog2019trace} for further details. These rings are (evidently) non-Gorenstein, and hence $\type(R)\geq 2$ (thus, $a_1=e(R)\geq 3$).

\begin{proposition}\label{redtype2}
Suppose $S=\k\ps{H}$ is a far-flung Gorenstein complete numerical semigroup ring. Then $s(S)\geq 2$. 
\end{proposition}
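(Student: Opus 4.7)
I will argue by contradiction. Assume $s(S) = 1$, so by \Cref{redtypeinterpret} the only gap of $H$ in the window $[c - b_1, c - 1]$ is $c - 1$ itself; equivalently $\{c - b_1,\, c - b_1 + 1,\, \ldots,\, c - 2\} \subseteq H$. The goal is to exhibit an element in $\omega_S^{-1}$ whose valuation is strictly less than $c$, which will force $\operatorname{tr}(\omega_S) \supsetneq \cC$ and contradict the far-flung Gorenstein hypothesis.

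The first step is to translate the far-flung Gorenstein condition into semigroup terms. Take $\omega_S$ to be the monomial fractional ideal with valuation set the standard canonical relative ideal $K := K(H) = \{x \in \ZZ : c - 1 - x \notin H\}$. One checks $0 \in K$ (as $c - 1 \notin H$) and $K \subseteq \NN_0$ (for $x < 0$, $c - 1 - x \geq c$ lies in $H$), so $\omega_S \subseteq \overline{S}$. Consequently $\cC \subseteq \omega_S^{-1}$, and since $1 \in \omega_S$ this gives the chain $\cC \subseteq \omega_S^{-1} \subseteq \omega_S \cdot \omega_S^{-1} = \operatorname{tr}(\omega_S)$. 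Thus the equality $\operatorname{tr}(\omega_S) = \cC$ forces $\omega_S^{-1} = \cC$, or in valuations $H : K = [c, \infty)$, where $H : K := \{y \in \ZZ : y + K \subseteq H\}$.

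The crucial combinatorial computation is then that $c - 2 \in H : K$, which will contradict $H : K = [c, \infty)$ since $c - 2 < c$. I verify $c - 2 + x \in H$ for each $x \in K$ by splitting on the size of $x$. When $x = 0$, the standing assumption gives $c - 2 \in H$. When $1 \leq x \leq b_1 - 1$, the value $c - 1 - x$ belongs to $[c - b_1,\, c - 2] \subseteq H$, so $x \notin K$ and this range contributes nothing. When $x \geq b_1 \geq 2$ (the lower bound holding because $S$ is non-regular), $c - 2 + x \geq c$, and all integers $\geq c$ lie in $H$. Hence $c - 2 \in H : K$, yielding the contradiction.

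The heart of the argument is the first step, the identification of far-flung Gorenstein with the equality $H : K = [c, \infty)$; the key input is the inclusion $\cC \subseteq \omega_S^{-1}$ afforded by $\omega_S \subseteq \overline{S}$, together with the automatic inclusion $\cC \subseteq \operatorname{tr}(\omega_S)$. Once this dictionary is in place, the verification that $c - 2$ belongs to the colon ideal $H : K$ is an elementary case split on the range of $x \in K$, and the non-regularity hypothesis $b_1 \geq 2$ is what makes the third case work.
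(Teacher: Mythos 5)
Your proof is correct, but it takes a genuinely different route from the paper's. The paper argues directly: it invokes the structural result of Herzog--Kumashiro--Stamate that for a far-flung Gorenstein ring $t^1$ is a minimal generator of the canonical ideal $\omega_S=\sum_{\alpha\in\operatorname{PF}(H)}St^{c-1-\alpha}$, so that $c-2\in\operatorname{PF}(H)$; since $c-1$ and $c-2$ both lie in $[c-b_1,c-1]\setminus H$, \Cref{redtypeinterpret} gives $s(S)\geq 2$ at once. You instead argue by contradiction and derive everything you need from the definition $\operatorname{tr}(\omega_S)=\cC$ itself: the normalization $K\subseteq\NN_0$ with $0\in K$ yields the sandwich $\cC\subseteq\omega_S^{-1}\subseteq\omega_S\cdot\omega_S^{-1}=\operatorname{tr}(\omega_S)=\cC$, hence $H-K=[c,\infty)$, and the assumption $s(S)=1$ (i.e.\ $[c-b_1,c-2]\subseteq H$) lets you verify by the three-way case split that $c-2\in H-K$, a contradiction; the hypotheses $b_1\geq 2$ and $c\geq 2$ needed in the case split do hold since $S$ is non-regular. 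What your approach buys is self-containedness: you reprove exactly the fragment of the cited theorem that is needed (in effect, that $c-2\in K$, equivalently $c-2\notin H$) rather than quoting it, at the cost of setting up the monomial colon-ideal dictionary. The paper's proof is shorter and, by exhibiting $c-2$ as an explicit pseudo-Frobenius number, feeds directly into the later arguments (e.g.\ \Cref{ffg type leq 3}) that reuse that fact; your contradiction argument establishes the inequality $s(S)\geq 2$ but does not record $c-2\in\operatorname{PF}(H)$ as a standalone conclusion, though it is implicit in your computation.
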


\begin{proof}
It is well-known that there is a canonical module $\ds \omega_S=\sum_{\alpha\in \operatorname{PF}(H)}St^{c-1-\alpha}$ where $c$ is the conductor valuation. Notice that $c-1\in \operatorname{PF}(H)$ always. Thus, $t^0=1$ is a generator of $\omega_S$. Moreover, from \cite[Theorem 5.1]{herzog2021tiny}, we get that $t^1$ is also a generator. Hence, $\alpha=c-2\in \operatorname{PF}(H)$ as well. But notice that $c-2\in [c-b_1,c-1]\setminus v(R)$. Hence, $s(S)\geq 2$. 
\end{proof}

\begin{theorem}\label{ffg type leq 3}
Suppose $S=\k\ps{H}$ is a far-flung Gorenstein complete numerical semi-group ring of $\type(S)\le 3$, Then $S$ has maximal reduced type.

\end{theorem}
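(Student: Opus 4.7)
My plan is to apply Proposition~\ref{mainpropo}, which reduces maximal reduced type to the inequality $\min \PF(H) \geq c - b_1$. Proposition~\ref{redtype2} (via \cite[Theorem~5.1]{herzog2021tiny}) already forces $c - 2 \in \PF(H)$ for any far-flung Gorenstein $S$, and since such $S$ is non-Gorenstein, $\type(S) \in \{2, 3\}$.

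When $\type(S) = 2$, writing $\PF(H) = \{f_1, c - 1\}$, the above observation forces $f_1 = c - 2$. The Cohen--Macaulay bound $e(S) \geq \type(S) + 1$ used earlier in the paper (cf.~\cite[Fact~2.6]{herzog2021tiny}) gives $b_1 = e(S) \geq 3$, so $f_1 = c - 2 \geq c - b_1$ and Proposition~\ref{mainpropo} concludes.

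For $\type(S) = 3$, write $\PF(H) = \{f_1 < f_2 < c - 1\}$ so that $f_2 = c - 2$; I would argue $f_1 \geq c - b_1$ by contradiction. Suppose $f_1 < c - b_1$ and set $d := c - 1 - f_1 \geq b_1$. The central step is to reformulate far-flungness $\operatorname{tr}(\omega_S) = \cC$ as the fractional-ideal identity $(S :_{\Frac S} \omega_S) = \cC$ (a short check using $1 \in \omega_S$ and $\cC \omega_S = \cC$). Since $\omega_S = S + tS + t^d S$ and fractional ideals in the numerical semigroup ring $S$ are monomial, this identity translates on valuations into
\begin{equation*}
\{n \in \mathbb{Z}_{\geq 0} : n \in H,\ n+1 \in H,\ n + d \in H\} \;=\; \{n : n \geq c\}.
\end{equation*}

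To contradict this equality, I will produce $n \in H \cap [0, c - 1]$ with $n + 1 \in H$ and $n + d \in H$. By the argument in the proof of Proposition~\ref{mainpropo}, every gap of $H$ in $[c - b_1, c - 1]$ is a pseudo-Frobenius number; under our hypothesis $f_1 < c - b_1$, the pseudo-Frobenius numbers lying in $[c - b_1, c - 1]$ are exactly $\{c - 2, c - 1\}$, so those are the only gaps of $H$ in $[c - b_1, c - 1]$. Consequently $[c - b_1, c - 3] \subseteq H$, and since $b_1 \geq \type(S) + 1 = 4$, the choice $n := c - b_1$ yields $n, n+1 \in H$ with $n < c$ and $n + d \geq c \in H$ automatic, giving the required contradiction. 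The main obstacle is really the reformulation step (the valuation translation requires that both sides be determined by their valuation sets); once in hand the combinatorics is immediate. The hypothesis $\type(S) \leq 3$ is essential because it caps the pseudo-Frobenius gap count in $[c - b_1, c - 1]$ at three, keeping a consecutive $H$-pair below $c - 2$, and for $\type(S) \geq 4$ extra pseudo-Frobenius numbers could destroy this run---precisely the obstruction tied to \cite[Question~5.5]{herzog2021tiny} noted in the paper.
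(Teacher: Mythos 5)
Your proof is correct, and for the substantive $\type(S)=3$ case it takes a genuinely different route from the paper. The paper splits on whether $S$ has minimal multiplicity: in that case $e(S)=4$ and it combines $\PF(H)=\{b_2-4,b_3-4,b_4-4\}$ with the sumset condition of \cite[Corollary 6.2]{herzog2021tiny} to force $b_2\geq c$; otherwise $e(S)=5$ by \cite[Corollary 5.3]{herzog2021tiny} and \cite[Proposition 6.1(ii)]{herzog2021tiny} pins down $\omega_S$ as $\langle 1,t,t^2\rangle$ or $\langle 1,t,t^3\rangle$, so $\min\PF(H)\geq c-4>c-5=c-b_1$. You avoid the case split entirely: you unpack the defining condition $\operatorname{tr}(\omega_S)=\cC$ into the colon identity $S:\omega_S=\cC$ (your check via $1\in\omega_S$ and $\cC\omega_S=\cC$ is sound, granting the standard formula $\operatorname{tr}(I)=I\cdot(S:I)$ for fractional ideals containing a nonzerodivisor), translate it into the semigroup identity $\{n: n,\,n+1,\,n+d\in H\}=\{n\geq c\}$, and exhibit $n=c-b_1$ as a violation under the assumption $f_1<c-b_1$, using only $b_1=e(S)\geq\type(S)+1=4$ and the observation from \Cref{mainpropo} that every gap of $H$ in $[c-b_1,c-1]$ is pseudo-Frobenius. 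Each step checks out: the translation to valuation sets is legitimate because $S:\omega_S$ is a monomial fractional ideal of the numerical semigroup ring, $c-b_1\geq 0$ since $S$ is not regular, and $c-b_1+1\leq c-3$ precisely because $b_1\geq 4$. What your approach buys is self-containedness and uniformity over the multiplicity dichotomy, needing from \cite{herzog2021tiny} only the fact $c-2\in\PF(H)$ already isolated in \Cref{redtype2}; what the paper's approach buys is that its sumset formulation is the one that scales to the type $4$ and $5$ results (\Cref{ffg type 4 non min mult}, \Cref{ffG rohrbach maxreduced type}).
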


\begin{proof}
From \Cref{redtype2}, the case when $\type(S)=2$ is clear. 

Suppose $\type(S)=3$ and $S$ has minimal multiplicity. Then we know that $b_1=e(S)=4=\edim(S)$ (\cite[Proposition 3.1]{Sally80}). As in the proof of \Cref{minmultcrit}, we get that $$\operatorname{PF}(H)=\{b_2-4,b_3-4,b_4-4\}$$ and hence $c-1=b_4-4$ or $b_4=c+3$. Using the proof of \Cref{redtype2}, we see that $c-2\in \operatorname{PF}(H)$ and hence, $b_3=c+2$. By \cite[Corollary 6.2]{herzog2021tiny}, we know that $$\{2b_4-3,\cdots, 2b_4\}\subseteq \{b_i+b_j, 2\leq i,j\leq 4\}.$$
Notice that $2b_4=2c+6, 2b_4-1=2c+5=b_3+b_4, 2b_4-2=2c+4=2b_3$. If $b_2<c$, $2b_4-3$ cannot be generated by $b_i+b_j$. Hence $b_2\geq c$ and so we are done by \Cref{minmultcrit}.

Finally, suppose $\type(S)=3$ and $S$ is not of minimal multiplicity.  First, we  notice that $b_1=e(S)=5$ by \cite[Corollary 5.3]{herzog2021tiny}. Choosing $\omega_S=\sum\limits_{\alpha\in \operatorname{PF}(H)}St^{c-1-\alpha}$ as in the proof of \Cref{redtype2}, we know that $t^0,t^1$ are generators of $\omega_S$, i.e., $c-1, c-2\in \operatorname{PF}(H)$. Since $\type(S)=3$, \cite[Proposition 6.1 (ii)]{herzog2021tiny} gives us that $\omega_S=\langle1,t,t^2\rangle$ or $\omega_S=\langle1,t,t^3\rangle$. 

Assume that $\omega_S=\langle 1,t,t^2\rangle$. Then we know that $$\operatorname{PF}(H)=\{c-3,c-2,c-1\}.$$ Since $c-3>c-5$, we are done by \Cref{mainpropo}. Next assume that $\omega_R=\langle 1,t,t^3\rangle$. Then $$\operatorname{PF}(H)=\{c-4,c-2,c-1\}$$ and again \Cref{mainpropo} finishes the proof. 
\end{proof}
However, even if $S$ is far-flung Gorenstein numerical semigroup of minimal multiplicity with $\type(S)\geq 4$, it is not guaranteed that the ring will be of maximal reduced type. The following example illustrates this.

\begin{example}
    Let $R=\k\ps{t^5,t^{11},t^{17},t^{18},t^{19}}$. This ring is of minimal multiplicity with $e(R)=5, \type(R)=4, n=5$. Also notice that $R$ is far-flung Gorenstein by \cite[Corollary 6.2]{herzog2021tiny}. However, $a_2=11<c=15$ and hence $R$ is not of maximal reduced type by \Cref{minmultcrit}.
\end{example}
If the ring is not of minimal multiplicity, then far flung Gorenstein rings of type $4$ indeed have maximal reduced type.
\begin{theorem}\label{ffg type 4 non min mult}
    Suppose $S$ is a far flung Gorenstein complete numerical semi-group ring of $\type (S)=4$ which has non minimal multiplicity. Then $S$ has maximal reduced type.
\end{theorem}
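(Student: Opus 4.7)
The strategy parallels that of Theorem~\ref{ffg type leq 3} in the non-minimal-multiplicity case for type $3$. First, I would invoke the structural results on far-flung Gorenstein rings in \cite{herzog2021tiny} (specifically the analogue for type $4$ of Corollary~5.3) to pin down $b_1 = e(S)$ to a specific small value (or a very short list of values). Next, following the proof of Proposition~\ref{redtype2}, write the canonical module as
$$\omega_S = \sum_{\alpha \in \PF(H)} S t^{c-1-\alpha},$$
so that $c-1,\, c-2 \in \PF(H)$ automatically, corresponding to the generators $1$ and $t$ of $\omega_S$. Since $\type(S) = 4$, there are exactly two further pseudo-Frobenius numbers, and Proposition~6.1(ii) of \cite{herzog2021tiny} constrains $\omega_S$ to have the shape $\langle 1,\,t,\,t^{j_1},\,t^{j_2}\rangle$ for a short enumerated list of pairs $(j_1,j_2)$ with $1 < j_1 < j_2$.

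With this enumeration in hand, the pseudo-Frobenius set takes the form
$$\PF(H) \;=\; \{\,c-1-j_2,\; c-1-j_1,\; c-2,\; c-1\,\},$$
and the criterion in Proposition~\ref{mainpropo} for maximal reduced type reduces to the single inequality $j_2 \leq b_1 - 1$. I would then run through each possible shape of $\omega_S$ from the list and verify this inequality, leveraging the fact that for $\omega_S = \langle 1,t,t^{j_1},t^{j_2}\rangle$ one needs $t^{j_2} \in \omega_S$ to be compatible with the multiplicity bound on $b_1$ obtained in the first step. In each admissible case the top generator exponent $j_2$ is strictly less than $b_1$, giving $\min_{x \in \PF(H)} x = c - 1 - j_2 \geq c - b_1$, and Proposition~\ref{mainpropo} concludes.

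The main obstacle is the case analysis: the classification of possible shapes of $\omega_S$ for type-$4$ far-flung Gorenstein rings is precisely the kind of data partially left open by \cite[Questions 5.5]{herzog2021tiny} (as noted after Theorem~C above), so I would need to either extract from the existing results in \cite{herzog2021tiny} an explicit enumeration for type $4$ with non-minimal multiplicity, or establish it directly by arguing that $2b_n - k \in \{b_i + b_j : 2 \leq i,j \leq n\}$ for small $k$ (as in the minimal multiplicity portion of the proof of Theorem~\ref{ffg type leq 3}) forces $b_2$ to be close enough to $c$ to guarantee $j_2 < b_1$. Modulo this bookkeeping, the verification of the inequality $j_2 \leq b_1 - 1$ in each case should be routine, exactly as in the two-case analysis ($\omega_S = \langle 1,t,t^2\rangle$ versus $\langle 1,t,t^3\rangle$) that closed out Theorem~\ref{ffg type leq 3}.
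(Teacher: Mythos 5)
Your skeleton matches the paper's proof: write $\omega_S=\sum_{\alpha\in\PF(H)}St^{c-1-\alpha}$ so that $c-1,c-2\in\PF(H)$, observe $\PF(H)=\{c-1-j_2,c-1-j_1,c-2,c-1\}$, and reduce maximal reduced type via \Cref{mainpropo} to the single inequality $j_2\leq b_1-1$. However, you stop exactly where the proof has to be closed, and you misdiagnose the remaining work as requiring an enumeration of the possible shapes of $\omega_S$ of the kind left open by \cite[Questions 5.5]{herzog2021tiny}. No such enumeration is needed. The only structural inputs are: (i) \cite[Corollary 5.3]{herzog2021tiny} gives $6\leq e(S)\leq 9$ in the type-$4$ non-minimal-multiplicity case (the lower bound $e(S)\geq 6$ is the part that matters), and (ii) the far-flung Gorenstein condition, via \cite[Proposition 6.1]{herzog2021tiny}, says that for $A=\{0,1,j_1,j_2\}$ the sumset $A+A$ must contain $\{0,1,\dots,e(S)-1\}$. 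Since $3$ must appear in $A+A$ and $0+0,0+1,1+1$ only produce $0,1,2$, this already forces $j_1\in\{2,3\}$; that is the entire "case analysis."

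The decisive step you leave as "bookkeeping" is then a short contradiction: suppose $j_2\geq e(S)$. Every sum involving $j_2$ is at least $e(S)$, so
\[
(A+A)\cap[0,e(S)-1]\subseteq\{0,1,j_1\}+\{0,1,j_1\}=
\begin{cases}\{0,1,2,3,4\}, & j_1=2,\\ \{0,1,2,3,4,6\}, & j_1=3.\end{cases}
\]
In either case $5\notin A+A$, while $5\leq e(S)-1$ because $e(S)\geq 6$; this contradicts (ii). Hence $j_2\leq e(S)-1=b_1-1$, which is exactly your target inequality. Note that the lower bound $e(S)\geq 6$ is essential here (if $e(S)$ could be $5$, the case $j_1=3$ would survive), so "pinning down $b_1$ to a short list" is not optional garnish but the hypothesis that makes the witness $5$ lie in the required interval. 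With these two points supplied, your outline becomes the paper's proof.
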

\begin{proof}
    As in the previous proof, choose $\omega_S=\sum\limits_{\alpha\in \operatorname{PF}(H)}t^{c-1-\alpha}$. Since $\type(S)=4$, we have $\omega_S=\langle 1,t,t^a,t^b\rangle$ where $a<b$. Also, it follows that $6\leq e(S)\leq 9$ (\cite[Corollary 5.3]{herzog2021tiny}).  Since $S$ is far flung Gorenstein, by \cite[Proposition 6.1]{herzog2021tiny},
    \begin{align}\label{type 4 non minmal multiplicity eq1}
    \{0,1,2,3,\cdots, e(R)-1\}\subseteq \{(c-1-\alpha) + (c-1-\beta)\mid \alpha, \beta\in \operatorname{PF}(H)\}.
    \end{align}

    Thus we have two cases for the description of  $\omega_R$:
    \begin{enumerate}
        \item $\omega_S=\langle 1,t,t^2,t^b\rangle$
        \item $\omega_S=\langle 1,t,t^3,t^b\rangle$
    \end{enumerate}
    Notice that $c-1-b$ is a minimal element in $\PF(H)$, the pseudo Frobenius set. Thus $R$ is of maximal reduced type, if and only if  $c-1-b\geq c-b_1$ or $e(S)=b_1\geq b+1$.

    Suppose $e(S)\leq b$. Now if case (1) holds, then $B\cap [5,e(S)-1]=\emptyset$ which contradicts  \eqref{type 4 non minmal multiplicity eq1}. On the other hand, if case(2) holds, then $B\cap \{5\}=\emptyset$, again contradicsts \eqref{type 4 non minmal multiplicity eq1}.  Thus we have $e(S)\geq b+1$ and hence $S$ is of maximal reduced type.
\end{proof}
When the type is more than $4$, we need more tools to show that it is of maximum reduced type. In \cite[Corollary 5.3]{herzog2021tiny}, the authors connected the  the maximum bound for the multiplicity with  the \textit{Rohrbach number} $\overline{n}(r)$ where $r=\type(R)$.  The Rohrbach number is defined as follows: Suppose $A$ is a set of non negative integers with cardinality $r$. Let $n(A)$ is the integer such that  $A+A=\{a+b~|~a,b\in A\}$ contains consecutive  integers $0,1,\dots n(A)-1$ but not $n(A)$. The Rohrbach number is the integer
\begin{align*}
\overline{n}(r)=\max\{ n(A)~|~|A|=r\}.
\end{align*}
\begin{theorem}\label{ffG rohrbach maxreduced type}
    Suppose $S=\k\ps{H}$ is a far-flung Gorenstein numerical semigroup ring of type $r$. If $e(S)>\overline{n}(r-1)+1$, then $S$ has maximal reduced type.
\end{theorem}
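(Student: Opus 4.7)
The plan is to encode maximal reduced type as an upper bound on the exponents appearing in the canonical module, and then play this against the sumset covering property that characterizes the far‑flung Gorenstein condition.

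First, I would set up the combinatorial object. Let
\[
A \;=\; \{\,c-1-\alpha \mid \alpha \in \operatorname{PF}(H)\,\} \subset \mathbb{Z}_{\geq 0},
\]
so $|A|=r$ and the canonical module is $\omega_S=\sum_{a\in A} S t^{a}$. The far‑flung Gorenstein hypothesis, as invoked in the proof of \Cref{ffg type 4 non min mult} (see also \cite[Proposition 6.1]{herzog2021tiny}), gives the sumset inclusion
\[
\{0,1,\ldots,e(S)-1\} \;\subseteq\; A+A.
\]

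Next, I would reformulate maximal reduced type purely in terms of $A$. By \Cref{mainpropo}, $S$ has maximal reduced type if and only if $\min_{\alpha\in \operatorname{PF}(H)} \alpha \geq c-b_1 = c-e(S)$, which is equivalent to $\max A \leq e(S)-1$. Thus the theorem reduces to showing that $\max A \leq e(S)-1$ whenever $e(S) > \overline{n}(r-1)+1$.

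The main step is a contradiction argument. Suppose $M := \max A \geq e(S)$, and set $A' := A \setminus \{M\}$, a set of cardinality $r-1$. Since every element of $A$ is nonnegative, any sum of the form $M+a$ with $a\in A$ is at least $e(S)$. Consequently, every integer in $\{0,1,\ldots,e(S)-1\}$ that lies in $A+A$ must already lie in $A'+A'$. Combining with the far‑flung Gorenstein inclusion, $\{0,1,\ldots,e(S)-1\}\subseteq A'+A'$, so by the definition of $n(\cdot)$ we obtain $n(A') \geq e(S)$, whence
\[
\overline{n}(r-1) \;\geq\; n(A') \;\geq\; e(S) \;>\; \overline{n}(r-1)+1,
\]
a contradiction. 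Hence $\max A \leq e(S)-1$ and $S$ is of maximal reduced type.

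There is no serious obstacle here; the proof is essentially a direct translation once the right variables are chosen. The only mildly delicate point is observing that discarding $M=\max A$ from $A$ only destroys sums that are automatically $\geq e(S)$, so the covering of the interval $\{0,\ldots,e(S)-1\}$ is preserved when passing from $A+A$ to $A'+A'$. This is precisely the mechanism that converts the far‑flung Gorenstein sumset condition into a bound governed by the Rohrbach number $\overline{n}(r-1)$.
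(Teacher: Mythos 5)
Your proof is correct and follows essentially the same route as the paper's: both arguments assume $\max A \geq e(S)$, observe that the sumset covering of $\{0,\dots,e(S)-1\}$ then survives passage to $A'=A\setminus\{\max A\}$, and derive $\overline{n}(r-1)\geq e(S)$, contradicting the hypothesis. Your write-up merely makes explicit the step (that sums involving $\max A$ exceed $e(S)-1$) which the paper leaves implicit.
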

\begin{proof}
    Since $S$ is a far-flung Gorenstein numerical semigroup ring, by \cite[Theorem 5.1]{herzog2021tiny}, there exists a canonical module $C=\langle f_1,\dots,f_r\rangle$ where $n_i=v(f_i), n_1<n_2<\cdots <n_r$ such that $\{0,1,\dots,e-1\}\subseteq A+A$ where $A=\{n_1,\dots,n_r\}$. Here $e=e(S)$.

    Suppose $n_r>e-1$. Then $\{0,1,\dots,e-1\}\subseteq A'+A'$ where $A'=\{n_1,\dots,n_{r-1}\}$. Thus the consecutive set of numbers generated by $A'+A'$ is $0,1,\dots,n(A')-1$. But by hypothesis, $\overline{n}(r-1)<e-1$ and thus, by definition we have $n(A')\leq \overline{n}(r-1)<e-1$. This contradiction forces $n_r\leq e-1$. Thus the pseudo Frobenius set $PF(H)=\{c-1-n_1,\dots,c-1-n_r \}$. Since $n_r\leq e-1$, we get $c-1-n_r\geq c-b_1$. Thus by \Cref{mainpropo}, we have $R$ is of maximum reduced type.
\end{proof}
The bound $\overline{n}(r-1)<e-1$ may not be sharp. In the following result, $r=5, \overline{n}(r)-r+1=9$.
\begin{proposition}\label{ffG rohrback type 5}
    Suppose $S$ is far flung Gorenstein complete numerical semigroup ring of  $\type (S)=5$. If $e(S)\geq 9$, then $S$ is of maximal reduced type.
\end{proposition}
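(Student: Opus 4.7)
The plan is to mimic the strategy of \Cref{ffG rohrbach maxreduced type}, but with a sharper analysis in the two borderline multiplicities $e = 9, 10$ not handled by that theorem. By \cite[Theorem 5.1]{herzog2021tiny} one can choose a canonical module $\omega_S = \langle t^{n_1}, \ldots, t^{n_5}\rangle$ with $0 = n_1 < \cdots < n_5$, and set $A = \{n_1, \ldots, n_5\}$, so that $\{0, 1, \ldots, e-1\} \subseteq A + A$. Assume for contradiction that $n_5 \geq e$. Since every sum in $A + A$ involving $n_5$ is at least $e$, the $4$-element set $A' := A \setminus \{n_5\}$ already satisfies $\{0, \ldots, e-1\} \subseteq A' + A'$, so $n(A') \geq e$. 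Since $\overline{n}(4) = 9$, this forces $e \leq 9$, hence $e = 9$ by the hypothesis. The equality $n_2 = 1$ is forced by $1 \in A + A$, and a short inspection of the possibilities for $n_3, n_4$ shows that $A' = \{0, 1, 3, 4\}$ is the unique $4$-subset achieving $n(A') = 9$.

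So it remains to exclude $A = \{0, 1, 3, 4, n_5\}$ with $n_5 \geq 9$, in which case the pseudo-Frobenius set is
\begin{equation*}
\operatorname{PF}(H) = \{c-1,\, c-2,\, c-4,\, c-5,\, c-1-n_5\}.
\end{equation*}
Since $9 \in H$ and $c - 1 - n_5 \in \operatorname{PF}(H)$, we must have $(c-1-n_5) + 9k \in H$ for every $k \geq 1$. Whenever $(c-1-n_5) + 9k$ coincides with another element of $\operatorname{PF}(H)$, we obtain an immediate contradiction. This rules out all $n_5 \geq 9$ with $n_5 \equiv 0, 1, 3, 4 \pmod 9$, leaving only the residue classes $n_5 \equiv 2, 5, 6, 7, 8 \pmod 9$.

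The hardest step will be eliminating these remaining cases. Here the plan is to invoke the stronger far-flung Gorenstein constraint from \cite[Corollary 6.2]{herzog2021tiny} (as used in the proof of \Cref{ffg type leq 3}), which requires that the six consecutive integers $2 b_n - 5, \ldots, 2 b_n$ all be expressible as $b_i + b_j$ with $2 \leq i, j \leq n$, where $b_1 = 9 < b_2 < \cdots < b_n$ are the minimal generators of $H$. Since $\operatorname{PF}(H)$ together with the trivial residue class pins down the Apery set $\operatorname{Ap}(H, 9)$ in six of the nine residue classes modulo $9$, and the remaining three Apery elements must be non-maximal under $\leq_H$ in order to preserve $\operatorname{type}(S) = 5$, the possible configurations of minimal generators can be analysed and shown to fall short of the required six consecutive sums, giving the desired contradiction.
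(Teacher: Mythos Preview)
Your reduction down to the single case $e=9$, $A'=\{0,1,3,4\}$ is correct and is essentially the same Rohrbach-type argument the paper uses (the paper phrases it as a two-case split on whether $n_3=2$ or $n_3=3$, then bounds $b$). In fact you are more careful here than the paper: the paper asserts that for $A'=\{0,1,3,b\}$ with $b\in\{2,4,5\}$ the sumset $A'+A'$ always omits $6$ or $7$, but for $b=4$ one has $A'+A'=\{0,1,\ldots,8\}$, so the case $\omega_S=\langle 1,t,t^3,t^4,t^d\rangle$ with $d\geq e=9$ is exactly the one requiring extra work, and you isolate it correctly. Your further reduction to the residue classes $n_5\equiv 2,5,6,7,8\pmod 9$ via the pseudo-Frobenius argument is also valid.

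The genuine gap is your final paragraph: it is a plan, not a proof. You write ``the hardest step will be eliminating these remaining cases'' and then outline an Ap\'ery-set analysis without carrying any of it out. Two concrete problems with the outline itself: first, \cite[Corollary 6.2]{herzog2021tiny} is invoked in \Cref{ffg type leq 3} only under the minimal-multiplicity hypothesis, whereas here $e(S)=9>\type(S)+1=6$, so $S$ is \emph{not} of minimal multiplicity and you have not explained why a ``consecutive sums of generators'' constraint of the stated form (or with the unexplained constant six) should hold. Second, even granting such a constraint, the assertion that ``the possible configurations of minimal generators can be analysed and shown to fall short'' covers five residue classes, each of which requires determining the three non-maximal Ap\'ery elements, extracting the minimal generators, and checking a failure---none of which is written down. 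Until this step is actually executed (or replaced by a different argument), the proof is incomplete.
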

\begin{proof}
        As in the previous proof, choose $\omega_S=\sum\limits_{\alpha\in \operatorname{PF}(H)}t^{c-1-\alpha}$. Since $\type(S)=4$, we have $\omega_S=\langle 1,t,t^a,t^b,t^d\rangle$ where $a<b<d$. Also, it follows that $7\leq e(S)\leq 13$ (\cite[Corollary 5.3]{herzog2021tiny}).  Since $S$ is far flung Gorenstein, by \cite[Proposition 6.1]{herzog2021tiny},
    \begin{align}\label{type 4 non minmal multiplicity eq1}
    \{0,1,2,3,\cdots, e(S)-1\}\subseteq \{(c-1-\alpha) + (c-1-\beta)\mid \alpha, \beta\in \operatorname{PF}(H)\}.
    \end{align}

    Thus we have two cases for the description of  $\omega_S$:
    \begin{enumerate}
        \item $\omega_S=\langle 1,t,t^2,t^b,t^d\rangle$
        \item $\omega_S=\langle 1,t,t^3,t^b,t^d\rangle$
    \end{enumerate}
    Notice that $c-1-d$ is a minimal element in $\PF(H)$, the pseudo Frobenius set. Thus $S$ is of maximal reduced type, if and only if  $c-1-d\geq c-b_1$ or $e(S)=b_1\geq d+1$.

    Suppose $e=e(S)\leq d$. Now suppose case (1) holds. Let $A=\{0,1,2,b,d\}$. Since $A+A$ should  generate consecutive integers $1,\dots,e-1$ and $e\leq d$, we see that $A'+A'$ where $A'=\{0,1,2,b\}$ should generate $\{0,\dots,e-1\}$. Thus our choice should satisfy $b\leq 5$. Our choices for $b$ are $3,4,5$. But with these choices, one can easily see that either $7$ or $8$ is missing from the set $A'+A'$. This is a contradiction.

    Now suppose (2) holds. Let $A=\{0,1,3,b,d\}$. Since $A+A$ should  generate consecutive integers $1,\dots,e-1$ and $e\leq d$, we see that $A'+A'$ where $A'=\{0,1,2,b\}$ should generate $\{0,\dots,e-1\}$. Thus our choice should satisfy $b\leq 5$. Our choices for $b$ are $2,4,5$. But with these choices, one can easily check that either $6$ or $7$ is missing from the set $A'+A'$.  This is a contradiction. 
    
    Thus these contradictions force $e=b_1\geq d+1$.
\end{proof}
The hypothesis of \Cref{ffG rohrbach maxreduced type} is not sharp even when $\type(S)=6$ as the following example shows:
\begin{example}\label{eg3redtypegluing}
    Let $R=\k\ps{t^{12},t^{13},t^{14},t^{15},t^{16},t^{19}}$. Then conductor $\C_R=(t^{24})$. The pseudo Frobenius set can be computed to be $\PF(H)=\{17,18,20,21,22,23\}$. Here the reduced type $s=6$ and also equals the type $r=6$. Notice that the multiplicity $e=b_1=12<n(5)=13$.
\end{example}
Pushing the proof along the same lines, one can hope to ask if such tight bounds for the multiplicity exists for the ring $S$ to have maximal reduced type.
\begin{question}
    If $S$ is far flung Gorenstein complete numerical semigroup with $e(S)\geq \overline{n}(r)-r+1$ where $r=\type(S)$, then $S$ is of maximal reduced type.
\end{question}
So far, all the examples of far-flung Gorenstein rings with non-minimal multiplicity we have tried satisfy $s(R)=\type(R)$. So it is natural to ask the following question.

\begin{question}
If $R$ is far-flung Gorenstein with $e(R)>\edim(R)$, does $R$ have maximal reduced type? 
\end{question}

Beyond $r=\type(R)=5$, one would need a lot more calculations and also knowledge about the valuation semigroup of the far flung Gorenstein ring. In fact even though examples are given to show that there exists far flung Gorenstein rings with multiplicity $e=\overline{n}(r)$, one may need to answer \cite[Question 5.5]{herzog2021tiny} in its entirety to improve \Cref{ffG rohrbach maxreduced type} and answer the above question.







There are rings which are not far-flung Gorenstein but still of maximal reduced type. 

\begin{example}
Let $R=\k\ps{t^4,t^5,t^{11}}$. Here $\cC=(t^8)\overline{R}$. Also, observe that $\operatorname{PF}(H)=\{6,7\}=[8-4,7]\setminus v(R)$. Thus $s(R)=2=\type(R)$. However, $R$ is not far-flung Gorenstein. To see this, observe that $2(c-1)-e(R)+1=11$ cannot be written as a sum of two (not necessarily distinct) terms from $\operatorname{PF}(H)$. Thus, $R$ is not far-flung Gorenstein using \cite[Theorem 6.1 iii]{herzog2021tiny}.
\end{example}



    

\subsection{Gluing of Numerical Semigroups}\label{section on gluing}
\begin{definition}\label{gluing}
     Let $H_1 = \langle a_1, a_2, \ldots , a_n\rangle$ and $H_2 = \langle b_1, b_2, \ldots , b_m\rangle$ be two
numerical semigroups. Take $y \in H_1 \setminus \{a_1, a_2, \ldots , a_n\}$ and $x \in H_2 \setminus \{b_1, b_2, \ldots , b_m\}$
such that $(x, y) = 1$. We say that
$$H = \langle xH_1, yH_2\rangle = \langle xa_1, xa_2, \ldots , xa_n, yb_1, yb_2, \ldots , yb_m\rangle$$
is a gluing of $H_1$ and $H_2$.
\end{definition}


The following results are well-known.

\begin{theorem}\cite[Proposition 6.6]{nari2013symmetries}\label{gluingproperties} Let $x,y,H_1, H_2$ be as in \Cref{gluing} and let $H=\langle xH_1,yH_2\rangle$ be the gluing. Let $S=\k\ps{H}$ and $S_i=\k\ps{H_i}$ be the corresponding semigroup rings with $c_S, c_{S_i}$ denoting the respective conductor valuations. Then the following statements hold.
\begin{enumerate}
  \item  $\ds \PF(H)=\{xf+yg+xy\mid f\in \PF(H_1), g\in \PF(H_2)\}.$
   \item $\ds c_S-1=x(c_{S_1}-1)+y(c_{S_2}-1)+xy.$
   \item $\ds \type(S)=\type(S_1)\type(S_2)$.
\end{enumerate}
    
\end{theorem}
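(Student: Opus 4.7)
The plan is to deduce all three statements from a single structural fact about elements of a gluing. Since $\gcd(x,y)=1$, $y\in H_1$, and $x\in H_2$, every $n\in H$ admits a representation $n = xh_1 + yh_2$ with $h_i\in H_i$, and such a representation is essentially unique once one imposes, say, $0\leq h_2<y$. I would first record this representation lemma (standard in the literature on gluings, e.g.\ \cite{rosales2009numerical}); it is the backbone that translates arithmetic on $H$ into coordinate-wise arithmetic on $H_1\times H_2$.

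To prove (1), I would check both inclusions. For the forward direction, fix $f\in \PF(H_1)$, $g\in \PF(H_2)$, set $n = xf+yg+xy$, and argue $n\notin H$: if $n = xh_1+yh_2$ with $h_i\in H_i$, then the uniqueness of the canonical form together with $\gcd(x,y)=1$ forces $f$ itself to be expressible via generators of $H_1$, contradicting $f\notin H_1$. Then for any nonzero $h=xh_1+yh_2\in H$, write $n+h = x(f+h_1)+y(g+h_2)+xy$ and use the pseudo-Frobenius property of $f$ and $g$ (splitting on whether $h_1$ or $h_2$ is zero) to conclude $n+h\in H$. For the reverse inclusion, given $n\in \PF(H)$, write $n+xy = xh_1+yh_2$ in canonical form, set $f := h_1 - y$ and $g := h_2 - x$, and verify that $f\in \PF(H_1)$ and $g\in \PF(H_2)$ by transferring the property $n+a\in H$ for $a\in H\setminus\{0\}$ to each component via the uniqueness of representations.

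For (2), the Frobenius number equals $\max \PF(H)$, and the map $(f,g)\mapsto xf+yg+xy$ is strictly increasing in each coordinate since $x,y>0$; so the maximum on $\PF(H_1)\times \PF(H_2)$ is attained at $(c_{S_1}-1,c_{S_2}-1)$, yielding $c_S-1 = x(c_{S_1}-1)+y(c_{S_2}-1)+xy$. For (3), since $\type(\k\ps{H}) = |\PF(H)|$ for a complete numerical semigroup ring, it suffices to confirm that the parametrization in (1) is a bijection; injectivity follows because $xf+yg=xf'+yg'$ implies $x\mid (g-g')$ and $y\mid (f-f')$, and the bounds $f,f'\leq c_{S_1}-1<y\cdot$(something manageable) combined with $g,g'\leq c_{S_2}-1$ force $f=f'$ and $g=g'$.

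The main obstacle will be the reverse containment in (1): while writing down $n+xy = xh_1+yh_2$ is easy, showing that the pieces $h_1-y$ and $h_2-x$ genuinely land in $\PF(H_1)$ and $\PF(H_2)$ requires a careful use of the uniqueness of canonical representations together with the specific roles of $x$ and $y$ in the gluing hypothesis. A subtle point is handling generators of the form $xa_i$ and $yb_j$ of $H$, which correspond to either $h_1\ne 0$ with $h_2=0$ or vice versa; these boundary cases must be treated separately to ensure the pseudo-Frobenius property is inherited cleanly by each factor.
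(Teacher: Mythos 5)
The paper offers no proof of this statement: it is quoted directly from Nari's Proposition 6.6 (and ultimately goes back to Delorme and Rosales--Garc\'ia-S\'anchez), so there is no in-paper argument to compare against. Your direct, element-wise strategy is a legitimate alternative to the usual route through Ap\'ery sets (one typically proves $\operatorname{Ap}(H,xy)=x\operatorname{Ap}(H_1,y)+y\operatorname{Ap}(H_2,x)$ by a cardinality count and reads off $\PF$ from the maximal elements), and your skeleton can be completed. The correct backbone, however, is not a ``canonical form with $0\le h_2<y$'': such a normalized representative need not exist inside $H_2$, and the ambiguity in the second coordinate is modulo $x$, not $y$, in any case. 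What you actually need is: $H=xH_1+yH_2$, and any two representations $xh_1+yh_2=xh_1'+yh_2'$ with $h_i,h_i'\in H_i$ satisfy $(h_1',h_2')=(h_1-ky,\,h_2+kx)$ for some $k\in\ZZ$, because $\gcd(x,y)=1$. Every step of your outline should be run as a case analysis on the sign of this $k$, using $y\in H_1$ and $x\in H_2$ to absorb the shifts; already in your forward direction the two branches give the two different contradictions $f\in H_1$ (for $k\ge 1$) and $g\in H_2$ (for $k\le 0$), not just the first.

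Two concrete soft spots. First, the injectivity argument in (3) via the bounds $f,f'\le c_{S_1}-1$ does not work: $y$ is merely a non-generator element of $H_1$ and can be far smaller than the Frobenius number $c_{S_1}-1$ (e.g.\ $H_1=\langle 3,100,101\rangle$ admits $y=6$), so no inequality of the form $|f-f'|<y$ is available. Injectivity instead follows from the pseudo-Frobenius property itself: if $f'=f-ky$ with $k\ge 1$, then $f=f'+ky\in H_1$ since $f'\in\PF(H_1)$ and $0\neq ky\in H_1$, a contradiction; symmetrically $k\le -1$ forces $g\in H_2$. Second, in the reverse inclusion of (1), before transferring the pseudo-Frobenius property you must first record that $f:=h_1-y\notin H_1$ and $g:=h_2-x\notin H_2$; both follow from $n\notin H$ applied to the two adjacent decompositions $n=x(h_1-y)+yh_2$ and $n=xh_1+y(h_2-x)$. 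These two facts are then precisely what eliminates the bad branch of the case analysis when you compare $n+xa=x(f+a+y)+yg$ (for $0\neq a\in H_1$) with a representation of $n+xa$ in $xH_1+yH_2$: the branch that does not yield $f+a\in H_1$ forces $g\in H_2$, which you have excluded. With these repairs the argument closes, and (2) and (3) then follow from (1) exactly as you say.
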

It is natural to expect a similar statement for the reduced type of the glued numerical semigroup ring $S$. This is precisely the content of the next theorem. We first observe that the multiplicity of the glued numerical semigroup ring is given by $$\min\{x{a_1},y{b_1}\}.$$

\begin{theorem}\label{redtypegluing}
    Let $S, S_i$ be as in \Cref{gluingproperties}. Then $$s(S)\leq s(S_1)s(S_2).$$
\end{theorem}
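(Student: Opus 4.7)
The plan is to combine \Cref{mainpropo}, which identifies $s(T)$ for any complete numerical semigroup ring $T=\k\ps{H_T}$ with $|\PF(H_T) \cap [c_T - e(T), c_T - 1]|$, with the structural descriptions in \Cref{gluingproperties}. By parts (1) and (3) of that theorem, the map $\psi(f,g) := xf + yg + xy$ is a surjection from $\PF(H_1) \times \PF(H_2)$ onto $\PF(H)$, and the two sides have equal cardinality, so $\psi$ is a bijection. Consequently $s(S)$ equals the number of pairs $(f,g) \in \PF(H_1)\times\PF(H_2)$ with $\psi(f,g) \geq c_S - e(S)$, where $e(S) = \min(xa_1, yb_1)$ as noted before the theorem.

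Set $A_1 := \PF(H_1) \cap [c_{S_1} - a_1, c_{S_1} - 1]$ and $A_2 := \PF(H_2) \cap [c_{S_2} - b_1, c_{S_2} - 1]$, so that $|A_i| = s(S_i)$. The core of the proof is to show that every counted pair $(f,g)$ actually lies in $A_1 \times A_2$. Assume without loss of generality $e(S) = xa_1 \leq yb_1$ (the other case is symmetric). Substituting the identity $c_S - 1 = x(c_{S_1} - 1) + y(c_{S_2} - 1) + xy$ from \Cref{gluingproperties}(2), the condition $\psi(f,g) \geq c_S - e(S)$ becomes
\begin{equation*}
xf + yg \;\geq\; x(c_{S_1} - a_1 - 1) + y(c_{S_2} - 1) + 1.
\end{equation*}
Using the trivial bound $g \leq c_{S_2} - 1$ (valid for any element of $\PF(H_2)$) yields $xf > x(c_{S_1} - a_1 - 1)$, so $f \geq c_{S_1} - a_1$ and therefore $f \in A_1$. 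Symmetrically, using $f \leq c_{S_1} - 1$ gives $yg \geq y(c_{S_2} - 1) - (xa_1 - 1)$, i.e., $g \geq c_{S_2} - 1 - (xa_1-1)/y$; the case assumption $xa_1 \leq yb_1$ forces $(xa_1 - 1)/y < b_1$, so $g \geq c_{S_2} - b_1$ and $g \in A_2$. This embeds the counted pairs into $A_1 \times A_2$, yielding $s(S) \leq |A_1|\cdot|A_2| = s(S_1)s(S_2)$.

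There is no deep obstacle: once the bijection $\psi$ and the interval characterization of $s$ are in place, the argument is essentially bookkeeping. The only delicate point is passing from the rational estimate on $g$ to the integer bound $g \geq c_{S_2} - b_1$; this is precisely where the case assumption on which of $xa_1, yb_1$ achieves the minimum is used. I would expect the inequality to typically be strict, since membership in $A_1 \times A_2$ does not conversely guarantee the inequality $\psi(f,g) \geq c_S - e(S)$.
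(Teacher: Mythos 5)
Your proof is correct and follows essentially the same route as the paper's: both reduce $s$ to counting pseudo-Frobenius numbers in the window $[c-e,c-1]$ via \Cref{redtypeinterpret}, invoke the gluing formulas for $\PF(H)$ and for $c_S$ from \Cref{gluingproperties}, and show that any pair $(f,g)$ whose image $xf+yg+xy$ lands in that window must itself lie in $A_1\times A_2$. The only difference is cosmetic --- the paper argues by contradiction using $\min\{xa_1,yb_1\}\leq yb_1$ without splitting into cases, whereas you argue directly after a WLOG reduction, with a slightly more explicit treatment of the integer rounding of $g$.
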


\begin{proof}
    Let $\PF(H_1)=\{f_1<f_2<\cdots<f_{\type(S_1)}\}$ and $\PF(H_2)=\{g_1<g_2<\cdots<g_{\type(S_2)}\}$. Observe that $f_{\type(S_1)}=c_{S_1}-1, g_{\type(S_2)}=c_{S_2}-1$. Also, notice that the multiplicity of $S$ is $\min\{x{a_1},y{b_1}\}$.

Let $z\in [c_S-\min\{x{a_1},y{b_1}\}, c_S-1]\setminus v(S)$. By \Cref{redtypeinterpret}, we need to count all such $z$ to determine $s(S)$. Since $z\in \PF(H)$, by \Cref{gluingproperties}(1), we get that $z=xf_i+yg_j+xy$ for some $f_i\in \PF(H_1), g_j\in \PF(H_2)$. 

Suppose $g_j\not \in [c_{S_2}-b_1, c_{S_2}-1]$. Then we can write $g_j=c_{S_2}-b_1-k$ with $k\geq 2$ (since $c_{S_2}-b_1-1\not \in \PF(H_2)$. Thus, we get $$z=xf_i+yg_j+xy=xf_i+y(c_{S_2}-b_1-k)+xy.$$ Since $z\geq c_S-\min\{xa_1,yb_1\}$, using \Cref{gluingproperties}(2), we get that 
\begin{align*}
    xf_i+y(c_{S_2}-b_1-k)+xy & \geq x(c_{S_1}-1)+y(c_{S_2}-1)+xy+1-\min\{xa_1,yb_1\}
\end{align*} which upon simplification gives $$\min\{xa_1,yb_1\}-1\geq x(c_{S_1}-1-f_i)+y(b_1+k-1)\geq y(b_1+k-1)\geq y(b_1+1).$$ This gives $$yb_1-1\geq \min\{xa_1,yb_1\}-1\geq y(b_1+1)$$ but this is a contradiction since $y>a_1\geq 1$.
Hence, $g_j\in [c_{S_2}-b_1, c_{S_2}-1]$. 

A similar argument with $f_i$ shows that $f_i\in [c_{S_1}-a_1, c_{S_1}-1]$. Since this holds for any such $z$, we conclude that $s(S)\leq s(S_1)s(S_2).$
\end{proof}

\begin{corollary}\label{glue of min red type}
    Let $S, S_i$ be as in \Cref{gluingproperties}. If $S_1$ and $S_2$ are of minimal reduced type, then so is $S$. 
\end{corollary}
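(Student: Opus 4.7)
The plan is essentially immediate from the preceding theorem. By hypothesis, $s(S_1) = s(S_2) = 1$, so applying Theorem~\ref{redtypegluing} gives
\[
s(S) \;\leq\; s(S_1)\, s(S_2) \;=\; 1.
\]
On the other hand, the universal inequality $1 \leq s(R) \leq \type(R)$, established in the introduction (following the observation that the conductor $\cC$ is not contained in any proper principal ideal), applies in particular to $R = S$, so $s(S) \geq 1$. Combining the two bounds forces $s(S) = 1$, which is exactly the minimal reduced type condition for $S$.

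The only subtlety worth noting is confirming that the gluing $S = \k\ps{H}$ is itself a valid complete numerical semigroup ring of the kind for which Theorem~\ref{redtypegluing} applies, which is guaranteed by Definition~\ref{gluing} and the setup preceding Theorem~\ref{gluingproperties}. There is no real obstacle; the whole content of the corollary is packaged into the multiplicative bound on $s(S)$ proved just above.
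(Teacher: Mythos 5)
Your argument is correct and is exactly the one the paper intends: the corollary follows immediately from Theorem~\ref{redtypegluing} via $s(S)\leq s(S_1)s(S_2)=1$ together with the universal lower bound $s(S)\geq 1$. The paper's own proof simply says "immediate from the theorem," so your write-up matches it, just spelled out in more detail.
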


\begin{proof}
The proof is immediate from \Cref{redtypegluing}.
\end{proof}

However, the converse of the above statement need not hold, i.e., if $s(S)=1$, then  $S_1, S_2$ simultaneously need not have minimal reduced type. The following example illustrates this.

\begin{example}\label{eg4gluingconverseminredtype}
    Consider $S_1$ to be the ring in \Cref{eg1redtypegluing} and $S_2$ as in \Cref{eg2redtypegluing}. Let $x=10, y=13.$ Then $S=\k\ps{\langle xH_1,yH_2\rangle}=\k\ps{t^{40},t^{65},t^{78},t^{90},t^{91},t^{110},t^{117}}$. Here $s(S_1)s(S_2)=1\times 2=2$. However, $s(S)=1$: the conductor valuation $c_S=10(14)+13(8)+10\times 13+1=375$ using \Cref{gluingproperties}(2). The multiplicity of $S$ is $40$. From \Cref{gluingproperties}(1), \Cref{eg1redtypegluing,eg2redtypegluing}, we can compute the pseudo-Frobenius set:
    $$\PF(H)=\{252, 304, 322, 374\}.$$ Since $322<375-40=335$, we get that $s(R)=1$.
\end{example}
We do have the following statement in the converse direction which is reflected in \Cref{eg4gluingconverseminredtype}.
\begin{theorem}
    Let $S,S_i$ be as in \Cref{gluingproperties}. If $s(S)=1$, then either $s(S_1)=1$ or $s(S_2)=1$.
\end{theorem}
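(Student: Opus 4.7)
The plan is to prove the contrapositive: assuming both $s(S_1) \geq 2$ and $s(S_2) \geq 2$, I would show that $s(S) \geq 2$. The key interpretive tool is \Cref{redtypeinterpret}: $s(S_i)$ counts the integers in $[c_{S_i}-\alpha_i, c_{S_i}-1] \setminus v(S_i)$, where $\alpha_1 = a_1$ and $\alpha_2 = b_1$, and analogously $s(S)$ counts integers in $[c_S - \min\{xa_1, yb_1\}, c_S - 1] \setminus v(S)$. Also, any integer in $[c_{S_i} - \alpha_i, c_{S_i} - 1] \setminus v(S_i)$ automatically lies in $\PF(H_i)$, since adding any nonzero semigroup element pushes it past $c_{S_i}$.

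So, from $s(S_1) \geq 2$ and $s(S_2) \geq 2$, I would extract $f \in \PF(H_1)$ with $c_{S_1} - a_1 \leq f \leq c_{S_1} - 2$ and $g \in \PF(H_2)$ with $c_{S_2} - b_1 \leq g \leq c_{S_2} - 2$. The element $c_S - 1$ always lies in $[c_S - \min\{xa_1, yb_1\}, c_S - 1] \setminus v(S)$ and accounts for the trivial contribution of $1$ to $s(S)$. To produce a second element, I would split into two cases according to which of $xa_1$, $yb_1$ is smaller.

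If $xa_1 \leq yb_1$, set $z := xf + y(c_{S_2}-1) + xy$. By \Cref{gluingproperties}(1), $z \in \PF(H)$, so $z \notin v(S)$. Using \Cref{gluingproperties}(2), a direct computation yields
\[
c_S - 1 - z = x(c_{S_1} - 1 - f) \in [x,\, x(a_1-1)] \subseteq [1,\, xa_1 - 1] = [1,\, \min\{xa_1, yb_1\} - 1],
\]
so $z$ lies in the required interval and is distinct from $c_S-1$. If instead $yb_1 < xa_1$, the symmetric choice $w := x(c_{S_1}-1) + yg + xy$ works: it is in $\PF(H)$, and $c_S - 1 - w = y(c_{S_2} - 1 - g) \in [y,\, yb_1 - y] \subseteq [1,\, yb_1 - 1]$. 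Either way, we exhibit a second element of $[c_S - \min\{xa_1, yb_1\}, c_S - 1] \setminus v(S)$, giving $s(S) \geq 2$.

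The only delicate point, and what forces the case split, is that the residue $c_S - 1 - z$ naturally bounds against $xa_1$ (inherited from $f$) rather than against $\min\{xa_1, yb_1\}$; one cannot get away with just one candidate when $yb_1$ is much smaller than $xa_1$. Choosing the perturbation on the $H_1$-side versus the $H_2$-side according to which generator is smaller handles this cleanly, and all the other verifications follow immediately from the explicit formulas in \Cref{gluingproperties}.
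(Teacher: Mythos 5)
Your proof is correct and follows essentially the same route as the paper: both arguments hinge on the two candidate elements $xf+y(c_{S_2}-1)+xy$ and $x(c_{S_1}-1)+yg+xy$ of $\PF(H)$, use \Cref{gluingproperties}(1),(2) to measure their distance from $c_S-1$, and compare against $\min\{xa_1,yb_1\}$. The paper phrases it as a contradiction (deriving both $xa_1<yb_1$ and $yb_1<xa_1$) while you argue the contrapositive with a case split on which of $xa_1$, $yb_1$ is smaller, but the substance is identical.
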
 

\begin{proof}
    We will prove the statement by contradiction. Assume that $s(S_i)\geq 2$ for $i=1,2$. 
    Let $f_i,g_j$ be chosen as in the proof of \Cref{redtypegluing}. 
    
    Consider $z_1=x(c_{S_1}-1)+yg_{\type(S_2)-1}+xy\in \PF(H)$ (the last part is due to \Cref{gluingproperties}(1)). Since $s(S_2)\geq 2$, we can write $g_{\type(S_2)-1}=c_{S_2}-b_1+k_1$ with $b_1-1> k_1\geq 0$. Hence, $z_1=x(c_{S_1}-1)+y(c_{S_2}-1)+xy+y(1-b_1+k_1)=c_S-1+y(1-b_1+k_1)$, where the last equality follows from \Cref{gluingproperties}(2). Since $s(S)=1$, we have that $\ds c_S-1+y(1-b_1+k_1)\leq c_S-\min\{xa_1,yb_1\}-2$ which upon simplification,  gives $\ds \min\{xa_1,yb_1\}+1\leq y(b_1-1-k_1)\leq y(b_1-1)$. Thus, we must have $xa_1<yb_1$.

    Since $s(S_1)\geq 2$, $f_{\type(S_1)-1}=c_{S_1}-a_1+k_2$ with $0\leq k_2<a_1-1$. Consider $z_2=xf_{\type(S_1)-1}+y(c_{S_2}-1)+xy=x(c_{S_1}-1)+y(c_{S_2}-1)+xy+x(1-a_1+k_2)=c_S-1+x(1-a_1+k_2)\in \PF(H)$. Since $s(S)=1$, we get that $c_S-1+x(1-a_1+k_2)\leq c_S-\min\{xa_1,yb_1\}-2$ which simplifies to $\min\{xa_1,yb_1\}+1\leq x(a_1-1-k_2)\leq x(a_1-1)$. This shows that $yb_1<xa_1$, a contradiction. 

    Thus either $s(S_1)=1$ or $s(S_2)=1$.
\end{proof}

Even if exactly one of $S_1$ or $S_2$ is of minimal reduced type, $s(S)$ need not be one. 
\begin{example}
    Let $S_1$ be the ring in \Cref{eg1redtypegluing} and let $S_2$ be the ring in \Cref{eg3redtypegluing}. Let $x=24, y=13$. Then $S=\k\ps{\langle xH_1,yH_2\rangle}=\k\ps{t^{96}, t^{156}, t^{169}, t^{182}, t^{195}, t^{208}, t^{216}, t^{264}, t^{247}}$. Here, $c_S=24(14)+13(23)+13\times 24+1=948$ whereas the multiplicity is $96$. Moreover, $\PF(H)=\{701, 714, 740, 753, 766, 779, 869, 882, 908, 921, 934, 947\}$. Since there are $6$ numbers greater than $948-96=852$, we get that $s(S)=6$. However, notice that $s(S_1)=1$ as computed in \Cref{eg1redtypegluing}.
\end{example}

Unfortunately, an analogue of \Cref{glue of min red type} for maximal reduced may not be feasible as the following example shows.

\begin{example}
    Let $S_1=\k\ps{t^{5},t^6,t^7,t^9}$ as in \Cref{eg2redtypegluing} and let $S_2=\k\ps{t^3,t^7,t^8}$. Notice that $s(S_2)=2=\type(S_2)$ since $\PF(H_2)=\{4,5\}$. Thus both $S_1$ and $S_2$ are of maximal reduced type. Choose $x=6, y=11$ to get that $S=\k\ps{\langle xH_1,yH_2\rangle}=\k\ps{t^{30},t^{33},t^{36},t^{42},t^{54},t^{77},t^{88}}$. Here, $c_S=170$ and $\PF(H)=\{134, 145, 158, 169\}$. Since $134<170-30$, we get that $s(S)=3$ whereas  $\type(S)=4$. Hence $S$ is not of maximal reduced type. 
\end{example}
However, if the glued ring has maximal reduced type, then both the starting rings must have the same as the following result shows.
\begin{theorem}
    Suppose $S,S_i$ be as in \Cref{gluingproperties}. If $S$ is of maximal reduced type, then both $S_1,S_2$ must have maximal reduced type. 
\end{theorem}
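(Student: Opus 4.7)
The plan is to chain together the two preceding results in this subsection. By \Cref{gluingproperties}(3) we have $\type(S)=\type(S_1)\type(S_2)$, and by \Cref{redtypegluing} we have $s(S)\leq s(S_1)s(S_2)$. Combined with the general inequality $s(S_i)\leq \type(S_i)$ discussed at the very start of Section~2, this gives the chain
\[
\type(S_1)\type(S_2)=\type(S)=s(S)\leq s(S_1)s(S_2)\leq \type(S_1)\type(S_2),
\]
where the second equality is the hypothesis that $S$ is of maximal reduced type.

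Since the left and right sides agree, every inequality is an equality; in particular $s(S_1)s(S_2)=\type(S_1)\type(S_2)$. Because all four quantities are positive integers with $s(S_i)\leq \type(S_i)$, this forces $s(S_i)=\type(S_i)$ for both $i=1,2$, i.e., both $S_1$ and $S_2$ have maximal reduced type.

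There is essentially no obstacle here: the entire statement is extracted from \Cref{redtypegluing} by a one-line arithmetic comparison, so the proof will be only a couple of lines long. The only thing to make sure of is that $\type(S_i)\geq 1$, which is automatic from $S_i$ being a one-dimensional Cohen--Macaulay local ring.
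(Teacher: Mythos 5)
Your proposal is correct and is essentially the same argument as the paper's: both chain $\type(S_1)\type(S_2)=\type(S)=s(S)\leq s(S_1)s(S_2)$ via \Cref{gluingproperties}(3) and \Cref{redtypegluing}, then force $s(S_i)=\type(S_i)$ from $s(S_i)\leq\type(S_i)$. No issues.
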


\begin{proof}
    Using \Cref{gluingproperties}(3) and the assumption that $s(S)=\type(S)$, we get that $$\type(S_1)\type(S_2)=\type(S)=s(S)\leq s(S_1)s(S_2)$$ where the last inequality follows from \Cref{redtypegluing}. Since $1\leq s(S_i)\leq \type(S_i)$ and all are integers, we get that $s(S_i)=\type(S_i)$ must hold for each $i$, thereby finishing the proof. 
\end{proof}
\subsection{Dual of numerical semigroup}\label{section on dual}

In this section, we study the behaviour of $$\End_S(\n):=\Hom_S(\n,\n)$$ where $S=\k\ps{H}$ is a complete numerical semigroup ring and $\n$ is the maximal ideal of $S$. We also make the following notation. We have a surjection
\begin{align*}
    \k\ps{Y_1,\dots,Y_m}\twoheadrightarrow \k\ps{H}
    \end{align*}
which gives us that $S=\k\ps{H}\cong\frac{\k\ps{Y_1,\dots,Y_m}}{J}$ where $J\subseteq (Y_1,\dots,Y_m)^2$. Also, $y_i$ denote the image of $Y_i$ in $S$. Thus we denote $\n=(y_1,\dots,y_m)$. Here notice that $\overline{S}=\overline{R}=\k\ps{t}$.

By identifying with colons, we see that the endomorphism ring is the same as $\ds \n:\n=S:\n$. Clearly, $H\cup \PF(H)\subseteq v(\n:\n)$. As in the proof of \Cref{typecomparison}, notice that
\begin{align*}
    \ell\left(\frac{\n:\n}{S}\right)=\type(S)=|\PF(H)|
\end{align*}
Since $\ell\left(\frac{\n:\n}{S}\right)$ computes the number of valuations $v(\n:\n)$, but not in $H$. Now the cardinality of $H\cup \PF(H)$ and $v(\n:\n)$ are the same. Thus we have the equality $H\cup \PF(H)=v(\n:\n)$.

Let $B=\n:\n$. Notice that the conductor $C_B$ of $B$ has the form $\frac{\cC}{y_1}$ where $\cC$ is the conductor of $S$. For,
\begin{align*}
    \cC_B=B:\overline{S}=(S:\n):\overline{S}=S:\n \overline{S}=S:(y_1)\overline{S}=\frac{1}{y_1}(S:\overline{S})=\frac{1}{y_1}(\cC).
\end{align*}
where $\cC=(t^c)\overline{S}$. Thus the conductor $\cC_B=(t^{c-b_1})\overline{S}$  (see for example, \cite{Barucci-Froberg-memoirs} or \cite[Proposition 3.4]{nari2013symmetries}). The above discussion is precisely the motivation of the discussion that follows.
\begin{definition}
    Let $H=\langle b_1,\dots,b_m\rangle$ be a numerical semigroup. Let $M$ denote the maximal ideal of $H$. Then the dual $H^*$ of $H$ is the numerical semigroup $M-M=\{x\in \mathbb{Z}\setminus\{0\} ~|~x+m\in H \text{ for all } m\in M\}$
\end{definition}

\begin{remark}\label{dualconstruct}
    Notice that $H\subseteq H^*, \PF(H)\subseteq H^*$. Now let $\alpha\in H^*\backslash H$, then $\alpha +m\in H$ for all $m\in M$. By definition $\alpha\in \PF(H)$. Thus $H^*=H\cup \PF(H)$. Thus for a numerical semigroup ring $\k\ps{H}$, we have $H^*=v(\n:\n)=v(B)$. Thus we can set $B=\End_S(\n)=\k\ps{H^*}$.
\end{remark}
Thus it is clear that the multiplicity can drop as we move from $H$ to $H^*$. However, the following proposition gives two conditions which guarantee that the multiplicities of $H$ and $H^*$ stay the same.

\begin{proposition}\label{dualmultiplicitydrop} The following statements hold for a numerical semigroup ring $S=\k\ps{H}$ with  $H=\langle b_1,\cdots, b_m\rangle$ and conductor valuation $c$.
     \begin{enumerate}
     \item Suppose $S$ is of minimal reduced type but not Gorenstein. Then the multiplicity of $H$ and $H^*$ are equal. Moreover, the conductor valuation $c\geq 2b_1+3$.

     \item Suppose $S$ is Gorenstein. Then multiplicity of $H^*$ is strictly less than $b_1$ if and only if $S=\k\ps{t^2,t^3}$.
     \end{enumerate}
\end{proposition}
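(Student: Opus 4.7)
My plan is to derive both assertions in (1) from the single claim $\min \PF(H) \geq b_1 + 1$. By Proposition \ref{minredtypemainprop} and the subsequent Remark, minimal reduced type means every element of $\PF(H) \setminus \{c-1\}$ lies in $[1, c - b_1 - 2]$, equivalently that the integers $c - b_1, c - b_1 + 1, \ldots, c - 2$ all lie in $H$. First I would dispose of the edge case $c \leq b_1$: since $1, \ldots, b_1 - 1$ are always gaps we have $c \geq b_1$, while $c = b_1$ forces $H = \langle b_1, b_1+1, \ldots, 2b_1 - 1\rangle$ with $\PF(H) = \{1, \ldots, b_1 - 1\}$, giving $s(S) = b_1 - 1 = \type(S)$; so in the non-Gorenstein, minimal reduced type setting, $c > b_1$ strictly.

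The core step is to show $\min \PF(H) \geq b_1 + 1$ by contradiction. If $f \in \PF(H)$ with $f \leq b_1 - 1$, then $f \neq c - 1$ since $f < b_1 < c$, and $c - 1 - f \in [c - b_1, c - 2]$, so $c - 1 - f \in H$; moreover $c - 1 - f \geq c - b_1 \geq 1$, so it is a positive element of $H$. The pseudo-Frobenius property of $f$ then gives $c - 1 = f + (c - 1 - f) \in H$, contradicting that $c - 1$ is a gap. Since PF numbers are gaps, they cannot equal $b_1 \in H$, so indeed $\min \PF(H) \geq b_1 + 1$. This makes the multiplicity of $H^* = H \cup \PF(H)$ equal to $\min(b_1, \min \PF(H)) = b_1$; and for any $f \in \PF(H) \setminus \{c - 1\}$ (which exists since $\type(S) \geq 2$) the inequalities $b_1 + 1 \leq f \leq c - b_1 - 2$ yield $c \geq 2 b_1 + 3$.

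For (2), the Gorenstein assumption gives $\PF(H) = \{c - 1\}$, and so $H^* = H \cup \{c - 1\}$ has multiplicity $\min(b_1, c - 1)$. It is less than $b_1$ exactly when $c - 1 < b_1$, which combined with the trivial bound $c \geq b_1$ forces $c = b_1$. As in the analysis above, $c = b_1$ identifies $H$ with $\langle b_1, b_1 + 1, \ldots, 2b_1 - 1\rangle$, and since this has type $b_1 - 1$ the Gorenstein condition further forces $b_1 = 2$, giving $S = \k\ps{t^2, t^3}$. The reverse implication is immediate: for $H = \langle 2, 3 \rangle$ we have $c = 2$ and $H^* = \mathbb{N}$ of multiplicity $1 < 2 = b_1$.

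The only subtle point is the initial case $c = b_1$ in part (1), which must be ruled out so that the quantity $c - 1 - f$ stays positive in the contradiction argument; everything else is straightforward given the reformulation of minimal reduced type.
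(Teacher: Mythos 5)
Your proof is correct and follows essentially the same route as the paper: both parts reduce to showing $\min \PF(H)\geq b_1+1$ from the minimal-reduced-type bound $f\leq c-b_1-2$ on the pseudo-Frobenius numbers other than $c-1$, and your treatment of part (2) (forcing $c=b_1$, hence $H=\langle b_1,\dots,2b_1-1\rangle$ and then $b_1=2$ by the Gorenstein condition) is the paper's argument. The only difference is that where the paper cites Nari's Lemma 6.4 ($c-1-f_{t-1}\leq f_1$), you prove the needed inequality inline via the observation that $f\leq b_1-1$ would put $c-1-f$ in $[c-b_1,c-2]\subseteq H$ and hence force $c-1\in H$; this is a self-contained substitute for the citation, not a different method.
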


\begin{proof}
    For (1), using \cite[Lemma 6.4]{nari2013symmetries}, we observe that $c-1-f_{t-1}\leq f_1$ where $\PF(H)=\{f_1<\cdots<f_t\}$ with $t=\type(S)$. Since $S$ is of minimal reduced type, $f_{t-1}\leq c-b_1-2$ ($c-b_1-1\not\in\PF(H)$). Such an $f_{t-1}$ exists as $t\geq 2$ by assumption. Thus $c-1-f_{t-1}\geq c-1+2+b_1-c\geq b_1+1$. Thus we have
\begin{align*}
    b_1+1\leq c-1-f_{t-1}\leq f_1
\end{align*}
 which proves that the multiplicity of $H$ and $H^*$ are equal. The last statement now follows by using the above established inequalities:  $$c\geq b_1+1+f_{t-1}+1= b_1+2+f_1\geq 2b_1+3.$$

 For $(2)$, first notice that if $R=\k\ps{t^2,t^3}$, then $\PF(H)=\{1\}$ and hence $H^{*}=\langle 1 \rangle$, in particular the multiplicity dropped strictly. Conversely, assume that the multiplicity of $H^*$ is strictly less than $b_1$. Since $R$ is Gorenstein, $\PF(H)=\{c-1\}$. Hence, by \Cref{dualconstruct}, we must have $c-1\leq b_1-1$, i.e., $c\leq b_1$. But $c\in H$ and hence $c=b_1$. Thus, $\cC=\m$ which shows that $S$ has minimal multiplicity by \Cref{multiplicityandconductor}. Moreover, $b_2=b_1+1$ since $\cC=\m$. Since $R$ is Gorenstein, \cite[Fact 2.6]{herzog2021tiny} immediately yields that $b_1=2$. So, $b_2=3$ and $\edim(S)=b_1=2$. This finishes the proof.
\end{proof}

\begin{corollary}
    Let $S=\k\ps{H}$ with $c\geq b_1+1$ and and is of minimal reduced type. Then $e(\k\ps{H^*})=e(S)=b_1$.
\end{corollary}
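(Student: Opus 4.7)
The plan is to reduce the statement to \Cref{dualmultiplicitydrop} by a case split on whether $S$ is Gorenstein. First I would record the key background observation: since $H\subseteq H^*=H\cup \PF(H)$ by \Cref{dualconstruct}, we always have $b_1\in H^*$, so $e(\k\ps{H^*})\leq b_1=e(S)$. It therefore suffices to show that no element of $\PF(H)$ lies strictly below $b_1$, i.e.\ that the dual construction does not introduce a new smaller generator.

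If $S$ is not Gorenstein, then the minimal reduced type hypothesis is nontrivial, and I would simply invoke part (1) of \Cref{dualmultiplicitydrop} to conclude $e(\k\ps{H^*})=e(S)=b_1$. In this case the assumption $c\geq b_1+1$ is actually automatic, since that same part produces the stronger bound $c\geq 2b_1+3$. If $S$ is Gorenstein, then $\PF(H)=\{c-1\}$ and so $H^*=H\cup\{c-1\}$; the hypothesis $c\geq b_1+1$ gives $c-1\geq b_1$, which together with $b_1\in H$ and $c-1\notin H$ yields $c-1>b_1$, so $b_1$ remains the minimum positive element of $H^*$. Equivalently, one can appeal to part (2) of \Cref{dualmultiplicitydrop}: the lone Gorenstein exception $\k\ps{t^2,t^3}$ has $c=b_1=2$, and is precisely the ring excluded by the assumption $c\geq b_1+1$.

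There is no genuine obstacle here; the corollary is a clean repackaging of \Cref{dualmultiplicitydrop}. The only mildly delicate observation is recognizing that the extra hypothesis $c\geq b_1+1$ plays no role whatsoever in the non-Gorenstein case and is imposed solely to rule out the single Gorenstein exception $\k\ps{t^2,t^3}$, after which both branches of the proposition yield the same conclusion $e(\k\ps{H^*})=b_1$.
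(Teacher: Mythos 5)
Your proof is correct and follows essentially the same route as the paper: a case split on whether $S$ is Gorenstein, invoking \Cref{dualmultiplicitydrop}(1) in the non-Gorenstein case and \Cref{dualmultiplicitydrop}(2) (after noting $c\geq b_1+1$ excludes $\k\ps{t^2,t^3}$) in the Gorenstein case. The extra remarks — that $b_1\in H^*$ already forces $e(\k\ps{H^*})\leq b_1$, and that the hypothesis $c\geq b_1+1$ is only needed in the Gorenstein branch — are accurate but not needed beyond what the paper does.
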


\begin{proof}
    If $S$ is not Gorenstein, then the result follows from \Cref{dualmultiplicitydrop}(1). So assume $S$ is Gorenstein. Since $c\geq b_1+1$, $S$ cannot be $\k\ps{t^2,t^3}$ and hence the result follows from \Cref{dualmultiplicitydrop}(2).
\end{proof}

Now a natural question is to study the behaviour of $s(S)$ under the construction of the dual semigroup. The following proposition sheds light upon this study.

\begin{theorem}
    Suppose  $S=\k\ps{H}$. If $\End_S(\n)$ is of minimal reduced type and $e(\End_S(\n))=e(S)$, then $S$ is of minimal reduced type.
\end{theorem}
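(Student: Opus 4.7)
The plan is to argue by contradiction using the explicit interpretation of minimal reduced type established in \Cref{redtypeinterpret} and \Cref{minredtypemainprop}. Recall the setup from \Cref{dualconstruct}: $B = \End_S(\n) = \k\ps{H^*}$ where $H^* = H \cup \PF(H)$, and $B$ has conductor valuation $c_B = c - b_1$. The hypothesis $e(B) = e(S)$ says that the multiplicity of $H^*$ equals $b_1$, i.e.\ $b_1$ is the smallest positive element of $H^*$.

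Unpacking the assumption $s(B) = 1$ via \Cref{redtypeinterpret} (applied to $B$ in place of $S$), the only integer in the interval $[c_B - b_1, \, c_B - 1] = [c - 2b_1, \, c - b_1 - 1]$ that is missing from $v(B) = H^*$ is $c_B - 1 = c - b_1 - 1$. In other words, every integer in $[c - 2b_1,\, c - b_1 - 2]$ belongs to $H^*$. The goal is to derive that every integer in $[c - b_1, \, c - 2]$ belongs to $H$, which by \Cref{redtypeinterpret} is equivalent to $s(S) = 1$.

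Suppose for contradiction that there exists $f \in [c - b_1, \, c - 2]$ with $f \notin H$. First I observe that $f$ must lie in $\PF(H)$: for any $h \in H$ with $h > 0$ we have $h \geq b_1$, so $f + h \geq (c - b_1) + b_1 = c$, which forces $f + h \in H$. In particular $f \in H^*$. Next, consider the shifted element $f - b_1$. Since $f \in [c - b_1, c - 2]$, we have $f - b_1 \in [c - 2b_1, \, c - b_1 - 2]$, and this interval lies entirely in $H^*$ by the assumption $s(B) = 1$.

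The contradiction now comes from the key observation that adding $b_1$ to any element of $H^*$ lands in $H$: if $x \in H$ then $x + b_1 \in H$ by closure, while if $x \in \PF(H)$ then $x + b_1 \in H$ by the very definition of $\PF(H)$, since $b_1 > 0$ lies in $H$. Applying this to $x = f - b_1 \in H^*$ gives $f = (f - b_1) + b_1 \in H$, contradicting $f \notin H$. Hence no such $f$ exists, $s(S) = 1$, and $S$ is of minimal reduced type. The only genuine input, beyond bookkeeping on intervals, is the multiplicity-preservation hypothesis $e(B) = e(S) = b_1$, which is precisely what guarantees that the interval on the $B$-side has the correct length $b_1$ to translate into the interval controlling $s(S)$; without it the shift by $b_1$ would not cover the relevant range, and that is the step I expect would be the most delicate to handle in a generalization.
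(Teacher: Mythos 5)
Your proof is correct and follows essentially the same route as the paper's: translate $s(B)=1$ into the statement that $[c-2b_1,\,c-b_1-2]\subseteq H^*=H\cup\PF(H)$, then shift by $b_1$ and use that adding $b_1$ to an element of $H^*$ always lands in $H$. The only differences are cosmetic — you phrase it as a contradiction and include the (unused) observation that a missing $f$ would lie in $\PF(H)$, whereas the paper argues directly element by element.
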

\begin{proof}
Using \Cref{dualconstruct}, we see that $H^*=v(\End_S(\n))$. Suppose $H^*$ is of minimal reduced type and the multiplicity of $H$ and $H^*$ are equal. From the discussion at the beginning of this subsection, $\C_B=(t^{c-b_1})\overline{S}$.  Thus  $c-b_1-1\in \PF(H^*)$. Since $s(B)=1$,  every element of $[c-2b_1,c-b_1-2]$ is in $H^*=H\cup \PF(H)$. 

Let $c-b_1+k\in[c-b_1,c-2]$. Then $c-2b_1+k\in [c-2b_1,c-b_1-2]\subseteq H^*$. Thus either $c-2b_1+k\in H$ or $c-2b_1+k\in\PF(H)$. If $c-2b_1+k\in H$, then $c-2b_1+k+b_1=c-b_1+k\in H$. If $c-2b_1+k\in \PF(H)$, then $c-2b_1+k+b_1=c-b_1+k\in H$. Thus every member of $[c-b_1,c-2]$ is a member of $H$ and hence $S$ is of minimal reduced type by \Cref{minredtypemainprop}.
\end{proof}

In general, the minimality of reduced type need not stay preserved as the following examples illustrate.
\begin{example}
    Let $R=\k\ps{t^4,t^5,t^6}$. Then $\PF(H)=\{7\}$ which shows that $R$ is Gorenstein (hence $s(R)=1$). However, $B=\k\ps{H^*}=\k\ps{t^4,t^5,t^6,t^7}$ which is of minimal multiplicity (hence $\type(B)=3$) with conductor valuation $4$. Thus $s(B)=3$ by \Cref{minmultcrit}.
\end{example}

\begin{example}
    Let $R=\k\ps{t^{40},t^{65},t^{78},t^{90},t^{91},t^{110},t^{117}}$. From \Cref{eg4gluingconverseminredtype}, we know that $s(R)=1$ and $\PF(H)=\{252, 304, 322, 374\}$ (so $R$ is not Gorenstein). We construct the dual semigroup ring $$B=\k\ps{t^{40},t^{65},t^{78},t^{90},t^{91},t^{110},t^{117},t^{252},t^{304},t^{322},t^{374}}$$ which has conductor valuation $c_B=335$. Moreover, $$\PF(H^*)=\{187, 212, 213, 226, 232, 239, 257, 264, 282, 283, 284, 296, 309, 334\}.$$
    Thus $s(B)=3$.
\end{example}
Unfortunately, an analogue of the previous theorem for maximal reduced type does not hold. The following example establishes this.
\begin{example}
    Let $R=\k\ps{t^5,t^9,t^{11},t^{12}}$. Notice that $c=14$ and $\PF(H)=\{6,7,13\}$. Here $s(R)=1$ and $\type(R)=3$, thus $R$ is not of maximal reduced type. However, $B=\k\ps{t^5,t^6,t^7,t^9}$ which is of maximal reduced type by \Cref{eg2redtypegluing}.
\end{example}

The following examples show that the multiplicity can drop even when we start with a ring of maximal reduced type. Moreover, the maximality of reduced type need not stay preserved. 

\begin{example}\cite[Example 7.14]{dao2023reflexive}
    Let $R=\k\ps{t^4,t^6,t^7,t^9}$ which has $\cC=(t^6,t^7,t^8,t^9)$ and $\PF(H)=\{2,3,5\}$. Thus, $B=\k\ps{H^*}=\k\ps{t^2,t^3}$. Thus multiplicity has dropped. Notice however that both rings have maximal reduced type.
\end{example}

\begin{example}
    Let $R=\k\ps{t^5,t^7,t^9}$. Here $s(R)=2$ with $\PF(H)=\{11,13\}$ and $R$ is of maximal reduced type. Note that $B=\k\ps{H^*}=\k\ps{t^5,t^7,t^9,t^{11},t^{13}}$ is of minimal multiplicity with conductor valuation $9$. Thus, $B$ is not of maximal reduced type by \Cref{minmultcrit}.
\end{example}

\section{The categories $\rf(R)$ and $\cm(R)$}\label{categories}
An $R$-module $M$ is said to be reflexive if the natural map $M\to \Hom_R(\Hom_R(M,R),R)$ is an isomorphism whereas $M$ is Maximal Cohen-Macaulay (MCM) if $\depth(M)=\dim(R)$. Let $\cm(R)$ denote the category of maximal Cohen-Macaulay $R$-modules and let
$\rf(R)$ denote the category of reflexive $R$-modules. We say a category is of \textit{finite
type} if it has only finitely many indecomposable objects up to isomorphism. 

Since we restrict our attention to one dimensional non-regular complete local domains containing an algebraically closed field $\k$ of characteristic $0$, we know that $\ds \{\text{free $R$- modules}\}\subseteq \rf(R)\subseteq \cm(R)$; moreover, $R$ is Gorenstein if and only if $\ds \rf(R)=\cm(R)$ (\cite[Remark 3.1]{dao2023reflexive}).  Thus it follows that if $\cm(R)$ is of finite type, then $\rf(R)$ is also of finite type. 

The study of finiteness of $\cm(R)$ and the corresponding classification question of $R$ has been of interest for quite some time.  For more details on these results along with discussions on the notions of \textit{ADE}-singularities, we refer the reader to \cite[Chapter 4 \S3, Chapters 9, 10]{leuschke2012cohen}. For the convenience of the reader we also list the possible ADE hypersurface singularities as in \cite[\S 3, Page 70,71]{leuschke2012cohen}
\begin{enumerate}
    \item $(A_n) : x^2+y^{n+1}, n\geq 1$
    \item $(D_n) : x^2y+y^{n-1}, n\geq 4$
    \item $(E_6) :  x^3+y^4$
    \item $(E_7) : x^3+xy^3$
    \item $(E_8) : x^3+y^5$
\end{enumerate}

We first discuss Gorenstein rings which are precisely the rings of both maximal and minimal reduced types. The following result is well-known in the literature and provides a complete picture in the Gorenstein setup for all dimensions.
\begin{theorem}\cite[Corollary 10.7]{leuschke2012cohen}\label{ADEresult}  
Let $( R, \m , k )$ be an excellent, Gorenstein ring containing a
field of characteristic different from $2, 3, 5$, and let $K$ be an algebraic closure
of $k$. Assume $d = \dim R \geq  1$ and that $k$ is perfect. Then $\cm(R)$ is of finite 
type if and only if there is a non-zero non-unit $f \in k \ps{ x_0 , \cdots, x_d}$ such that $\widehat{R}\cong k \ps{ x_0 , \cdots, x_d}/(f)$ and $K \ps{ x_0 , \cdots, x_d}/(f)$ is a complete $ADE$ hypersurface singularity.
 \end{theorem}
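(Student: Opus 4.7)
The plan is to decompose the proof into three familiar reductions: (i) pass from $R$ to its $\m$-adic completion $\widehat R$, (ii) reduce from ``Gorenstein of finite CM type'' to ``hypersurface of finite CM type,'' and (iii) invoke the classification of complete hypersurface singularities of finite CM type over an algebraically closed field, which turns out to be exactly the simple (ADE) singularities.

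For step (i), I would first show that $\cm(R)$ is of finite type if and only if $\cm(\widehat R)$ is of finite type. The nontrivial direction is ascent: given the excellence hypothesis on $R$, results of Wiegand (with input from Elkik's approximation theorem) guarantee that indecomposable MCM $\widehat R$-modules descend, up to isomorphism, to indecomposable MCM $R$-modules; the key ingredients are that excellent local rings have analytically isomorphic henselizations and that passage to completion is faithfully flat with Gorenstein fibers, so $\cm$-type is preserved. The descent direction is easier: tensoring up with $\widehat R$ preserves indecomposability of MCM modules and the list is finite by hypothesis. This reduction lets me assume $R = \widehat R$ is complete, Gorenstein, of dimension $d\geq 1$, with perfect residue field $k$.

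For step (ii), I would invoke Herzog's theorem \emph{(a complete local Gorenstein ring of finite CM type is a hypersurface)}. The idea is to use Auslander's theorem that finite CM type implies isolated singularity, so on the punctured spectrum $R$ is regular; combined with the Gorenstein property and the boundedness of Betti numbers of syzygies of MCM modules (Eisenbud's matrix factorization theorem shows MCM modules over a hypersurface have periodic resolutions of period at most 2), one forces the deviation $\varepsilon_2(R)$ to vanish, hence $\widehat R$ is a hypersurface $k\ps{x_0,\dots,x_d}/(f)$. Up to this point, finite CM type for a complete Gorenstein $R$ is equivalent to $R$ being a hypersurface of finite CM type.

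For step (iii), I would combine Kn\"orrer's periodicity theorem with the Buchweitz--Greuel--Schreyer classification. Kn\"orrer's theorem says that $k\ps{x_0,\dots,x_d}/(f)$ has finite CM type if and only if $k\ps{x_0,x_1,x_2,\dots,x_d}/(f+x_{d+1}^2+x_{d+2}^2)$ does, reducing everything to dimensions $1$ and $2$. In dimension $1$ and $2$ over an algebraically closed field of characteristic $0$ (or, with the hypothesis on the characteristic, over any algebraically closed field of characteristic different from $2,3,5$), the classification of Artin/Brieskorn/Greuel--Kn\"orrer/Buchweitz--Greuel--Schreyer identifies finite CM type with the ADE list $(A_n),(D_n),(E_6),(E_7),(E_8)$. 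The final subtlety is handling the descent from $K\ps{x_0,\dots,x_d}$ to $k\ps{x_0,\dots,x_d}$: since $k$ is perfect and $K/k$ is faithfully flat, finiteness is detected after base change to $K$, so the statement is about $K\ps{x_0,\dots,x_d}/(f)$ being ADE rather than the original $R$.

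The hard part will be step (iii): the original Buchweitz--Greuel--Schreyer classification requires constructing enough MCM modules from matrix factorizations to rule out everything outside the ADE list, and this is where the characteristic restriction ``different from $2,3,5$'' enters (to guarantee that normal forms for ADE are available and that the relevant infinite families of indecomposables can be exhibited for non-ADE singularities). Once this classification is granted, assembling the theorem is essentially bookkeeping.
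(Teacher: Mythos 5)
This statement is not proved in the paper at all: it is imported verbatim, with citation, as \cite[Corollary 10.7]{leuschke2012cohen}, so there is no in-paper argument to compare yours against. Your three-step outline is, however, exactly the architecture of the proof in that reference: ascent/descent of finite CM type along $R\to\widehat R$ using excellence (Wiegand's ascent theorem, where the completion direction is indeed the hard one), Herzog's theorem that a complete Gorenstein local ring of finite CM type is a hypersurface, and then Kn\"orrer periodicity to reduce to dimensions one and two, where the Greuel--Kn\"orrer and Buchweitz--Greuel--Schreyer classifications identify finite CM type with the ADE list; the characteristic hypothesis enters where you say it does. One small inaccuracy: in step (ii) you say finite CM type ``forces the deviation $\varepsilon_2(R)$ to vanish, hence $\widehat R$ is a hypersurface.'' Vanishing of $\varepsilon_2$ would make $R$ regular; the hypersurface conclusion corresponds to $\varepsilon_2(R)\leq 1$ (equivalently, bounded Betti numbers of the residue field together with Gulliksen-type results force the defining ideal to be principal). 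Also note that the sentence invoking Eisenbud's matrix factorization theorem is stated in a way that presupposes the hypersurface conclusion; in Herzog's actual argument the periodicity of syzygies of MCM modules is derived from the finiteness of the list of indecomposables, not from matrix factorizations. Neither point affects the overall correctness of the outline as a reconstruction of the cited proof.
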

Note that in our notations, we have $d=1$, $\k=k=K$ and $\widehat{R}=R$, where $\widehat{R}$ denotes the $\m$-adic completion of $R$. Thus, due to \Cref{ADEresult}, it is enough to focus on non-Gorenstein rings. 

Here, in order to study $\cm(R)$, we need the following results. 
\begin{theorem}\cite[Theorem 4.10]{leuschke2012cohen}\label{cmfinite} Let $( R, \m , k )$ be an
analytically unramified local ring of
dimension one. Then $\cm(R)$ is of finite type if and only if $e(R)\leq 3$ and $\mu(\frac{\m \overline{R}+R}{R})\leq 1$.
\end{theorem}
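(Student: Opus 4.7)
The plan is to prove each direction separately, exploiting the fact that MCM modules over a one-dimensional analytically unramified local ring can be identified with $R$-lattices, i.e., full $R$-submodules of $\overline{R}^n$. Since $R$ is analytically unramified in dimension one, $\overline{R}$ is a finite $R$-module and a semilocal principal ideal ring; every torsion-free finitely generated $R$-module $M$ has $M \otimes_R \overline{R}$ projective, so the classification of indecomposable MCM modules reduces to a lattice problem over $\overline{R}$ equipped with the $R$-action. The numerical invariants $e(R)$ and $\mu\big((\m\overline{R}+R)/R\big)$ control, respectively, the number of maximal ideals (and residue field extensions) of $\overline{R}$ and the ``thickness'' of the principal birational extension $B := \m\overline{R}+R$ of $R$.

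For the necessity direction, I would assume $\cm(R)$ has only finitely many indecomposables and derive both bounds by exhibiting unbounded families when either fails. If $e(R) \geq 4$, I would construct pairwise non-isomorphic indecomposable ideals (or lattices), for instance from parametrized families of ideals lying between $\m^n$ and $R$, or direct sums obstructed from splitting by conductor relations; indecomposability is checked by showing $\operatorname{End}_R(M)$ is local, and non-isomorphism by discrete invariants such as conductor valuations or colengths over $\overline{R}$. A parallel argument shows $\mu\big((\m\overline{R}+R)/R\big) \geq 2$ creates an additional parameter that yields infinitely many non-isomorphic indecomposable lattices, since the extra generator provides a continuous family of gluings along the conductor.

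For the sufficiency direction, the strategy is to reduce the lattice classification to a matrix problem with only finitely many indecomposables. The condition $\mu\big((\m\overline{R}+R)/R\big) \leq 1$ forces either $B = R$ or $B/R \cong R/\m$, which pins down the overring tower $R \subseteq B \subseteq \overline{R}$ up to very restricted data. Combined with $e(R) \leq 3$, which bounds the rank of any torsion-free $R$-module that can appear indecomposably (via bounds on the generic rank of lattices controlled by multiplicity), the lattice problem becomes a bimodule/matrix problem over the conductor square $R \hookrightarrow B \hookrightarrow \overline{R}$, where the ``test'' vector space has dimension at most $3$. One then invokes a Drozd--Roiter / Nazarova--Roiter style criterion to conclude that only finitely many indecomposables occur.

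The main obstacle will be the sufficiency direction: converting the two ring-theoretic inequalities into the combinatorial finiteness of the associated matrix problem is delicate, and requires either an explicit case-by-case classification (splitting by $e(R) \in \{1,2,3\}$ and whether $B = R$ or $B \supsetneq R$) or a direct appeal to representation-theoretic machinery. The necessity direction is constructive but still requires careful endomorphism-ring computations to distinguish the infinite families; the sufficiency direction is where the heart of the argument lies.
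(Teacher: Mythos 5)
First, a framing point: the paper does not prove this statement at all --- it is quoted from \cite[Theorem 4.10]{leuschke2012cohen} (the Drozd--Roiter / Green--Reiner / Wiegand characterization of finite CM type in dimension one) and used as a black box throughout Section 5. So there is no in-paper argument to compare yours against; the only meaningful comparison is with the proof in the cited book.

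Measured against that, your outline correctly identifies the architecture of the known proof, but as written it is a roadmap rather than a proof: every substantive step is deferred. For necessity, the actual content is the explicit construction, when $e(R)\geq 4$ or $\mu\big((\m\overline{R}+R)/R\big)\geq 2$, of infinitely many pairwise non-isomorphic indecomposable lattices; this occupies an entire chapter of \cite{leuschke2012cohen}, requires concrete families (not just the assertion that ``parametrized families of ideals'' exist) together with endomorphism-ring computations, and none of it is supplied. For sufficiency, two corrections to the sketch: the reduction is not to the tower $R\subseteq \m\overline{R}+R\subseteq\overline{R}$ but to the Artinian pair $R/\mathfrak{c}\hookrightarrow\overline{R}/\mathfrak{c}$, where $\mathfrak{c}$ is the conductor, over which torsion-free $R$-modules correspond to modules over the pair; and one cannot ``invoke a Drozd--Roiter criterion'' to finish, since that criterion \emph{is} the theorem being proved --- finiteness under the two numerical conditions is established by an explicit and lengthy classification of the indecomposable modules over such Artinian pairs. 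Likewise, your claim that $e(R)\leq 3$ bounds the rank of an indecomposable MCM module is not immediate; bounding the ranks (and then passing from bounded to finite type) is itself a nontrivial part of the argument. In short: the strategy you describe is the right one and matches the source, but essentially all of the proof remains to be carried out.
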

\begin{remark}
    So, $\cm(R)$ is of finite type for $R$ only if $e(R)\leq 3$ which in turn shows that either $R$ is of minimal reduced type or of maximal reduced type. 
\end{remark}

\begin{theorem}\cite[Propositions 6.12, 7.7]{dao2023reflexive}\label{finiterefalmostgor}
Let $(R, \m)$ be a one-dimensional Cohen-Macaulay local ring.
Let $B = \End_R(\m)$. If $\cm(B)$ is of finite type, then $\rf(R)$ is of finite type. Moreover, if $R$ is almost Gorenstein, then the converse also holds.
    
\end{theorem}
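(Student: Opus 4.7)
The plan is to use the ring extension $R \subseteq B = \End_R(\m) = \m :_{\Frac(R)} \m$, already featured in the discussion of dual semigroups in this paper, as a bridge: show that reflexive $R$-modules are naturally MCM $B$-modules, and that under the almost Gorenstein hypothesis the identification $\rf(R) \leftrightarrow \cm(B)$ is tight enough to transfer finiteness in both directions.

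For the forward direction (``$\cm(B)$ of finite type $\Rightarrow$ $\rf(R)$ of finite type''), the first step is to establish that every reflexive $R$-module $M$ carries a canonical $B$-module structure extending its $R$-action. Since $M$ is reflexive over a one-dimensional Cohen--Macaulay domain, $M$ is torsion-free and embeds into $M \otimes_R K$ with $K = \Frac(R)$. Using the identification $M \cong M^{**} = \Hom_R(\Hom_R(M,R),R)$ together with the description $B = R :_K \m$, one checks that for any $b \in B$ and $m \in M$, the element $bm \in M \otimes_R K$ lands in $M^{**} = M$; this comes from the fact that any $\phi \in \Hom_R(M,R)$ remains in $\Hom_R(M,R)$ after multiplication by $b$, because $b \cdot \m \subseteq R$ and the only way the double dual could grow is controlled by such multipliers. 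Since $B$ is one-dimensional and $M$ is torsion-free over $B$, $M \in \cm(B)$. Finiteness of $\cm(B)$ then supplies a finite list of indecomposable MCM $B$-modules $N_1, \dots, N_s$; each is finitely generated over $R$ and, by Krull--Schmidt over the complete local ring $R$, decomposes into finitely many indecomposable $R$-modules. Every indecomposable reflexive $R$-module appears as an $R$-summand of some $N_i$, so $\rf(R)$ has finite type.

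For the converse under the almost Gorenstein hypothesis, I would invoke the structural fact, central to the theory of almost Gorenstein rings, that when $R$ is almost Gorenstein the endomorphism ring $B$ is Gorenstein (consistent with the tight relationship between $\omega_R$, $\m$, and $B$ in the almost Gorenstein setting, and compatible with the numerical identities already exploited via \Cref{nari theorem on almost GOrenstein}). Gorenstein-ness of $B$ gives $\rf(B) = \cm(B)$, so every $N \in \cm(B)$ is $B$-reflexive. A standard adjunction argument, $\Hom_R(-, R) \cong \Hom_B(-, \Hom_R(B, R))$ combined with the identification of $\Hom_R(B, R)$ with a canonical module of $B$, shows that $B$-reflexivity and $R$-reflexivity coincide on finitely generated $B$-modules in this situation. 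Hence $\cm(B)$ embeds into $\rf(R)$ at the level of indecomposables, and finite type of $\rf(R)$ forces finite type of $\cm(B)$.

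The main obstacle is precisely the verification of the equivalence $\rf(R) \simeq \cm(B)$ under the almost Gorenstein hypothesis. The inclusion $\rf(R) \hookrightarrow \cm(B)$ holds in general, but the reverse can fail — there may be MCM $B$-modules that fail to be $R$-reflexive. What almost Gorenstein supplies is the identification of $\Hom_R(B,R)$ with a canonical module of $B$, which is what makes the two duality theories compatible. Pinning down this identification (and thereby the structural statement that $B$ is Gorenstein when $R$ is almost Gorenstein) is the technical heart of the argument.
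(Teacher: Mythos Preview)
First, note that the paper does not supply its own proof of this statement: it is quoted from \cite[Propositions~6.12, 7.7]{dao2023reflexive} and used as a black box. So there is no in-paper argument to compare against, and your task amounts to reconstructing the cited results.

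Your forward direction is on the right track. Reflexive $R$-modules without free summands do acquire a natural $B$-module structure: if $M$ has no free summand then every $\phi\in M^*$ has image in $\m$ (otherwise $\phi$ would split), so for $b\in B=\m:\m$ one has $(b\phi)(m)=b\cdot\phi(m)\in B\m\subseteq \m\subseteq R$, whence $M^*$ is a $B$-module; applying the same reasoning to $M^{**}=M$ finishes it. Together with Krull--Schmidt over the complete local ring this yields the implication.

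Your converse, however, rests on a false premise. You assert as a ``structural fact'' that when $R$ is almost Gorenstein the ring $B=\End_R(\m)$ is Gorenstein. This is not true. Take $R=\k\ps{t^5,t^6,t^7,t^9}$, which is \Cref{eg2redtypegluing} in this very paper: it is almost Gorenstein of type $2$ with $\PF(H)=\{4,8\}$, so $H^*=H\cup\{4,8\}=\langle 4,5,6,7\rangle$ and $B=\k\ps{t^4,t^5,t^6,t^7}$ has minimal multiplicity and $\type(B)=3$; in particular $B$ is not Gorenstein. The implication from \cite[Theorem~5.1]{goto2013almost} that the paper invokes elsewhere runs the \emph{opposite} way (if $B$ is Gorenstein then $R$ is almost Gorenstein), and the converse you need fails. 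Since your entire reverse argument hinges on $\rf(B)=\cm(B)$ via Gorensteinness of $B$, it collapses. The actual mechanism in \cite{dao2023reflexive} is a direct identification of $\rf(R)$ with the MCM $B$-modules under the almost Gorenstein hypothesis, obtained by analyzing the $R$-dual and the canonical module rather than by forcing $B$ to be Gorenstein.
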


\begin{proposition}
Let $R=\k\ps{X_1,\cdots , X_n}/I$ be a one dimensional complete local domain with $k$ an algebraically closed field of characteristic $0$ with $\edim(R)\geq 2$. Let the integral closure $\overline{R}=\k\ps{t}$. If $\cm(R)$ is of finite type, then $\cm(\k\ps{H})$ is of finite type for any numerical semigroup $H$ with $R\subseteq \k\ps{H}\subseteq \overline{R}$. 
\end{proposition}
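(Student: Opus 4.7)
The plan is to apply the Leuschke–Wiegand criterion (\Cref{cmfinite}) to $S := \k\ps{H}$, deducing its hypotheses from those already known for $R$. Since $S$ is a complete local $\k$-algebra of dimension one with normalization $\overline{S} = \overline{R} = \k\ps{t}$ finite over it, it is analytically unramified, so the criterion applies. From $R \subseteq S \subseteq \overline{R}$ one obtains $v(R) \subseteq v(S) = H$; writing $a_1 = \min v(R)$ and $b_1 = \min H$, this gives $b_1 \leq a_1$. By \Cref{cmfinite} applied to $R$, the multiplicity $e(R) = a_1$ is at most $3$, hence $e(S) = b_1 \leq 3$, verifying the multiplicity hypothesis for $S$.

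The next step is to rewrite the second hypothesis in valuation terms. Using $\m_R\overline{R} = (t^{a_1})\overline{R}$, the identification $(t^{a_1})\overline{R}\cap R = \m_R$, and $\m_R\cdot(t^{a_1})\overline{R} = (t^{2a_1})\overline{R}$, Nakayama reduces $\mu_R\bigl(\tfrac{\m_R\overline{R}+R}{R}\bigr)$ to $\ell_R\bigl(\tfrac{(t^{a_1})\overline{R}}{(t^{2a_1})\overline{R}+\m_R}\bigr)$, which by \Cref{herzog-kunz-length} equals
\[
\mu_R\!\left(\frac{\m_R\overline{R}+R}{R}\right) \;=\; \#\{h\in\mathbb{Z} : a_1 < h < 2a_1,\ h\notin v(R)\},
\]
and the analogous identity holds for $S$ with $(a_1,v(R))$ replaced by $(b_1,H)$.

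It then remains to verify, by case analysis on $b_1\in\{1,2,3\}$, that the count for $S$ is at most $1$. When $b_1=1$, $S=\k\ps{t}$ is a DVR and $\cm(S)$ is trivially of finite type. When $b_1=2$, the counting set is contained in $\{3\}$ and so has size at most $1$. When $b_1=3$, the bound $b_1 \leq a_1 \leq 3$ forces $a_1=3$ as well, and the containment $v(R)\subseteq H$ yields $\{4,5\}\setminus H \subseteq \{4,5\}\setminus v(R)$, whose size is at most $1$ by the hypothesis on $R$.

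The main subtlety is the case $b_1 < a_1$: here the two counting intervals $[b_1+1,2b_1-1]$ and $[a_1+1,2a_1-1]$ do not overlap, so the $S$-count cannot be bounded directly by the $R$-count; instead the conclusion rests purely on the smallness of $b_1$ itself. Once both hypotheses are verified, a final application of \Cref{cmfinite} to $S$ gives that $\cm(S)$ is of finite type.
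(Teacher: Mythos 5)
Your proof is correct, but it takes a genuinely different route from the paper's. The paper disposes of this proposition in one line by invoking the general structural fact \cite[4.14(iv)]{leuschke2012cohen}, namely that finite CM type is inherited by any ring between $R$ and $\overline{R}$ inside $\Frac(R)$; no computation is needed. You instead re-verify the Drozd--Roiter conditions of \Cref{cmfinite} for $S=\k\ps{H}$ from scratch, which requires the (correct) valuation-theoretic identity $\mu_R\bigl(\tfrac{\m\overline{R}+R}{R}\bigr)=\#\{h : a_1<h<2a_1,\ h\notin v(R)\}$ and then a case split on $b_1\in\{1,2,3\}$. The paper's citation is shorter and covers arbitrary intermediate rings with no monomial structure; your argument is longer but self-contained modulo \Cref{cmfinite} (which the paper already quotes), and it makes explicit the gap-counting reformulation of the second Drozd--Roiter condition that the paper later re-derives inside the proof of \Cref{fintyperes1}, so your lemma would actually streamline that argument as well. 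Your observation that for $b_1<a_1$ the two counting intervals are disjoint, so the bound for $S$ comes from the smallness of $b_1$ rather than from comparison with $R$, is exactly the right point to isolate. One small step you should make explicit: when applying \Cref{herzog-kunz-length} you need $v\bigl(t^{2a_1}\overline{R}+\m\bigr)$ to equal $(v(R)\setminus\{0\})\cup[2a_1,\infty)$ exactly, and a priori the valuation set of a sum of modules can exceed the union of the valuation sets; here this is harmless because any element $t^{2a_1}f+m$ of valuation below $2a_1$ necessarily has valuation $v(m)\in v(R)$, but the sentence is worth adding.
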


\begin{proof}
    Notice that $R\subseteq \k\ps{H}\subseteq \Frac(R)$ for any such given numerical semigroup $H$. Now the result is immediate from \cite[4.14(iv)]{leuschke2012cohen}.
\end{proof}
Notice that $H$ need not be $v(R)$ in the above proof. For example if $R=\k\ps{t^3+t^4,t^5,t^7}$, then $v(R)=\langle 3,5,7\rangle$ and hence $\k\ps{v(R)}=\k\ps{t^3,t^5,t^7}$, but $R\not\subseteq \k\ps{v(R)}$. But it is however true that $R\subseteq \k\ps{t^3,t^4,t^5}$. Thus, studying $\cm(R)$ for a numerical semigroup ring of maximal or minimal reduced type can help to some extent in understanding $\cm(R)$. The link between the category of reflexive modules however is not so clear unless we have $R$ Gorenstein to begin with. Nevertheless, we restrict our attention to $R=\k\ps{H}$ for the rest of this section.


\begin{theorem}\label{fintyperes1}
    Let $S=\k\ps{H}$ be a numerical semigroup ring which is not Gorenstein. Then $\cm(S)$ is of finite type if and only if one of the following holds:
    \begin{enumerate}
        \item $H=\langle 3,4,5 \rangle$
        \item $H=\langle 3,5,7 \rangle$.
    \end{enumerate}
    \end{theorem}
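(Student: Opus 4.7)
The plan is to apply \Cref{cmfinite} directly. Since $\overline{S} = \k\ps{t}$ and $S \hookrightarrow \overline{S}$ is module-finite, $S$ is analytically unramified, so $\cm(S)$ is of finite type if and only if $e(S) \leq 3$ and $\mu((\n\overline{S} + S)/S) \leq 1$, where $\n$ denotes the maximal ideal of $S$. First, I would rule out $e(S) = 2$: if $b_1 = 2$, then minimality of the generators of $H$ forces $H = \langle 2, b_2 \rangle$ for some odd $b_2$ (any further candidate generator would already lie in $\langle 2, b_2 \rangle$), hence $S \cong \k\ps{x,y}/(y^2 - x^{b_2})$ is a hypersurface and therefore Gorenstein, contradicting the hypothesis. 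Thus $b_1 = e(S) = 3$.

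Next I would translate the condition $\mu((\n\overline{S} + S)/S) \leq 1$ into combinatorics on $H$. Writing $M := (\n\overline{S} + S)/S$ and using $\n\overline{S} = t^3\overline{S}$, the set $\{t^i + S : i \geq 3,\ i \notin H\}$ is a $\k$-basis of $M$. Since $\n \cdot t^3\overline{S} = t^6\overline{S}$, the submodule $\n M$ has $\k$-basis $\{t^i + S : i \geq 6,\ i \notin H\}$. Combining with $3 \in H$ yields
\begin{equation*}
\mu(M) \;=\; \dim_\k(M/\n M) \;=\; |\{4, 5\} \setminus H|.
\end{equation*}

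With this identity in hand, I would split into four cases according to which elements of $\{4, 5\}$ lie in $H$. If both $4, 5 \in H$, then $H \supseteq \{0, 3, 4, 5, 6, \ldots\}$ forces $H = \langle 3, 4, 5 \rangle$. If $4 \in H$ and $5 \notin H$, then $H \supseteq \langle 3, 4 \rangle$ whose only integer gap above $4$ is $5$, so $H = \langle 3, 4 \rangle$ (a hypersurface, hence Gorenstein, excluded). If $5 \in H$ and $4 \notin H$, then $H \supseteq \langle 3, 5 \rangle$ whose integer gaps above $5$ consist of only $\{7\}$, yielding either $H = \langle 3, 5 \rangle$ (hypersurface, excluded) or $H = \langle 3, 5, 7 \rangle$. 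Finally, if neither $4$ nor $5$ lies in $H$, then $\mu(M) = 2$, ruled out by \Cref{cmfinite}. The converse is immediate: both $\k\ps{t^3, t^4, t^5}$ and $\k\ps{t^3, t^5, t^7}$ satisfy $e(S) = 3$ and $\mu(M) \leq 1$ by the count above, and each has type $2$ (so is non-Gorenstein), giving finite Cohen-Macaulay type by \Cref{cmfinite}.

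The heart of the argument is this enumeration, and the main subtlety is verifying that the two discarded candidates $\langle 3, 4 \rangle$ and $\langle 3, 5 \rangle$ really are Gorenstein. This is handled by observing that in each case $\edim(S) = 2$ with $S$ a one-dimensional Cohen-Macaulay domain, so $S$ is a hypersurface and hence Gorenstein; equivalently, a two-generator numerical semigroup is always symmetric.
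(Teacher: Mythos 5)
Your proof is correct and follows essentially the same route as the paper: both reduce to \Cref{cmfinite}, show the multiplicity must be $3$, and then observe that the minimal number of generators of $(\n\overline{S}+S)/S$ counts exactly the elements of $\{4,5\}$ missing from $H$, forcing $H=\langle 3,4,5\rangle$ or $\langle 3,5,7\rangle$. The only cosmetic difference is that you exclude $e(S)=2$ by noting two-generated semigroups are symmetric, while the paper uses $e(S)\geq \type(S)+1\geq 3$ for non-Gorenstein rings; both are fine.
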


\begin{proof}
We use the same notation as in the beginning of \Cref{section on dual}. Assume $\cm(S)$ is of finite type. Then $e(S)\leq 3$ by \Cref{cmfinite} and since $S$ is not Gorenstein, we get that $\type(S)=2$, i.e., $S$ is of minimal multiplicity. Hence $H=\langle 3, b_2, b_3 \rangle$. Now again by \Cref{cmfinite}, we must have $\mu_S(\n\overline{S}+S/S)\leq 1$. From \Cref{multiplicityandconductor}, we get that $L:=\frac{\n\overline{S}+S}{S}=\frac{\n\overline{S}}{(\n\overline{S}\cap S)}=\frac{y_1\overline{S}}{\n}$. Hence $$1\geq \mu(L)=\ell\Big(\frac{y_1\overline{S}}{\n y_1\overline{S}+\n}\Big)=\ell\Big(\frac{y_1\overline{S}}{y_1^2\overline{S}+\n}\Big).$$ Now $v(y_1\overline{S})=\{3,4,5,\rightarrow\}$ whereas $v(y_1^2\overline{S}+\n)=\{3,b_2,b_3\}\cup\{6,7,8,\rightarrow\}$. Thus the only possibilities of $b_2$ are $4,5$. Suppose $b_2=4$. Notice that the ring $\k\ps{t^3,t^4}$ has the conductor $\C=(t^6)\overline{R}$. Thus any element $t^i,i\geq 6$ can be generated by $t^3,t^4$. Now since $\edim S=3$, we must have $b_3=5$. Similarly, if  $b_2=5$, then $b_3=7$ (the conductor of $\k\ps{t^3,t^5}$ is $(t^8)\overline{S}$). This finishes one direction of the proof. 

Conversely, if $H$ is of the two forms as stated, then the same argument in the above paragraph shows that $L$ is generated by at most one element in both cases. Since $e(S)=3$, we can now apply \Cref{cmfinite}.
\end{proof}

\begin{theorem}
     If $S=\k\ps{H}$ is of minimal reduced type, then $\cm(S)$ is of finite type if and only if $S$ is a complete $ADE$ hypersurface singularity of multiplicity $2$ or $3$.
\end{theorem}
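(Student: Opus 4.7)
The approach is to split on whether $S$ is Gorenstein and use the classifications already established. Since $s(S)=1$ by hypothesis, I will first handle the non-Gorenstein case and rule it out. By \Cref{fintyperes1}, the only non-Gorenstein numerical semigroup rings $\k\ps{H}$ with $\cm(S)$ of finite type are $\k\ps{\langle 3,4,5\rangle}$ and $\k\ps{\langle 3,5,7\rangle}$. Using \Cref{redtypeinterpret}, a direct count in each case shows $s(S)=2$: for $\langle 3,4,5\rangle$ the conductor valuation is $c=3$, so $[c-b_1,c-1]=[0,2]$ misses $\{1,2\}$; for $\langle 3,5,7\rangle$ one has $c=5$ and $[c-b_1,c-1]=[2,4]$ misses $\{2,4\}$. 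Neither is of minimal reduced type, so the hypothesis $s(S)=1$ forces $S$ to be Gorenstein.

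With $S$ Gorenstein, I will invoke \Cref{ADEresult}: $\cm(S)$ is of finite type if and only if $S$ is a complete ADE hypersurface singularity. Since $S=\k\ps{H}$ is a one-dimensional domain hypersurface, $\edim(S)=2$ and so $S=\k\ps{t^{a_1},t^{a_2}}$ with $\gcd(a_1,a_2)=1$, of multiplicity $a_1$. It then remains to identify which ADE singularities from the list in the preamble are irreducible (and hence can actually define such a domain). Over the algebraically closed field $\k$ of characteristic zero, $A_{2m+1}: x^2+y^{2m+2}$ factors as $(x+iy^{m+1})(x-iy^{m+1})$, while $D_n: y(x^2+y^{n-2})$ and $E_7: x(x^2+y^3)$ are manifestly reducible. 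This leaves $A_{2m}$, $E_6$, and $E_8$, corresponding respectively to $\k\ps{t^2,t^{2m+1}}$, $\k\ps{t^3,t^4}$, and $\k\ps{t^3,t^5}$, all of multiplicity $2$ or $3$.

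For the converse, any Gorenstein ring has $\type(S)=1$, so $s(S)=1$ automatically; and each of the rings in the list above is an ADE hypersurface, so \Cref{ADEresult} gives $\cm(S)$ of finite type. The only substantive step is the valuation-counting that eliminates the two non-Gorenstein candidates produced by \Cref{fintyperes1}, and this is routine given \Cref{redtypeinterpret}; the rest is simply bookkeeping against the ADE list, so no genuine obstacle arises.
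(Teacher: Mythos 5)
Your proof is correct and follows essentially the same route as the paper's: use \Cref{fintyperes1} to reduce the non-Gorenstein case to $\langle 3,4,5\rangle$ and $\langle 3,5,7\rangle$, check both have $s=2$ (you count directly via \Cref{redtypeinterpret}, the paper cites \Cref{minmultcrit} — both computations agree), and then apply \Cref{ADEresult} in the Gorenstein case. Your extra step identifying which ADE equations are irreducible (ruling out $A_{2m+1}$, $D_n$, $E_7$ for a domain) is a correct refinement beyond what the statement requires, but not a different method.
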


\begin{proof}
    If $S$ is non-Gorenstein with $\cm(S)$ of finite type, then $S$ is of maximal reduced type using \Cref{minmultcrit} on the two forms of $H$ we obtained in \Cref{fintyperes1}. So, if $S$ is of minimal reduced type with $\cm(S)$ of finite type, then $S$ must be Gorenstein. The conclusion for the last part of the statement now follows from \Cref{ADEresult}.
\end{proof}

\begin{theorem}
    Suppose $S=\k\ps{H}$ be a numerical semigroup ring which is not almost Gorenstein. Let $B=\End_R(\m)$. Assume that $e(S)=e(B)$. Then $\cm(B)$ is of finite type if and only if one of the following holds:
    \begin{enumerate}
        \item $H=\langle 3,7,8 \rangle$
        \item $H=\langle 3, 8, 10 \rangle$.
    \end{enumerate}
    In particular, if $S$ is non-almost Gorenstein of minimal reduced type, then $\cm(B)$ is not of finite type.
\end{theorem}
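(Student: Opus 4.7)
The plan is to combine the classification \Cref{fintyperes1} with the ADE theorem \Cref{ADEresult} to pin down the possible shapes of $H^{*}=v(B)$, and then to pull these back via $H^{*}=H\cup\PF(H)$ to constrain $H$. To begin, I would observe that $\cm(B)$ of finite type forces $e(B)\leq 3$ by \Cref{cmfinite}, whence $e(S)=3$: indeed $e(S)\leq 2$ would force $S$ to be Gorenstein (regular or a hypersurface), contradicting the non-almost-Gorenstein hypothesis. The same reasoning shows $\edim(S)=3$, so $H=\langle 3,b_{2},b_{3}\rangle$ has minimal multiplicity and $\PF(H)=\{b_{2}-3,b_{3}-3\}$ by the formula recalled in the proof of \Cref{minmultcrit}. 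The constraint $e(B)=e(S)=3$ forces $b_{2}-3\notin\{1,2\}$, so $b_{2}\geq 7$ (noting $b_{2}=6$ is excluded as $6\in\langle 3\rangle$).

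Next I would enumerate all numerical semigroups $H^{*}$ of multiplicity $3$ for which $\k\ps{H^{*}}$ has $\cm$ of finite type. In the non-Gorenstein case \Cref{fintyperes1} gives $H^{*}\in\{\langle 3,4,5\rangle,\langle 3,5,7\rangle\}$; in the Gorenstein case $B$ is a plane-curve domain of multiplicity $3$, and among the ADE singularities in \Cref{ADEresult} only the irreducible $E_{6}$ and $E_{8}$ qualify, yielding $B\in\{\k\ps{t^{3},t^{4}},\k\ps{t^{3},t^{5}}\}$, i.e., $H^{*}\in\{\langle 3,4\rangle,\langle 3,5\rangle\}$. For each of these four options for $H^{*}$ I would solve $H\cup\{b_{2}-3,b_{3}-3\}=H^{*}$ under the constraint $b_{2}\geq 7$; the small missing elements of $H^{*}$ pin down $b_{2}$ and $b_{3}$ immediately, yielding the four candidates $H\in\{\langle 3,7,8\rangle,\langle 3,7,11\rangle,\langle 3,8,10\rangle,\langle 3,8,13\rangle\}$. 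Nari's criterion \Cref{nari theorem on almost GOrenstein}, which in the $\type(S)=2$ situation reduces to $b_{3}=2b_{2}-3$, identifies $\langle 3,7,11\rangle$ and $\langle 3,8,13\rangle$ as almost Gorenstein and hence excluded, leaving exactly $\langle 3,7,8\rangle$ and $\langle 3,8,10\rangle$.

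For the converse, direct computation shows $\PF(\langle 3,7,8\rangle)=\{4,5\}$ and $\PF(\langle 3,8,10\rangle)=\{5,7\}$, giving $H^{*}=\langle 3,4,5\rangle$ and $\langle 3,5,7\rangle$ respectively, so $\cm(B)$ is of finite type by \Cref{fintyperes1}, while Nari's criterion confirms $S$ is non-AG and $e(S)=e(B)=3$ in both cases. For the concluding ``in particular'' statement, if $S$ is non-AG of minimal reduced type, then $\type(S)\geq 2$ (else $S$ is Gorenstein, hence AG), so \Cref{dualmultiplicitydrop}(1) supplies $e(S)=e(B)$ for free. Should $\cm(B)$ be of finite type, the main equivalence would force $H$ into one of the two listed forms, but \Cref{minmultminredtype} rules this out: the inequality $b_{n-1}+b_{1}-1<b_{n}$ reads $9<8$ and $10<10$, both false. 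The main obstacle in executing this plan is the case analysis in the second step, specifically the delicate coincidence that the two Gorenstein-$B$ pullbacks are exactly the $S$ excluded by the almost-Gorenstein hypothesis.
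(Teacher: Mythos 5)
Your proof is correct, and it reaches the same two semigroups by the same overall strategy (force $e(S)=\edim(S)=3$, apply \Cref{fintyperes1} to $B$, and pull $H^*=H\cup\PF(H)$ back to $H$), but it replaces the paper's key external input with an elementary case analysis. The paper begins by quoting \cite[Theorem 5.1]{goto2013almost}: since $S$ is not almost Gorenstein, $B=\End_S(\n)$ is not Gorenstein, so only the two non-Gorenstein outputs $\langle 3,4,5\rangle$ and $\langle 3,5,7\rangle$ of \Cref{fintyperes1} can occur as $H^*$, and the pullback immediately yields $\langle 3,7,8\rangle$ and $\langle 3,8,10\rangle$. You do not invoke that theorem; instead you also admit the Gorenstein possibilities for $B$, which by \Cref{ADEresult} (and irreducibility, ruling out $D_n$ and $E_7$) are exactly $\k\ps{t^3,t^4}$ and $\k\ps{t^3,t^5}$, pull those back to $\langle 3,7,11\rangle$ and $\langle 3,8,13\rangle$, and then discard them via Nari's criterion because they are almost Gorenstein. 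The two routes cost about the same; yours is self-contained and, as a byproduct, verifies the Goto--Matsuoka--Phuong implication ``$S$ not almost Gorenstein $\Rightarrow$ $B$ not Gorenstein'' in this narrow setting, while the paper's is shorter at the price of the citation. Your handling of the converse and of the ``in particular'' clause matches the paper's, except that you rule out minimal reduced type for the two surviving semigroups by checking the inequality of \Cref{minmultminredtype} directly, whereas the paper observes via \Cref{minmultcrit} that both are of maximal reduced type $2$, hence not of minimal reduced type; these are equivalent checks.
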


\begin{proof}
Since $S$ is not almost Gorenstein, $B$ is not Gorenstein by \cite[Theorem 5.1]{goto2013almost}. So, by \Cref{fintyperes1}, we get that $B$ is constructed either by $H^*=\langle 3,4,5\rangle $ or by $H^*=\langle 3,5,7 \rangle$. Since $e(S)=3$ and $S$ is not Gorenstein, it follows that $H=\langle 3, b_2, b_3\rangle$. Note that $\PF(H)=\{b_2-3,b_3-3\}$. Since $e(B)=e(S)$, the construction in \Cref{dualconstruct} shows that $b_2-3\geq 4$. Hence $b_2\geq 7$. If $H^*=\langle 3,4,5\rangle$, then $b_2=7$ and $b_3=8$. If $H^*=\langle 3,5,7\rangle$, then $b_2=8$ and $b_3=10$. This finishes one direction of the proof. 

Conversely, given the forms of $H$ as in the statement, it is clear that $B$ is given by either $\langle 3,4,5\rangle$ or $\langle 3,5,7\rangle$. The proof is complete by \Cref{cmfinite}.

For the last part, we use \Cref{dualmultiplicitydrop}(1) along with the observation that both the forms of $H$ given in the statement of the above corollary are of maximal reduced type by \Cref{minmultcrit}.
\end{proof}

\begin{theorem}\label{almostGorensteinminredtyperef}
    Let $S=\k\ps{H}$ be an almost Gorenstein ring of minimal reduced type and $\type(S)\geq 2$. Then $\rf(S)$ is of finite type if and only if  $S$ has one of the following forms:
    \begin{enumerate}
        \item $\k\ps{t^3,t^7,t^{11}}$,
        \item $\k\ps{t^3,t^8,t^{13}}$.
    \end{enumerate}
    
\end{theorem}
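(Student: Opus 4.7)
The plan is to reduce the problem to an ADE classification for the endomorphism ring of the maximal ideal, using almost Gorenstein hypothesis at every step.

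First, since $S$ is almost Gorenstein, \Cref{finiterefalmostgor} reduces the claim to the finiteness of $\cm(B)$, where $B=\End_S(\n)$. By the discussion opening \Cref{section on dual}, $B=\k\ps{H^*}$ with $H^*=H\cup\PF(H)$. Applying \Cref{cmfinite} to $B$ forces $e(B)\leq 3$, and since $\type(S)\geq 2$ (so $S$ is non-Gorenstein), \Cref{dualmultiplicitydrop}(1) gives $e(S)=e(B)\leq 3$. The case $e(S)=2$ would make $S$ a two-generator complete intersection, hence Gorenstein — ruled out. Thus $e(S)=3$, and since $\type(S)+1\leq e(S)$ we obtain $\type(S)=2$, so $S$ has minimal multiplicity and $S=\k\ps{t^3,t^{b_2},t^{b_3}}$ with $\PF(H)=\{b_2-3,b_3-3\}$.

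Next I would exploit that $S$ is pseudo-symmetric (almost Gorenstein of type $2$). By Nari's theorem (as already used inside the proof of \Cref{pseudosymm}), writing $\PF(H)=\{f_1,c-1\}$, one has $2f_1=c-1$, so $b_2=f_1+3$ and $b_3=2f_1+3$. Because $S$ is of minimal reduced type, \Cref{pseudosymm}(2) gives $c>5$, forcing $f_1\geq 4$; and $f_1\notin H$ together with $3\in H$ forces $\gcd(3,f_1)=1$. A direct check using the explicit values of $b_2,b_3$ shows that every element of $\langle 3,f_1\rangle$ lies in $H\cup\{f_1,c-1\}=H^*$, and conversely, so $H^*=\langle 3,f_1\rangle$. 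Hence $B=\k\ps{t^3,t^{f_1}}$ is a two-generator numerical semigroup ring, a complete intersection, and in particular Gorenstein.

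Finally I would apply \Cref{ADEresult} to $B$: finiteness of $\cm(B)$ is equivalent to $B$ being an ADE hypersurface singularity. Since $B\cong\k\ps{x,y}/(x^3-y^{f_1})$ (with $\gcd(3,f_1)=1$), none of the $A_n$, $D_n$, or $E_7$ normal forms can occur (each would need an exponent $2$, a monomial $xy^3$, or more variables), leaving only $E_6=(x^3+y^4)$ with $f_1=4$ and $E_8=(x^3+y^5)$ with $f_1=5$. These correspond precisely to $c=9,\ H=\langle 3,7,11\rangle$ and $c=11,\ H=\langle 3,8,13\rangle$. The converse direction is immediate: for these two rings $B$ is respectively $\k\ps{t^3,t^4}$ and $\k\ps{t^3,t^5}$, both on the ADE list, so $\cm(B)$ is of finite type and \Cref{finiterefalmostgor} returns finiteness of $\rf(S)$.

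The main obstacle is the middle step — showing $H^*=\langle 3,f_1\rangle$ so that $B$ becomes two-generated — because this is what collapses the problem onto the short and decisive ADE list; the rest is bookkeeping.
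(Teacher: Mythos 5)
Your proposal is correct and follows essentially the same route as the paper: reduce via \Cref{finiterefalmostgor} to $\cm(B)$ for $B=\End_S(\n)=\k\ps{H^*}$, use \Cref{dualmultiplicitydrop}(1) to pin down $e(S)=3$, $\type(S)=2$, parametrize $H=\langle 3,f_1+3,2f_1+3\rangle$ via the pseudo-symmetric relation, identify $B=\k\ps{t^3,t^{f_1}}$, and invoke the ADE classification to force $f_1\in\{4,5\}$. The only cosmetic differences are that you derive $f_1\geq 4$ from \Cref{pseudosymm}(2) rather than directly from the pseudo-Frobenius inequality for minimal reduced type, and you spell out the verification that $H^*=\langle 3,f_1\rangle$, which the paper leaves implicit.
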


\begin{proof}
    Throughout the proof, we set $B=\End_S(\n)=\k\ps{H^*}$. By \Cref{finiterefalmostgor}, we know that $\rf(S)$ is of finite type if and only if $\cm(B)$ is of finite type. 

    Assume $S $ is of the forms (1) or (2). The type of $S$ is 2 and the pseudo Frobenius set is $\{4,8 \}$ or $\{5,10 \}$ respectively. Thus $B=\k\ps{t^3,t^4}$ or $B=\k\ps{t^3,t^5}$ respectively. These rings have ADE hypersurface singularities and hence $\cm(B)$ is of finite type by \Cref{ADEresult}.


Conversely, assume that $\rf(S)$ is of finite type. So $\cm(B)$ is of finite type and hence $e(B)\leq 3$ by \Cref{ADEresult}(1). Since $s(S)=1$, we get that  $e(S)\leq 3$ by \Cref{dualmultiplicitydrop}(1). Since $\type(S)\geq 2$, it forces $e(S)=3$ and $\type(S)=2$, i.e., $S$ has minimal multiplicity $3$. Hence, $\edim(S)=3$,  Let $H=\langle 3, b_2, b_3\rangle$. We can describe $S$ completely using  \cite[Lemma 4.27]{rosales2009numerical}. We include the proof here for convenience of the reader: since $S$ is almost Gorenstein of minimal multiplicity, we have that $\PF(H)=\{b_2-3,b_3-3\}$ with $b_3-3=2(b_2-3)$. Thus, taking $b_2=h+3$, we get that $b_3=2h+3$. Further, since $b_2$ is not a multiple of $3$, $3$ does not divide $h$. Finally notice that $s(S)=1$ implies that $b_2-3\leq (b_3-3+1)-3-2$. This upon simplification gives $b_3\geq b_2+4$ which in turn implies that $h\geq 4$. Thus $H=\langle 3,h+3,2h+3\rangle$ where $h\geq 4$, $h$ is not divisible by $3$ and $\PF(H)=\{h,2h\}$. So $B=\k\ps{t^3,t^h}$. Since $\cm(B)$ is of finite type, $B$ has $ADE$ hypersurface singularities by \Cref{ADEresult}. Now the only possibilities for the defining equation of $B$ are coming from $(E_6)$ or $(E_8)$ of \cite[\S 3, Page 70,71]{leuschke2012cohen}. Thus $B=\k\ps{t^3,t^4},(h=4)$ or $B=\k\ps{t^3,t^5},(h=5)$. Thus we have $R=\k\ps{t^3,t^7,t^{11}}$ or $R=\k\ps{t^3,t^8,t^{13}}$.
\end{proof}

\begin{question}
    Can we classify all numerical semigroup rings of minimal reduced type which are not almost Gorenstein but $\rf(S)$ is of finite type?
\end{question}

\begin{proposition}\label{minmult3almostgorensteinref}
    Let $S=\k\ps{H}$ be an almost Gorenstein ring of maximal reduced type $2$ and $e(S)=3$. Then $\cm(S)$ (and hence $\rf(S)$) is of finite type. 
\end{proposition}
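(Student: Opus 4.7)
The plan is to show that the three hypotheses force $H$ into one of exactly two numerical semigroups, both of which have already been identified in \Cref{fintyperes1} as the non-Gorenstein cases for which $\cm(S)$ is of finite type.

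First I will note that since $\type(S)=2$ and $e(S)=3=\type(S)+1$, the ring $S$ has minimal multiplicity; hence $\edim(S)=3$ and I may write $H=\langle 3,b_2,b_3\rangle$. As in the proof of \Cref{minmultcrit}, this yields $\PF(H)=\{b_2-3,\,b_3-3\}$ and conductor valuation $c=b_3-2$.

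Next, I will use that almost Gorenstein of type $2$ is pseudo-symmetric, so \Cref{nari theorem on almost GOrenstein} (with $t=2$) forces $b_3-3=2(b_2-3)$, i.e., $b_3=2b_2-3$. Then the maximal reduced type hypothesis, via \Cref{pseudosymm}, gives $c\leq 2b_1-1=5$; combined with $c=2b_2-5$, this forces $b_2\leq 5$. With $b_2>3$ and $3\nmid b_2$ (so that $\{3,b_2,b_3\}$ is a minimal generating set), only $b_2\in\{4,5\}$ remains, producing $H=\langle 3,4,5\rangle$ (with $b_3=5$) or $H=\langle 3,5,7\rangle$ (with $b_3=7$).

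Finally, these are precisely the two semigroups identified in \Cref{fintyperes1}, so $\cm(S)$ is of finite type in each case. The parenthetical claim about $\rf(S)$ is automatic from the inclusion $\rf(S)\subseteq \cm(S)$ recalled at the beginning of this section.

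There is essentially no substantive obstacle here: the hard work has been done in \Cref{minmultcrit}, \Cref{pseudosymm}, and \Cref{fintyperes1}, and the present proposition simply assembles these pieces. The only step requiring any care is verifying the coprimality/non-redundancy condition $3\nmid b_2$ so that $b_2=6$ is excluded, but this is immediate from the requirement that $\{3,b_2,b_3\}$ minimally generate $H$.
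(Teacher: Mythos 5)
Your proof is correct and follows essentially the same route as the paper: minimal multiplicity from $e(S)=\type(S)+1$, the pseudo-symmetric relation $b_3-3=2(b_2-3)$ from almost Gorensteinness, the maximal reduced type bound to force $b_2\in\{4,5\}$, and then \Cref{fintyperes1}. The only cosmetic difference is that you invoke \Cref{pseudosymm} ($c\leq 2b_1-1$) where the paper uses the equivalent criterion $b_2\geq c$ from \Cref{minmultcrit}.
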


\begin{proof}
Since $s(S)=2=\type(R)$ and $e(S)=3$, we get that $S$ is of minimal multiplicity. Since $S$ is almost Gorenstein, the same argument as in the second part of \Cref{almostGorensteinminredtyperef} shows that $H=\langle 3, h+3, 2h+3 \rangle$ where $3$ does not divide $h$. Again, $\PF(H)=\{h,2h\}$ and since $S$ is of maximal reduced type, by \Cref{minmultcrit} we get that $h+3\geq 2h+1$, i.e., $h\leq 2$. Hence, $H=\langle 3,4,5 \rangle$ or $H=\langle 3,5,7\rangle $. So $\cm(S)$ is of finite type in both cases by \Cref{fintyperes1} and consequently, $\rf(S)$ is of finite type.
\end{proof}

\begin{theorem}\label{ref finite e leq 2}
     Let $S$ be a non-Gorenstein almost Gorenstein ring of maximal reduced type. Let $B=\End_S(\n)$ and assume that $e(B)\leq 2$. Then $\rf(S)$ is of finite type if and only if $S$ has one of the following forms.
     \begin{enumerate}
         \item $\k\ps{t^{b_1},t^{b_1+1},\ldots, t^{2b_1-1}}$ with $b_1 \geq 3$,
         \item $\k\ps{t^{b_1},t^{b_1+2},t^{b_1+3},\ldots,t^{2b_1-1},t^{2b_1+1}}$ with $b_1\geq 3$,
     \end{enumerate}
\end{theorem}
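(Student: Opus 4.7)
My plan is to reduce via \Cref{finiterefalmostgor} to the finiteness of $\cm(B)$, then observe that under the hypothesis $e(B)\leq 2$ this finiteness is automatic, so that the theorem collapses to a pure classification of $H$. Since $S$ is almost Gorenstein, \Cref{finiterefalmostgor} gives that $\rf(S)$ is of finite type iff $\cm(B)$ is of finite type. If $e(B)=1$ then $B$ is a DVR and $\cm(B)$ is trivially of finite type. If $e(B)=2$ then $e(B)\geq \type(B)+1$ forces $\type(B)=1$, so $B$ is Gorenstein of minimal multiplicity $2$; hence $B\cong\k\ps{t^2,t^{2n+1}}\cong\k\ps{x,y}/(y^2-x^{2n+1})$ is an $A_{2n}$ hypersurface singularity, to which \Cref{ADEresult} applies to give finite CM type. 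Thus $\rf(S)$ is automatically of finite type under our hypotheses, and the real content of the theorem is pinning down the possible $H$.

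Writing $H^{*}=v(B)=H\cup\PF(H)$ (\Cref{dualconstruct}) and recalling that the conductor of $B$ equals $c-b_1$ (see the start of Section~\ref{section on dual}), I split on $e(B)$. If $e(B)=1$, then $H^{*}=\mathbb{N}$, so every gap of $H$ lies in $\PF(H)$; in particular $[1,b_1-1]\subseteq\PF(H)$. Maximal reduced type (\Cref{mainpropo}) forces $\PF(H)\subseteq[c-b_1,c-1]$, hence $[1,b_1-1]\subseteq[c-b_1,c-1]$ and $c\leq b_1+1$. The case $c=b_1+1$ is impossible since it would make $c-1=b_1$ a gap, so $c=b_1$ and $H=\{0\}\cup[b_1,\infty)$, which is form~(1); non-Gorensteinness forces $b_1\geq 3$.

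If $e(B)=2$, then $B\cong\k\ps{t^2,t^{2n+1}}$ for some $n\geq 1$, so $H^{*}=\langle 2,2n+1\rangle$ and $c-b_1=2n$. The bound $c\leq 2b_1-1$ from \Cref{AlmostGorensteinVal} forces $b_1\geq 2n+1\geq 3$. Since $b_1\geq 3$ we have $2\notin H$, so $2\in H^{*}\setminus H=\PF(H)$; but \Cref{mainpropo} requires $2\geq c-b_1=2n$, forcing $n=1$. Then $c=b_1+2$, $H\cap[1,b_1-1]=\emptyset$, $b_1+1=c-1$ is a gap, and $[b_1+2,\infty)\subseteq H$, so $H=\{0,b_1\}\cup[b_1+2,\infty)$. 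Computing the minimal generating set of this $H$ yields $\{b_1,b_1+2,b_1+3,\dots,2b_1-1,2b_1+1\}$, i.e., form~(2).

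For the converse, a direct check (using \Cref{nari theorem on almost GOrenstein}, \Cref{minmultcrit}, and the computation of $\PF$ and $H^{*}$ as carried out in the proofs of \Cref{almostGorensteinminredtyperef} and \Cref{minmult3almostgorensteinref}) verifies that both forms satisfy the hypotheses and yield $H^{*}=\mathbb{N}$ or $H^{*}=\langle 2,3\rangle$ respectively, so $e(B)\leq 2$ holds. The main obstacle is the $e(B)=2$ analysis: ruling out $n\geq 2$ requires simultaneously exploiting three independent constraints — the conductor identity $c=b_1+2n$, the almost-Gorenstein inequality $c\leq 2b_1-1$, and the placement $\PF(H)\subseteq[c-b_1,c-1]$ coming from maximal reduced type — in order to produce the crisp contradiction $2\geq 2n$.
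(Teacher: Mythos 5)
Your proof is correct, and while it shares the paper's skeleton (reduce to finiteness of $\cm(B)$ via \Cref{finiterefalmostgor}, then split on $e(B)=1$ versus $e(B)=2$), the way you execute the classification is genuinely different. The paper first invokes a theorem of Goto--Matsuoka--Phuong to conclude that $S$ has minimal multiplicity, then uses $\PF(H)=\{b_2-b_1,\dots,b_n-b_1\}$ together with \Cref{minmultcrit} ($b_2\geq c$), and in the case $e(B)=1$ runs a colon-ideal argument ($B=\overline{S}$ iff $\n=\cC$, citing results on $S:(S:\n)$) to land on form (1); in the case $e(B)=2$ it reads off $b_2=b_1+2=c$ from the minimal-multiplicity description of $\PF(H)$. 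You instead work directly with $H^{*}=H\cup\PF(H)$, the conductor shift $c_B=c-b_1$, and the placement constraint $\PF(H)\subseteq[c-b_1,c-1]$ from \Cref{mainpropo}: for $e(B)=1$ the inclusion $[1,b_1-1]\subseteq\PF(H)$ forces $c=b_1$, and for $e(B)=2$ the identification $H^{*}=\langle 2,2n+1\rangle$ plus $2\in\PF(H)$ forces $2\geq c-b_1=2n$, hence $n=1$. This buys you a more self-contained argument: you never need the external minimal-multiplicity input nor the double-colon computation, and you make explicit what the paper leaves implicit, namely that $\cm(B)$ is automatically of finite type when $e(B)\leq 2$ (DVR or $A_{2n}$ singularity), so the biconditional degenerates to a classification in which both sides always hold. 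Two minor remarks: the appeal to \Cref{AlmostGorensteinVal} in your $e(B)=2$ case is superfluous, since $b_1=e(S)\geq\type(S)+1\geq 3$ already gives $2\notin H$; and in the $e(B)=1$ case one can shortcut even further, since $B=\overline{S}$ gives $c-b_1=0$ directly from the conductor relation.
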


\begin{proof}
By \Cref{almostGorensteinminredtyperef,cmfinite}, $e(B)\leq 3$ is necessary. Now the assumption on $e(B)$ implies that $S$ is of minimal multiplicity by \cite[Theorem 5.1]{goto2013almost}. So, if $S=\k\ps{t^{b_1},t^{b_2},\ldots,t^{b_n}}$, then $b_2\geq c$ by \Cref{minmultcrit}. 

It is easy to check that for the given forms of $S$, $\cm(B)$ is of finite type and hence $\rf(S)$ is of finite type. The fact that the given forms are almost Gorenstein of maximal reduced type follows from \cite[Theorem 2.4]{nari2013symmetries} and \Cref{minmultcrit}.

We discuss the converse in separate cases. Since $S$ is non-Gorenstein, $b_1=e(S)\geq 3$.
\begin{enumerate}
        \item Assume $e(B)=1$. So $B=\k\ps{t}$. Since $B=\n:\n=S:\n$, then $B=\overline{S}$ if and only if $S:\n=\overline{S}$ if and only if $S:(S:\n)=S:\overline{S}=\cC$ if and only if $\n=\cC$ where the last part follows from \cite[Proposition 2.7 and Corollary 3.2]{dao2023reflexive}. Thus, $S=\k\ps{t^{b_1},t^{b_1+1},\ldots, t^{2b_1-1}}$.

        \item Assume $e(B)=2$. Since $\PF(H)=\{b_2-b_1,\ldots, b_n-b_1\}$, \Cref{dualconstruct} shows that $b_2=b_1+2=c$. Thus, $S=\k\ps{t^{b_1},t^{b_1+2},t^{b_1+3},\ldots,t^{2b_1-1},t^{2b_1+1}}$.    
\end{enumerate}
\end{proof}

\begin{theorem}\label{ref finite e =3}
Let $S$ be a non-Gorenstein almost Gorenstein ring of maximal reduced type. Let $B=\End_S(\n)$ and assume that $e(B)=3$.
Then $\rf(S)$ is of finite type if and only if $S=\k\ps{t^{b_1},t^{b_1+1},t^{b_1+3},t^{b_1+4},\cdots,t^{2b_1-1}}, b_1\geq 4$

\end{theorem}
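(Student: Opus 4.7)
The plan is to apply \Cref{finiterefalmostgor}, which (as $S$ is almost Gorenstein) reduces the finiteness of $\rf(S)$ to that of $\cm(B)$. Given $e(B)=3$ and $\cm(B)$ of finite type, \Cref{ADEresult,fintyperes1} restrict $B$ to one of four candidates: $\k\ps{t^3,t^4}$ and $\k\ps{t^3,t^5}$ (the Gorenstein $ADE$ hypersurfaces $E_6,E_8$), or $\k\ps{t^3,t^4,t^5}$ and $\k\ps{t^3,t^5,t^7}$ (the non-Gorenstein cases). The goal is then to decide which $S$ can realize each of these as $\End_S(\n)$.

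First I would derive the structural constraints on $S$. Since $e(B)=\min(H^*\setminus\{0\})=\min\{b_1,\min\PF(H)\}=3$, either $b_1=3$ or $b_1\geq 4$ with $\min\PF(H)=3$. The case $b_1=3$ forces $S$ into one of the two rings of \Cref{minmult3almostgorensteinref}; direct computation shows $\min\PF(H)\leq 2$ for each, contradicting $e(B)=3$. So $b_1\geq 4$ and $3\in\PF(H)$. Maximal reduced type gives $c-b_1\leq \min\PF=3$, i.e., $c\leq b_1+3$, while $c-1\notin H\ni b_1$ yields $c\geq b_1+2$. If $c=b_1+2$ then $2\in[c-b_1,c-1]$ is a gap of $H$, forcing $2\in\PF$ by the observation (from the proof of \Cref{mainpropo}) that every gap in $[c-b_1,c-1]$ lies in $\PF$; this contradicts $\min\PF=3$. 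Hence $c=b_1+3$ and $b_1+2=c-1\in\PF$.

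Next I would compute $\PF(H)$ and $H$ explicitly. Every integer in $[3,b_1-1]$ is below $b_1$, hence a gap of $H$ in $[c-b_1,c-1]=[3,b_1+2]$, and thus in $\PF(H)$. The almost Gorenstein pairing $f_i+f_{t-i}=c-1=b_1+2$ from \Cref{nari theorem on almost GOrenstein}, combined with $\min\PF=3$, rules out $b_1+1\in\PF$ (as $b_1+1$ would pair with $1\notin\PF$), so $b_1+1\in H$. This pins down $\PF(H)=\{3,4,\ldots,b_1-1,b_1+2\}$ and $H=\{0,b_1,b_1+1\}\cup[b_1+3,\infty)$, whose minimal generators are exactly $b_1,b_1+1,b_1+3,\ldots,2b_1-1$. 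Computing $H^*=H\cup\PF(H)=\{0\}\cup[3,\infty)$ identifies $B=\k\ps{t^3,t^4,t^5}$ and rules out the other three candidates for $B$.

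For the converse, given $S$ of the asserted form with $b_1\geq 4$, I would verify from its valuation semigroup that $\PF(H)$, $c$, $s(S)$, and $\End_S(\n)$ agree with the computation above; in particular $S$ is non-Gorenstein almost Gorenstein of maximal reduced type with $B=\k\ps{t^3,t^4,t^5}$, so \Cref{fintyperes1} gives $\cm(B)$ of finite type and \Cref{finiterefalmostgor} finishes the proof. I expect the main obstacle to be the careful bookkeeping in the gap/$\PF$ analysis once $c=b_1+3$ is established; beyond this, the result follows cleanly from the preceding theorems.
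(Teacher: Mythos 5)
Your proof is correct, and it reaches the conclusion by a route that differs from the paper's in an interesting way. The paper splits on $\edim(B)\in\{2,3\}$, i.e., on whether $B$ is Gorenstein, invoking results of Goto et al.\ to relate Gorensteinness of $B$ to minimal multiplicity of $S$, and uses \Cref{ADEresult} and \Cref{fintyperes1} to enumerate the possible $B$ before working backwards to $S$; the residual non-almost-Gorenstein candidate $\PF(H)=\{3,\dots,b_1-1,b_1+1,b_1+2\}$ is then killed by Nari's symmetry. You instead work forward entirely inside the semigroup: $e(B)=\min\{b_1,\min\PF(H)\}=3$ together with maximal reduced type and the Nari pairing pins down $H$ directly, and $B=\k\ps{t^3,t^4,t^5}$ only emerges at the end. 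Your argument is self-contained (no appeal to \cite{goto2013almost}) and in fact shows something slightly stronger: the hypotheses of the theorem alone already force $S$ to have the stated form, so the finiteness of $\cm(B)$ is never actually used in the forward direction --- the four-candidate list for $B$ serves only as a consistency check. The one place you are too quick is the inequality $c\geq b_1+2$: the relation $c-1\notin H\ni b_1$ only excludes $c=b_1+1$, and you must separately exclude $c=b_1$ (which is possible a priori since $c\in H$); but $c=b_1$ puts $1$ in $[c-b_1,c-1]\setminus H\subseteq\PF(H)$, contradicting $\min\PF(H)=3$ by exactly the argument you use to exclude $c=b_1+2$. With that line added, the proof is complete.
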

\begin{proof}
  We know $\cm(B)$ is of finite type by \Cref{finiterefalmostgor}.  
Assume $\edim(B)=2$, i.e., $B$ is Gorenstein  but not of minimal multiplicity. Also,  $S$ is of minimal multiplicity by \cite[Theorem 5.4]{goto2013almost}.
        Since $\cm(B)$ is of finite type, the only possibilities of $B$ are $\k\ps{t^3,t^4}$ and $\k\ps{t^3,t^{5}}$ as in the argument of \Cref{almostGorensteinminredtyperef}. If $e(S)=3$, then the proof of \Cref{minmult3almostgorensteinref} shows that $H=\langle 3,4,5 \rangle $ or $\langle 3,5,7\rangle $. But for either of these rings, $e(B)=2$, a contradiction. Hence $e(S)\geq 4$. Since $S$ is of maximal reduced type, $b_2\geq c$. Also, $\PF(H)=\{b_2-b_1,\ldots, b_n-b_1\}$. Since $3\in v(B)$, we must have $b_2=b_1+3$. If $c<b_2$, then $c=b_1$ and hence $b_2=b_1+1$, contradicting our implication that $b_2=b_1+3$. Thus $c=b_2$. Using the fact that $S$ is of minimal multiplicity, we see that $S=\k\ps{t^{b_1},t^{b_1+3},t^{b_1+4},\ldots, t^{2b_1-1},t^{2b_1+1},t^{2b_1+2}}$ with $b_1\geq 4$. However, $\PF(H)=\{3,4,\ldots,b_1-1,b_1+1,b_1+2\}$. This is not almost Gorenstein since $3+b_1+1\neq b_1+2$ by \Cref{nari theorem on almost GOrenstein}.
        


Thus  $e(B)=3=\edim(B)$, i.e., $B$ is of minimal multiplicity. 
         Then by \Cref{fintyperes1}, $B=\k\ps{t^3,t^4,t^5}$ or $B=\k\ps{t^3,t^5,t^7}$. Moreover, $S$ cannot be of minimal multiplicity and hence $b_1=e(S)\geq 4$ (because $\type(S)\geq 2$). Since $t^3\in B$, $3\in \PF(H)$. Thus $b_i=b_1+3$ for some $i\geq 2$.  Since $b_1\geq 4$ and $3\in\PF(H)$, we have $3\geq c-b_1$ as $S$ has maximal reduced type. If $3>c-b_1$, then $c-b_1$ is in the valuation semigroup of $S$ which is a contradiction as $1\leq c-b_1<3<b_1$. Notice that the first inequality is because of the fact that $S$ is not of minimal multiplicity and hence $c>b_1$. Thus $c-b_1=3$ or $c=b_1+3$ and hence $c-1=b_1+2$ is not a member of the valuation semigroup of $S$. Thus the only choice of $S$ with these properties is $S=\k\ps{t^{b_1},t^{b_1+1},t^{b_1+3},t^{b_1+4},\ldots,t^{2b_1-1}}, b_1\geq 4$. Moreover, notice that  $\PF(H)=\{3,4,5, \ldots, b_1-1,b_1+2\}$. Hence $S$ is almost Gorenstein by  \Cref{nari theorem on almost GOrenstein}. In other words $v(B)=\langle 3,4,5\rangle$ or $B=\k\ps{t^3,t^4,t^5}$. Interestingly, notice that $B$ cannot take the form $\k\ps{t^3,t^5,t^7}$ as mentioned in the beginning of this paragraph. 

 Conversely, if $S=\k\ps{t^{b_1},t^{b_1+1},t^{b_1+3},t^{b_1+4},\ldots,t^{2b_1-1}}, b_1\geq 4$, then $v(B)=\langle 3,4,5\rangle$ and hence $B=\k\ps{t^3,t^4,t^5}$. Thus $\cm (B)$ is of finite type (by \Cref{fintyperes1}), equivalently $\rf (S)$ is of finite type.
\end{proof}    

\begin{question}
    Can we classify all numerical semigroup rings of maximal reduced type which are not almost Gorenstein but $\rf(S)$ is of finite type?
\end{question}



    

\bibliographystyle{alpha}
\bibliography{references}
\end{document}